\newcolumntype{C}{>{$}c<{$}}
\newcommand{\obj}[2]{{{}^{#2}K_{#1}}}
\renewcommand{\P}{\mathbb{P}}
\newcommand{\R}{\mathbb{R}}
\newcommand{\C}{\mathbb{C}}
\newcommand{\x}{\vec{x}}
\newcommand{\y}{\vec{y}}
\newcommand{\Z}{\mathbb{Z}}
\newcommand{\mf}{\mathrm{mf}}
\newcommand{\Cone}{\operatorname{Cone}}
\newcommand{\Tw}{\operatorname{Tw}}
\newcommand{\gr}{\operatorname{gr}}
\newcommand{\Hom}{\operatorname{Hom}}
\newcommand{\Ext}{\operatorname{Ext}}
\renewcommand{\hom}{\operatorname{hom}}
\newcommand{\Spec}{\operatorname{Spec}}
\newcommand{\diff}{\mathrm{d}}
\newcommand{\w}{\mathbf{w}}
\newcommand{\HH}{\mathrm{H}}
\newcommand{\wt}{\widecheck{\mathbf{w}}}
\newcommand{\wttilde}{\widecheck{\widetilde{\w}}}
\newcommand{\eps}{\varepsilon}
\newcommand{\xt}{\check{x}}
\newcommand{\yt}{\check{y}}
\newcommand{\fact}{\widecheck{w}}
\newcommand{\vc}[2]{{{}^{#2}V_{#1}}}
\newcommand{\vcpr}[2]{{{}^{#2}V^\mathrm{pr}_{#1}}}
\theoremstyle{plain}
\newtheorem{mthm}{Theorem}
\newtheorem{mconj}{Conjecture}
\newtheorem{thm}{Theorem}[section]
\newtheorem{lem}[thm]{Lemma}
\newtheorem{prop}[thm]{Proposition}
\newtheorem{cor}[thm]{Corollary}
\newtheorem{mcor}{Corollary}
\theoremstyle{remark}
\newtheorem{ex}[thm]{Example}
\newtheorem{rmk}[thm]{Remark}
\definecolor{color1}{HTML}{FF00FF}
\title[Homological B-H-H mirror symmetry for curve singularities]{Homological Berglund--H\"ubsch--Henningson mirror symmetry for curve singularities}
\author{Matthew Habermann}
\email{m.habermann@lancaster.ac.uk}
\address{Department of Mathematics and Statistics,
	University of Lancaster,
	Bailrigg LA1 4YW}
\begin{document}

\begin{abstract}
	In this article, we establish homological Berglund--H\"ubsch mirror symmetry for curve singularities where the A--model incorporates equivariance, otherwise known as homological Berglund--H\"ubsch--Henningson mirror symmetry, including for certain deformations of categories. More precisely, we prove a conjecture of Futaki and Ueda \cite[Conjecture 6.1]{FutakiUedaD} which posits that the equivariance in the A--model can be incorporated by pulling back the superpotential to the total space of the corresponding crepant resolution. Along the way, we show that the B--model category of matrix factorisations has a tilting object whose length is the dimension of the state space of the FJRW A--model, a result which might be of independent interest for its implications in the Landau--Ginzburg analogue of Dubrovin's conjecture.
\end{abstract}
\maketitle
\section{Introduction}
Given a polynomial $f:\C^n\rightarrow \C$ with an isolated singularity at the origin, one can naturally associate two categories to it; one by studying the singularity defined by $f$ symplectically (the A--model), and the other by studying it algebraically (the B--model). The first category is called the Fukaya--Seidel category of $f$, $\mathcal{FS}(f)$, and is a categorification of the space of vanishing cycles. On the algebro-geometric side, one can study the (dg-)category of matrix factorisations, denoted by $\mathrm{mf}(\C^n,f)$. In the case where $f$ is equivariant with respect to the action of an abelian group $G$ which contains $\C^*$, one can also study the $G$-\emph{equivariant} matrix factorisations, which is denoted by $\mathrm{mf}(\C^n, G,f)$. \\

Whilst there is no general prediction about how the Fukaya--Seidel category and the category of matrix factorisations (equivariant or otherwise) of $f$ should be related, homological mirror symmetry predicts that certain singularities should arise in pairs such that the Fukaya--Seidel category of one singularity matches (in a sense to be made precise shortly) the category of matrix factorisations of its partner, and vice-versa. Crucially for us, this pairing of singularities should respect equivariant structures. The main result of this article confirms this prediction in the case of invertible curve singularities, where we incorporate equivariance into the A--model by following a suggestion of Futaki and Ueda in \cite{FutakiUedaD}. \\
To define an invertible polynomial, consider an $n\times n$ matrix $A$, invertible over $\mathbb{Q},$ with non-negative integer entries $a_{ij}$. To such a matrix, one can associate a polynomial $\w\in\C[x_1,\dots, x_n]$
\begin{align}\label{Polydefn}
\w=\sum_{i=1}^n\prod_{j=1}^nx_{j}^{a_{ij}}.
\end{align}
Note that $\w$ is quasi-homogeneous, with weight system given by $(d_0,d_1,\dots,d_n;h)$, where 
\begin{align*}
\w(t^{d_1}x_1,\dots, t^{d_n}x_n)=t^h\w(x_1,\dots,x_n)
\end{align*}
for any $t\in \C^*$, and $d_0:=h-\sum_{i=1}^nd_i$. Whilst $d_i>0$ for $i>0$, there is a trichotomy of cases depending on $d_0$, and we say that $\w$ is log Fano, log Calabi--Yau or log general type if $d_0$ is negative, zero or positive, respectively.\\

In \cite{BerglundHuebsch}, the authors define the transpose of $\w$ by associating to $A^T$ a polynomial $\wt$ and a weight system $(\widecheck{d}_0,\dots,\widecheck{d}_n;\widecheck{h})$; we call this the Berglund--H\"ubsch transpose. We call the polynomial $\w$ \emph{invertible} if it is of the form \eqref{Polydefn} for some $A$ and both $\w$ and $\wt$ define isolated singularities at the origin. \\

Recall that for $f\in\C[x_1,\dots, x_n]$ and $g\in \C[y_1,\dots, y_m]$, their Thom--Sebastiani sum is defined as 
\begin{align}\label{TSSum}
f\boxplus g=f\otimes 1+1\otimes g\in \C[x_1,\dots, x_n,y_1,\dots, y_m].
\end{align} 
A corollary of Kreuzer--Skarke's classification of quasi-homogeneous polynomials, \cite{KreuzerSkarke}, is that any invertible polynomial can be decoupled into the Thom--Sebastiani sum of \textit{atomic} polynomials of the following three types:
\begin{itemize}
	\item Fermat: $\w=x_1^{p_1}$,
	\item Loop: $\w=x_1^{p_1}x_2+x_2^{p_2}x_3+\dots+x_n^{p_n}x_1$,
	\item Chain: $\w=x_1^{p_1}x_2+x_2^{p_2}x_3+\dots+x_n^{p_n}$.
\end{itemize}
The Thom--Sebastiani sums of polynomials of Fermat type are also called Brieskorn--Pham. 
\begin{ex}
	Consider the matrix $A=\begin{pmatrix}3 & 0 & 0 \\
	1 & 3 & 0\\
	0 & 0 & 2
	\end{pmatrix}$. The corresponding polynomial is the $E_7$ singularity, and is the Thom--Sebastiani sum of $x^3+xy^3$ and $z^2$. In fact, ADE singularities in normal form and in all dimensions are examples of invertible polynomials.
\end{ex}

A key piece of data which is associated to an invertible polynomial $\w$ is its \emph{maximal symmetry group}
\begin{align}\label{maxSymGp}
\Gamma_{\w}:=\{(t_1,\dots,t_n,t_{n+1})\in(\C^*)^{n+1}| \ t_1^{a_{1,1}}\dots t_n^{a_{1,n}}=\dots =t_1^{a_{n,1}}\dots t_n^{a_{n,n}}=t_{n+1} \}.
\end{align}
In other words, by setting $\chi_\w$ to be the character given by projection onto $t_{n+1}$, and where $\Gamma_{\w}$ acts on $\C^n$ in the obvious way, $\w$ is $\Gamma_\w$-equivariant of degree $\chi_\w$. Note that $t_{n+1}$ is completely determined by the other $t_i$, and so we consider $\Gamma_\w\subseteq(\C^*)^n$. Since $\w$ is quasi-homogeneous, there is a natural inclusion
\begin{align}\label{groupInclusion}
\begin{split}
\varphi: \C^*&\rightarrow \Gamma_\w\\
t&\mapsto (t^{d_1},\dots,t^{d_n}). 
\end{split}
\end{align}
We call any subgroup of $\Gamma_\w$ containing the image of $\varphi$ \emph{admissible}. In particular, observe that any admissible subgroup is of finite index. The B--model of the homological mirror symmetry prediction we consider will be a pair $(\w,\Gamma)$, where $\Gamma$ is an admissible subgroup. In the case where $\Gamma=\Gamma_{\w}$, we call $\w$ (and the corresponding transpose $\wt$) \emph{maximally graded}, and drop $\Gamma_{\w}$ from the notation. \\

Before moving on, we briefly discuss the better-understood maximally graded case. In this setting, homological Berglund--H\"ubsch mirror symmetry is the following\footnote{All categories here are appropriately derived.}.
\begin{mconj}[\cite{UedaSimple, Takahashi, LekiliUeda}]\label{HomologicalBHHMSConjecture}
Let $\w$ be a maximally graded invertible polynomial and $\wt$ its Berglund--H\"ubsch transpose. Then, there is a quasi-equivalence of $\Z$-graded pre-triangulated $A_\infty$-categories over $\C$
\begin{align*}
\mathrm{mf}(\C^n, \Gamma_\w,\w)\simeq \mathcal{FS}(\wt).
\end{align*}
\end{mconj}
In the above, the B--model is the category of $\Gamma_\w$-equivariant matrix factorisations of $\w$, whilst $\mathcal{FS}(\wt)$ is the Fukaya--Seidel category of $\wt$ defined in \cite{SeidelBook}. The conjecture has been established for Brieskorn--Pham polynomials and Thom--Sebastiani sums of singularities of type $A$ and $D$ in \cite{FutakiUedaBP} and \cite{FutakiUedaD}, respectively, as well as for all invertible curve singularities in \cite{HabermannSmith}. Moreover, strong evidence in the case of chain polynomials was provided in \cite{PolishVarol}, where it was shown that the B--model satisfies a certain recursion relation for directed $A_\infty$-categories, and the corresponding argument on the A--side was sketched in detail. \\
For any admissible group $\Gamma\subseteq\Gamma_\w$, the category $\mathrm{mf}(\C^n, \Gamma,\w)$ is theoretically unproblematic (although it is still difficult to compute in practice), and so it is natural to ask which symplectically defined category should be its mirror, extending \cref{HomologicalBHHMSConjecture} to allow for non-maximally graded symmetry groups. To this end, we introduce the notion of Berglund--Henningson dual group, defined as
\begin{align}\label{DualGpDefn}
\widecheck{\overline{\Gamma}}:=\Hom(\Gamma_{\w}/\Gamma,\C^*).
\end{align} 
By construction, this is a subgroup of $\ker \chi_{\wt}\subseteq \mathrm{SL}(n,\C)$, the group of symmetries with respect to which $\wt$ is \emph{invariant}. This generalisation of invertible polynomials to include different symmetry groups was initially studied in \cite{BergHenn} and further explored in \cite{Krawitz}. By \cite[Proposition 3]{EbelingTakahashi}, the definition given above of $\widecheck{\overline{\Gamma}}$ is a reformulation\footnote{Strictly speaking, the cited result pertains to the closely related maximal group of symmetries which keeps $\w$ invariant, although the resulting quotient, and therefore dual groups, are the same.} of \cite[Definition 15]{Krawitz}. The appropriate A--model should then be an `orbifold Fukaya--Seidel category' associated to the Landau--Ginzburg model $\wt:[\C^n/\widecheck{\overline{\Gamma}}]\rightarrow \C$.  \\
In general, the definition of an orbifold Fukaya--Seidel category is a difficult problem (\cite[Problem 3]{FutakiUedaProceedings}), even in the case of a Lefschetz fibration whose fibres are preserved by the group action, as studied in \cite[Section 13.1]{ChoHong}. More problematic still is, if one begins with a hypersurface singularity, $\wt$, one must, at least in the original formulation of \cite{SeidelBook}, Morsify $\wt$ in order to construct the Fukaya--Seidel category. Whilst Morsifications abound, these will, in general in the case where $\wt$ is equivariant with respect to a group action, not respect this equivariance -- cf. \cref{NoEqMorseEx}. \\
Motivated by extending \cref{HomologicalBHHMSConjecture} to allow for non-maximally graded groups on the B--side,  recent work of Cho, Choa and Jeong in \cite{CCJ} constructs a new $\Z/2$-graded $A_\infty$-category which they take as a definition of the orbifold Fukaya--Seidel category and conjecture that it agrees with the standard definition in the non-equivariant case (\cite[Conjecture 1.2]{CCJ}). This is a robust definition in that it avoids the need to Morsify and is applicable to any polynomial which is not of log general type, including in higher dimensions. On the other hand, all invertible curve singularities, with the exception of $x^2+y^2$, are of log general type, so this work is not immediately applicable to curves; the extension to include curves was recently studied in \cite{CCJ24}. In contrast to these techniques, the main result of this article concerns an alternative approach in two variables suggested by Futaki and Ueda in \cite{FutakiUedaD}.\\

Restricting to the case of curves, Futaki and Ueda suggest a model for the orbifold Fukaya--Seidel category, which is defined as follows. Firstly, observe that $\widecheck{\overline{\Gamma}}\simeq \mu_{\ell}$ for $\ell=[\Gamma_{\w}:\Gamma]$ and that this group acts diagonally on $\C^2$ by $\xi\cdot(\xt,\yt)=(\xi\xt,\xi^{-1}\yt)$. That it is a finite group follows from the admissibility assumption. In the case at hand, it is also immediate that it is an abelian group. In combination with the general fact (for example, in \cite{Krawitz} or \cite[Section 2]{EbelingTakahashi}) that the dual group of an admissible subgroup is in $\mathrm{SL}(n,\C)$ ($n=2$ for us), we can see that the action of the dual group is as above.  Since $\widecheck{\overline{\Gamma}}\subseteq \ker\chi_{\wt}$, we have that $\wt$ descends to a map
\begin{align*}
\widecheck{\overline{\w}}:X\rightarrow \C,
\end{align*}
where $X=\C^2/\mu_\ell$ is the $A_{\ell-1}$ singularity. Then, let 
\begin{align*}
\pi:\widetilde{X}\rightarrow X
\end{align*}
be the crepant resolution of $X$ and define
\begin{align*}
\wttilde:=\pi^*\widecheck{\overline{\w}}:\widetilde{X}\rightarrow \C.
\end{align*}
We will discuss the issue of symplectic form in \cref{CrepResolutionSection}, although note that $\widetilde{X}$ will no longer be exact since the curves comprising the exceptional divisor are holomorphic. The slogan of this approach is that we have traded equivariance for non-exactness. We denote the Fukaya--Seidel category associated to $\wttilde:\widetilde{X}\rightarrow \C$ as $\mathcal{FS}(\wttilde)$, although note that, in addition to the non-exactness, the construction of such a category has several subtleties in comparison to the maximally graded case, to be discussed momentarily.  \\

With notation as above, Futaki--Ueda make the following conjecture: 
\begin{mconj}[{\cite[Conjecture 6.1]{FutakiUedaD}}]\label{FUHomologicalBHHMSConjecture}
	Let $\w$ be an invertible polynomial in two variables with admissible group of symmetries $\Gamma$ and $\wt$ its Berglund--H\"ubsch transpose with dual grading group $\widecheck{\overline{\Gamma}}$. Then, there is a quasi-equivalence of $\Z$-graded pre-triangulated $A_\infty$-categories over $\C$
	\begin{align*}
	\mathrm{mf}(\C^2, \Gamma,\w)\simeq \mathcal{FS}(\wttilde).
	\end{align*}
\end{mconj}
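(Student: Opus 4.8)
The plan is to show that both categories appearing in the statement are quasi-equivalent to a single $\Z$-graded directed $A_\infty$-category $\mathcal{A}$, presented as the perfect derived category of a finite quiver with relations. The vertex set of this quiver will be realised on the B--side as the set of indecomposable summands of a tilting object in $\mathrm{mf}(\C^2,\Gamma,\w)$, and on the A--side as the set of critical points of a well-chosen equivariant Morsification; the substance of the theorem is that these two a priori unrelated combinatorial data give rise to quasi-isomorphic $A_\infty$-algebras. I would therefore split the argument into a B--side half and an A--side half, meeting at $\mathcal{A}$.

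\medskip
\noindent\emph{The B--side.} The first task is to construct a tilting object $T\in\mathrm{mf}(\C^2,\Gamma,\w)$ and to compute its endomorphism $A_\infty$-algebra. For the maximally graded polynomial this is \cite[Theorem 2.1]{BIY}; the idea is to run the same construction with the grading group $L_\w$ replaced by the quotient $L$ attached to the admissible subgroup $\Gamma\subseteq\Gamma_\w$. Concretely, one lets $T$ be a direct sum of $L$-twists of the rank-one matrix factorisations indexed by the ``staircase'' monomials of $\w$, checks that $\Ext^{>0}_{\mathrm{mf}}(T,T)=0$ and that $T$ split-generates (via the $\Gamma$-graded version of Orlov's theorem together with a Koszul resolution of the structure sheaf of the singular point), and concludes $\mathrm{mf}(\C^2,\Gamma,\w)\simeq\perf\bigl(\End_{\mathrm{mf}}(T)\bigr)$. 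One then identifies $\End_{\mathrm{mf}}(T)$, together with a minimal model of its $A_\infty$-structure, with an explicit quiver algebra, and checks --- organising the count along the Kreuzer--Skarke decoupling \cite{KreuzerSkarke} into atomic types --- that the number of indecomposable summands of $T$ equals the dimension of the FJRW state space. This is the numerical matching we need.

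\medskip
\noindent\emph{The A--side.} Here the key input is the observation underlying this note: the resonant Morsification $\wt_\varepsilon$ of \cite{HabermannSmith} is invariant under $\widecheck{\overline{\Gamma}}\cong\mu_\ell$. Consequently $\wt_\varepsilon$ descends to a map $\overline{\w}_\varepsilon\colon X\to\C$ which Morsifies $\widecheck{\overline{\w}}$ away from the origin, and $\wttilde_\varepsilon=\pi^*\overline{\w}_\varepsilon$ on $\widetilde{X}$. Because $\mu_\ell$ acts freely on $\C^2\setminus\{0\}$, the critical points of $\wttilde_\varepsilon$ lying off $\pi^{-1}(0)$ are exactly the free $\mu_\ell$-orbits of critical points of $\wt_\varepsilon$ away from the origin, while near the exceptional chain of $\ell-1$ rational curves a further, explicitly enumerable, set of critical points appears, resolving the residual singularity of $\overline{\w}_\varepsilon$ at $0\in X$. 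I would then choose an ordered distinguished basis of Lefschetz thimbles adapted to this splitting --- pulled-back thimbles over the orbit critical values, and exceptional thimbles near $\pi^{-1}(0)$ --- and compute the directed $A_\infty$-structure of $\mathcal{FS}(\wttilde_\varepsilon)$: the orbit part is dictated by the computation already performed upstairs in \cite{HabermannSmith}, pushed down along the free quotient, and the exceptional part is governed by a fixed local model, namely the minimal resolution of the $A_{\ell-1}$ surface singularity with its standard exceptional collection. Matching the resulting quiver with relations against $\mathcal{A}$ finishes this half.

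\medskip
\noindent\emph{The main obstacle.} I expect the crux to lie in the A--side symplectic geometry of $\widetilde{X}$. Since the exceptional curves are holomorphic, $\widetilde{X}$ is not exact, so one must first fix a symplectic form and a class of admissible Lefschetz fibrations on $\widetilde{X}$ for which $\mathcal{FS}(\wttilde_\varepsilon)$ is well defined and independent of the permissible choices, and then control the holomorphic discs --- in particular those whose boundaries lie on or near the exceptional spheres --- contributing to the Floer products between exceptional thimbles and pulled-back thimbles. A second, more algebraic, difficulty is to match the two halves at the level of $A_\infty$-structures rather than merely of cohomology-level quivers; for this I would exploit a purity statement forced by the extra $\Z$-grading (yielding formality of $\mathcal{A}$ in the relevant cases), or else pin down both minimal models by a rigidity argument using the vanishing of the appropriate Hochschild cohomology groups of the cohomology-level endomorphism algebra.
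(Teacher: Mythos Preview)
Your outline matches the paper's strategy essentially point for point: construct a tilting object on the B--side, present its endomorphism algebra as a quiver with relations, use the equivariance of the resonant Morsification to pull back to $\widetilde{X}$, compute the directed Fukaya--Seidel category from the vanishing cycles, and match the two via formality. A few places where the paper's execution is simpler or slightly different from what you anticipate: the tilting object is not built solely from rank-one factorisations---the summands $\obj{0}{i,j}$ supported at the origin are genuinely higher-rank (rank $k+2$) and require explicit stabilisation; the non-exactness of $\widetilde{X}$ is dealt with not by a bespoke bubbling analysis but by invoking the framework of \cite{AOK}, which applies because the smooth fibre is exact and $\pi_2(\Sigma)=\pi_2(\Sigma,V_i)=0$; and the formality step needs no Hochschild computation---the endomorphism algebra on both sides turns out to be concentrated in cohomological degree zero, hence intrinsically formal.
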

Regarding the definition of $\mathcal{FS}(\wttilde)$, observe that the resonant Morsification $\wt_{\varepsilon}=\wt-\varepsilon xy$ in \cite{HabermannSmith} preserves the $\widecheck{\overline{\Gamma}}$-invariance, and so descends to a perturbation of  $\widecheck{\overline{\w}}_{\varepsilon}:X\rightarrow \C$. We will see later that $\wttilde_{\varepsilon}=\pi^*\widecheck{\overline{\w}}_{\varepsilon}$ has only non-degenerate critical points, and we define $\mathcal{FS}(\wttilde)$ to be the Fukaya--Seidel category associated to $\wttilde_{\varepsilon}$. In \cref{EqMorsificationSection}, we justify this definition. \\
Using this construction of the Fukaya--Seidel category, our main result is a proof of this conjecture, including for certain deformations of categories. We notate the deformation of the Fukaya--Seidel category of $\wttilde$ by a B--field $B\in H^2(\widetilde{X};\C)$ as $\mathcal{FS}(\wttilde;B)$, dropping it from the notation when $B=0$. On the B--side, we study the $\Gamma$-equivariant matrix factorisations of $\w_{\vec{\varepsilon}}=\w+\sum_{j=1}^{\ell-1}\varepsilon_i g_i$, where the $g_i$ are the elements of $\mathrm{Jac}\ \w$ which have the same $\Gamma$-degree as $\w$, each $\varepsilon_i\in \C$, and $\w_{\vec{\varepsilon}}$ only has a unique isolated singularity at the origin. 
\begin{mthm}\label{mainTheorem}
	\cref{FUHomologicalBHHMSConjecture} holds for all invertible curve singularities and any admissible symmetry group. Moreover, for any $\Gamma$-equivariant deformation $\w_{\vec{\varepsilon}}$ of $\w$ which has an isolated singularity at the origin, there exists a B--field $B\in H^2(\widetilde{X};\C)$ such that there is a quasi-equivalence of $\Z$-graded pre-triangulated $A_\infty$ categories over $\C$
	\begin{align*}
	\mathrm{mf}(\C^2, \Gamma,\w_{\vec{\varepsilon}})\simeq \mathcal{FS}(\wttilde;B).
	\end{align*}
\end{mthm}
\begin{rmk}
It should be noted that we do not study all deformations of $\w$ which remain $\Gamma$-equivariant since we are demanding that $\w_{\vec{\varepsilon}}$ only has an isolated singularity; however, the deformations we do consider is a Zariski-open set in $\Spec\C[g_1,\dots,g_{N}]$, where $N=\dim_\C\mathrm{HH}^2(\C^2,\Gamma,\w)$. 
\end{rmk}
\begin{rmk}
In \cref{BDeformations}, we will see that the relevant categories in the maximally graded case are rigid, and so we emphasise that the statement regarding deformations is a new result in comparison to this setting.
\end{rmk}
With a model for the equivariant Fukaya--Seidel category in hand, our strategy of proof of \cref{mainTheorem} follows the well-established method of finding and matching generating objects on both sides, including in the deformed cases. We discuss this further in \cref{StrategyOfProof}, but for now, we note that the direct sum of the generating objects in question forms a \emph{tilting object}. Recall that an object $\mathcal{E}$ in a pre-triangulated $A_\infty$/ dg- category is tilting\footnote{We follow the notation convention that morphisms beginning with upper-case letters are at the level of cohomology, whilst those beginning with lower-case letters are chain level morphisms.}  if $\mathrm{End}^i(\mathcal{E})=0$ and $\hom^\bullet(\mathcal{E},X)=0$ implies $X\simeq 0$ for any other object $X$. In the case where a tilting object is the direct sum of exceptional objects, one refers to its length as the length of the full, strong, exceptional collection which these objects form. \\

A famous result of Seidel \cite[Theorem 18.24]{SeidelBook} establishes that the Fukaya--Seidel category has a full (not necessarily strong) exceptional collection given by Lagrangian thimbles. Combining this with \cref{HomologicalBHHMSConjecture} yields the prediction that $\mathrm{mf}(\C^n,\Gamma_{\w},\w)$ should have a full exceptional collection of length $\mu(\wt)$, the Milnor number of the \emph{transpose polynomial}. Strengthening this, Lekili and Ueda conjectured that $\mathrm{mf}(\C^n,\Gamma_{\w},\w)$ should have a tilting object of length $\mu(\wt)$ (\cite[Conjecture 1.3]{LekiliUeda}). Slightly stronger still is the prediction of Hirano and Ouchi that this tilting object should come from a full and \emph{strong} exceptional collection (\cite[Conjecture 1.4]{HiranoOuchi}). This was shown for invertible polynomials in $n\leq 3$ variables in \cite{Kravets} and for all chain polynomials in \cite{HiranoOuchi}. Moreover, it was established in \cite{FKK} that the category of matrix factorisations for any maximally graded invertible polynomial has a full exceptional collection of the predicted length, and that this collection is strong under an additional (and mild) Gorenstein hypothesis. \\
Importantly, Seidel's result is formulated in the setting where the total space is exact and the Landau--Ginzburg model is not equivariant. Therefore, the evidence for predicting a full (strong) exceptional collection of a given length becomes weaker in the non-maximally graded setting. Nevertheless, the construction of a tilting object is a key element of our proof of \cref{mainTheorem}, and we establish the following: 

\begin{mcor}\label{TiltingCor}
	Let $\w$ be an invertible polynomial in two variables with admissible group of symmetries $\Gamma$ of index $\ell$. Then, $\mf(\C^2,\Gamma,\w)$, as well as any deformation considered in \cref{mainTheorem}, has a tilting object of length 
	\begin{align*}
	\frac{\mu(\wt)-1}{\ell}+\ell. 
	\end{align*}
\end{mcor}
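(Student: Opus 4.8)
The plan is to obtain \cref{TiltingCor} from \cref{mainTheorem} together with a count of Lagrangian thimbles; the existence of the tilting object itself is part of the work that goes into proving \cref{mainTheorem}. By \cref{mainTheorem} we have a quasi-equivalence $\mf(\C^2,\Gamma,\w)\simeq\mathcal{FS}(\wttilde)$, and by the adaptation to the (non-exact) surface $\widetilde X$ of \cite[Theorem 18.24]{SeidelBook} the right-hand side is generated by the Lagrangian thimbles attached to the critical points of $\wttilde_\eps$, which form a full exceptional collection of length equal to the number of critical points of $\wttilde_\eps$. Since, en route to \cref{mainTheorem}, one produces a tilting object in $\mf(\C^2,\Gamma,\w)$ through an $L$-graded extension of \cite[Theorem 2.1]{BIY}, the number of its indecomposable summands is the same integer (both computing the rank of the Grothendieck group), and \cref{TiltingCor} reduces to evaluating it.

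To count the critical points of $\wttilde_\eps$ I would argue globally on $\widetilde X$. Equivariance forces the Morsification $\wt_\eps$ to have vanishing linear part at the origin, so it has one (Morse) critical point there, and its remaining $\mu(\wt)-1$ critical points lie in $\C^2\setminus\{0\}$, on which $\mu_\ell$ acts freely; these descend to $\tfrac{\mu(\wt)-1}{\ell}$ critical points of $\wttilde_\eps$ away from the exceptional divisor $E$. For the total count, $\wttilde_\eps$ has isolated nondegenerate critical points (after a generic equivariant perturbation) and is topologically a Lefschetz fibration whose total space has Euler characteristic $\chi(\widetilde X)=\chi(\C^2/\mu_\ell)-1+\chi(E)=\ell$ --- the exceptional chain of $\ell-1$ rational curves having Euler characteristic $\ell$ --- so that the number of critical points is $\ell-\chi(F)$ for a regular fibre $F$. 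As $F$ is a free $\mu_\ell$-quotient of the Milnor fibre of $\wt$, one has $\chi(F)=\tfrac{1-\mu(\wt)}{\ell}$, whence the count is $\ell+\tfrac{\mu(\wt)-1}{\ell}$, as claimed. Comparing with the count away from $E$, exactly $\ell$ critical points cluster near $E$: there $\wttilde_\eps$ localises to a small perturbation of the pullback of $\xt\yt$, whose zero locus is the normal-crossings chain formed by the strict transforms of the two coordinate axes and all $\ell-1$ exceptional components, with precisely $\ell$ nodes. Equivalently, $\ell+\tfrac{\mu(\wt)-1}{\ell}=(\ell-1)+\dim\mathrm{Jac}(\wt)^{\mu_\ell}$ is the dimension of the FJRW state space of $(\wt,\widecheck{\overline{\Gamma}})$, using that $\mathrm{Jac}(\wt)$ decomposes as $\C\oplus\C[\mu_\ell]^{\oplus(\mu(\wt)-1)/\ell}$ as a $\mu_\ell$-representation.

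The main obstacle lies not in this bookkeeping but in its input: constructing the tilting object and, crucially, verifying that the generating collection is \emph{strong}, so that its direct sum is a genuine tilting object and not merely a full exceptional collection. Passing from the grading groups of \cite[Theorem 2.1]{BIY} to the quotient $L$ of $L_\w$ attached to a non-maximal admissible $\Gamma$ invalidates the degree and Serre-functor arguments that settle the maximally graded case; one must in addition isolate the $\ell$ summands arising from the crepant resolution of the $A_{\ell-1}$ singularity and show that, together with the $\tfrac{\mu(\wt)-1}{\ell}$ remaining ones, they admit no higher self-extensions. That is the technical heart of the argument.
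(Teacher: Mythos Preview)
Your proposal is correct but takes a genuinely different route from the paper. The paper establishes \cref{TiltingCor} entirely on the B--side and case by case: for each of the loop, chain and Brieskorn--Pham types it writes down the explicit collection $\mathcal{B}$ of matrix factorisations, computes all morphisms by hand (\cref{LoopEndAlgebra}, \cref{ChainEndAlgebra}, \cref{BPEndAlgebra}), shows that everything lives in degree~$0$, proves generation (\cref{BGenerates}, \cref{ChainBGenerates}, \cref{BPBGenerates}), and then simply counts the summands of $\mathcal{E}$. In particular, \cref{TiltingCor} is proved \emph{before} and independently of \cref{mainTheorem}; the A--side never enters. You instead grant the existence of the tilting object from that B--side work and outsource the length computation to the A--side via \cref{mainTheorem}, counting critical points of $\wttilde_\eps$ by a uniform topological argument. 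This inverts the paper's logical order (there, \cref{TiltingCor} feeds into \cref{mainTheorem}, not the other way round), but since the B--side construction of $\mathcal{E}$ is independent of mirror symmetry there is no circularity.

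What each approach buys: the paper's is self-contained and yields the explicit tilting object without any appeal to the Fukaya--Seidel side; yours gives a clean, type-independent explanation of the number $\frac{\mu(\wt)-1}{\ell}+\ell$ as (free quotient of the off-origin Morse points) $+$ (nodes of the normal-crossings chain on the exceptional locus). One caution: your Euler-characteristic identity $\#\mathrm{crit}=\chi(\widetilde X)-\chi(F)$ is not immediate for the non-proper map $\wttilde_\eps:\widetilde X\to\C$; the paper only asserts tameness chart-by-chart, so to make that step rigorous you would need to globalise Broughton's argument over $\widetilde X$, or (as you also sketch) bypass it with the direct local count near $E$ --- which is exactly how the paper enumerates the type-(iii) critical points. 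Your side remark that $\mathrm{Jac}(\wt)\cong\C\oplus\C[\mu_\ell]^{\oplus(\mu(\wt)-1)/\ell}$ as a $\mu_\ell$-module also deserves a line of justification; the paper does not use this, preferring the direct enumeration.
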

The length of this tilting object can be seen as being predicted by the Dubrovin conjecture for Landau--Ginzburg models. This is strongly analogous to the original Dubrovin conjecture \cite{Dubrovin}, and compares the algebraic structure of the open string B--model with the analytic structure of the closed-string A--model of the \emph{same} Landau--Ginzburg model. The relevant closed-string A--model is studied in FJRW theory \cite{FJR}, and should be thought of as a Landau--Ginzburg analogue of Gromov--Witten theory. The input for the theory is as follows. Let $\w$ be an invertible polynomial with $\Gamma$ an admissible group of symmetries as before. Then, we consider $\overline{\Gamma}=\Gamma\cap \ker\chi$, where $\chi=\chi_\w|_{\Gamma}$ (recall that $\chi_\w$ was the character given by projecting onto $t_{n+1}$ in \eqref{maxSymGp}). Concretely, $\overline{\Gamma}_\w$ is given by setting $t_{n+1}=1$ in the presentation of $\Gamma_{\w}$ in \eqref{maxSymGp}, and $\overline{\Gamma}\subseteq\overline{\Gamma}_\w$ is the corresponding subgroup. We call this the admissible group of \emph{finite} symmetries; however, particularly when FJRW theory is the focus, it is also common to refer to $\overline{\Gamma}$ as just being admissible. The state space of the theory is then defined as
\begin{align*}
\mathscr{H}_{\w,{\overline{\Gamma}}}:=\bigoplus_{\gamma\in{\overline{\Gamma}}}\mathscr{H}_{\gamma}^{{\overline{\Gamma}}},
\end{align*}
where $\mathscr{H}_\gamma=H^{\dim_\C S_\gamma}(\C^{\dim_\C S_\gamma},\Lambda_\gamma;\C)$ for $S_\gamma=\Spec \C[x_1,\dots,x_n]^\gamma$ the fixed locus of $\gamma$, $\w|_{S_\gamma}$ is the restriction of $\wt$ to this fixed locus, and $\Lambda_\gamma$ is the stop in $S_\gamma$ determined by $\w|_{S_{\gamma}}$. Just as in Gromov--Witten theory, $\mathscr{H}_{\w,{\overline{\Gamma}}}$ can be equipped with a Frobenius manifold structure. Then, Dubrovin's conjecture in the Landau--Ginzburg setting is the following. 
\begin{mconj}[Landau--Ginzburg Dubrovin conjecture]\label{LGDubrovin}
	Let $\w$ be an invertible polynomial with admissible group of finite symmetries $\overline{\Gamma}$. Then, the Frobenius manifold structure of $\mathscr{H}_{\w,\overline{\Gamma}}$ is generically semisimple if and only if $\mf(\C^n,\Gamma,\w)$ admits a full exceptional collection $\{E_1,\dots, E_N\}$ of length $N=\dim_\C\mathscr{H}_{\w,\overline{\Gamma}}$.
\end{mconj}
For $\w$ and $\Gamma$ as in \cref{mainTheorem}, a short computation shows that
\begin{align}
\dim_\C\mathscr{H}_{\w,\overline{\Gamma}}= \frac{\mu(\wt)-1}{\ell}+\ell,
\end{align}
where $\ell=[\Gamma_{\w}:\Gamma]$ and $\wt$ is the transpose polynomial. Therefore, for the cases studied in this paper, \cref{LGDubrovin} reduces to establishing semisimplicity of the corresponding FJRW A--model. This was recently done in \cite{FHS} for all but two Brieskorn--Pham curve singularities with $\overline{\Gamma}$ corresponding to $\Gamma\simeq \mathrm{im}\ \varphi\simeq\C^*$, for $\varphi$ as in \eqref{groupInclusion}, as well as simple curve singularities.\\
It should be noted that, whilst not trivial, \cref{LGDubrovin} becomes less interesting in the maximally graded case in any number of variables. This is because the corresponding closed-string B--model, Saito's theory of primitive forms, is known to always be semisimple. Therefore, \cref{LGDubrovin} is an immediate consequence of closed-string mirror symmetry, as established in \cite{HLSW}. In the non-maximally graded cases, there is no well-defined closed-string B--model, let alone mirror symmetry results, from which one can deduce (instances of) \cref{LGDubrovin}.
\\

We end this introduction with the observation that \cref{TiltingCor} also relates to previous work at the intersection of algebraic geometry and representation theory, independently of homological mirror symmetry. Namely, all invertible polynomials in two variables yield $L$-graded Gorenstein rings of Krull dimension one with \emph{non-positive} Gorenstein parameter (see \cref{MCMSubsection} for a definition and discussion). The existence of a tilting object for $\Z$-graded Gorenstein rings of Krull dimension one with non-positive Gorenstein parameter was established in \cite[Theorem 1.2]{BIY}, and our corollary can be seen as a partial generalisation of \cite[Theorem 2.1]{BIY}, which specifically studies $\Z$-graded hypersurface singularities $\mathbf{k}[x,y]/(f)$ where $|x|=|y|=1$. In \textit{loc. cit.}, $\mathbf{k}$ is any field, although our result is only proven over $\C$. 
\subsection{Strategy of proof}\label{StrategyOfProof}
Broadly speaking, our strategy of proof follows that of \cite{HabermannSmith}. Namely, we handle the cases of loop, chain and Brieskorn--Pham invertible polynomials separately. We begin with the loop case and study the undeformed category $\mathrm{mf}(\C^2,\Gamma,\w)$, showing that it has a tilting object, which we call $\mathcal{E}$. In order to study the loop A--model, a key element of our proof is the fact that the resonant Morsifications which were utilised in \emph{loc. cit.} are in fact $\widecheck{\overline{\Gamma}}$-invariant, meaning that it descends to $X$ and can then be pulled back to $\widetilde{X}$, resulting in the function $\wttilde_\varepsilon:\widetilde{X}\rightarrow \C$ which has Morse critical points, and whose smooth fibre is the quotient of the Milnor fibre of $\wt$ by $\widecheck{\overline{\Gamma}}$. We discuss subtleties related to this in \cref{EqMorsificationSection}. With this set-up, our approach reduces to the familiar strategy, albeit in the non-exact setting. We discuss this issue in \cref{NonExactFSSection}. \\
We describe the directed category $\mathcal{A}_{\widecheck{\overline{\Gamma}}}$ associated to vanishing cycles in the smooth fibre of $\wttilde_\varepsilon$. We then establish mirror symmetry by showing that, at the level of cohomology, the endomorphism algebra of the direct sum of the objects in $\mathcal{A}_{\widecheck{\overline{\Gamma}}}$ matches the endomorphism algebra of $\mathcal{E}$ before appealing to formality to establish the corresponding chain-level statement. With the loop case handled, we then deal with the chain and Brieskorn--Pham cases, explaining the alterations required in comparison to the loop case. This establishes the first statement of \cref{mainTheorem} about the undeformed categories.\\
In order to extend this statement to the deformed categories, we compute the second and third Hochschild cohomology groups of $\mf(\C^2,\Gamma,\w)$ using a result of \cite{BallardFaveroKatzarkov} in order to identify every (infinite order) deformation of this category with the category of $\Gamma$-equivariant matrix factorisations of a deformation of $\w$. For the A--model, we add a non-unital B--field in order to deform the Floer products in the desired way. The proof of \cref{mainTheorem} for the deformed categories follows by explicitly matching the deformation parameters on both sides.
\subsection{Structure of the paper}
As in \cite{HabermannSmith}, we begin by considering the loop case, first studying the B--model in \cref{LoopBModel}, before moving on to the A--model. We discuss the subtleties regarding choice of Morsification and non-exactness of the total space in \cref{EqMorsificationSection} and \cref{NonExactFSSection}, respectively, with the rest of \cref{LoopAModel} devoted to the study of $\mathcal{FS}(\wttilde)$ for loop polynomials. The section ends with a proof of \cref{mainTheorem} in this undeformed case. \\
We then consider the chain case in Sections \ref{ChainBmodel} and \ref{ChainAmodel}, and Brieskorn--Pham polynomials in \cref{BPPolynomials}, although it should be emphasised that all technical details, both in general and as they differ from the maximally graded case are already present in the loop case; the chain and Brieskorn--Pham cases are essentially simplifications. \\
In \cref{DeformationsSection}, we study deformations of categories, completing the proof of \cref{mainTheorem} and \cref{TiltingCor}.
\subsection{Acknowledgements}
The author would like to thank Jack Smith for his collaboration in \cite{HabermannSmith}, of which this work is an outgrowth, for valuable comments on an earlier draft of this note, and also patiently explaining some technical points about B--fields in Floer theory. He would also like to thank the anonymous referee for careful reading of this paper, their numerous suggestions for improvement, and encouraging the author to extend the main result to include deformations. The author gratefully acknowledges support from the University of Hamburg and the Deutsche Forschungsgemeinschaft under Germany's Excellence Strategy -- EXC 2121 ``Quantum Universe" -- 390833306.
\section{Loop B-model}
\label{LoopBModel}
In this section, our main goal is to study the dg-category of $\Gamma$-equivariant matrix factorisations, $\mf(\C^2,\Gamma,\w)$, where $\w=x^py+y^qx$ and $\Gamma\subseteq \Gamma_\w$ is an admissible subgroup of the maximal grading group of index $\ell\leq d=\gcd(p-1,q-1)$. Without loss of generality, we will assume $p\geq q\geq 2$. This last constraint of $q\geq 2$ comes from the fact that the case of $q=1$ has multiple isolated singularities, and the Milnor fibre for the one at the origin is equivalent to the Milnor fibre of $x^2+y^2$. As in the maximally graded case, we will encode this equivariance by an $L$-grading, where $L$ is a rank one abelian group with cyclic torsion. \\

Let\footnote{This notation is consistent with Futaki--Ueda \cite{FutakiUedaD}, but opposite to that of Dyckerhoff \cite{Dyckerhoff}.} $S=\C[x,y]$ and $R=S/(\w)$. Recall that $\Gamma_\w$-equivariance corresponds to an $L_\w$ grading on $R$, where $L_\w$ is the character group of $\Gamma_\w$. This group is generated by $|x|=\x$, $|y|=\y$ and $|\w|=\vec{c}$, modulo the relations
\begin{align*}
p\x+\y=\x+q\y=\vec{c},
\end{align*}
which is equivalent to the group $\Z^2$ quotiented by the subgroup generated by $(p-1,1-q)$. Correspondingly, for $\Gamma\subseteq\Gamma_{\w}$ of index $\ell$,  
\begin{align*}
L\simeq \Z^2/(\frac{p-1}{\ell},\frac{1-q}{\ell})\simeq\Z\oplus\Z/(\frac{d}{\ell}).
\end{align*}
We will consider this as the group generated by $\x$ and $\y$ modulo the relation $\frac{p-1}{\ell}\x=\frac{q-1}{\ell}\y$, and note that $L/\Z\vec{c}\simeq\Z/\left(\frac{pq-1}{\ell}\right)$. Before moving on, we make a brief detour to remind the reader of the relevant commutative algebra notions which we will require. 
\subsection{Recollections on maximal Cohen--Macaulay modules}\label{MCMSubsection}
For an $L$-graded ring $R$ as above, or as in \cref{ChainBmodel,BPPolynomials}, recall that graded Gorenstein means that there is an isomorphism 
\begin{align*}
\mathrm{RHom}_{\gr-R}(\C,R(-\alpha))\simeq \C[-n],
\end{align*}
where $\alpha$ is the Gorenstein parameter, $n$ is the Krull dimension of $R$, and $M(l)$ for an $L$-graded $R$ (or $S$) module is an internal grading shift such that $M(l)_i=M_{i+l}$. The Gorenstein parameter $\alpha$ is \emph{negative} (resp. zero, positive) if its projection to the $\Z$-factor of $L$ is negative (resp. zero, positive). For an invertible polynomial, this value is $-d_0$, which is negative for all invertible polynomials other than $x^2+y^2$, where it is zero.\\
Later, we will utilise Serre duality for maximal Cohen--Macaulay (MCM) modules which follows from Auslander--Reiten duality (\cite{AusReit}). Recall that, when $R$ is Gorenstein, a finitely-generated $R$-module is MCM if 
\begin{align}\label{MCMExtVanishing}
\Ext_{\gr-R}^i(M,R)=0\ \text{ for all }i>0,
\end{align}
and the \emph{stabilised} category of maximal Cohen--Macaulay modules, denoted by $\underline{\mathrm{MCM}}(R)$, has the same objects as $\mathrm{MCM}(R)$ (i.e. finitely generated $R$ modules such that \eqref{MCMExtVanishing} holds), but the morphisms are given by
\begin{align*}
\underline{\Hom}_{\gr-R}(M,N):=\Hom_{\gr-R}(M,N)/\mathcal{P}(M,N),
\end{align*} 
where $\mathcal{P}(M,N)$ are those morphisms which factor through some free $R$-module. For any $M,N\in \mathrm{MCM}(R)$ such that $\Hom_{\gr-R}(M,N)$ is finite dimensional there is then Serre duality
\begin{align}\label{SerreDuality}
\underline{\Hom}_{\gr-R}(M,N)\simeq \underline{\Hom}_{\mathrm{gr}-R}(N,M(-\alpha)[n-1])^\vee,
\end{align}
where $(-)^\vee$ is the $\C$-linear dual and $n$ is again the Krull dimension of $R$. Note that, in our case, the fact that $\w$ has an \emph{isolated} singularity means that all Hom-spaces are finite dimensional. For a proof of Serre duality in the $\Z$-grading case, we refer to \cite[Corollary 3.5]{IyaTak}, with the $L$-graded case we need being a straightforward generalisation. \\

By an $L$-graded matrix factorisation of rank $r$, $K^\bullet$, we mean a two periodic sequence of $L$-graded, rank $r$, free $S$-modules 
\begin{align}\label{MFStabilisationDef}
K^\bullet = ( \cdots \rightarrow K^i \xrightarrow{k^i} K^{i+1} \xrightarrow{k^{i+1}} K^{i+2} \rightarrow \cdots ),
\end{align}
where $k^{i+1}\circ k^{i}=\w \cdot\mathrm{Id}_{r}$, and graded two-periodicity means a fixed choice of $K^\bullet(\vec{c})\simeq K^\bullet[2]$. This presentation of a matrix factorisation naturally arises from the process of \emph{stabilisation}; see, for example, \cite[Sections 2.1 and 2.2]{Dyckerhoff}. Namely, one starts with a finitely generated $R$-module $M$, and considers an $R$-free resolution. By a famous result of Eisenbud (\cite{Eisenbud}), any such resolution eventually becomes two periodic (i.e. stabilises). To get a matrix factorisation, we extend this two periodic complex indefinitely to the right and then replace the $R$ modules in the complex by the corresponding free $S$ modules. The inverse of this process is given by taking the cokernel of the map $k^0$. In fact (\cite[Prop. 3.5, Theorem 3.10]{Orlov09}), there is an equivalence of categories
\begin{align*}
\mathrm{coker}:\mathrm{HMF}(\C^2,\Gamma,\w)\xrightarrow{\sim}\underline{\mathrm{MCM}}(R),
\end{align*}
where $\mathrm{HMF}(\C^2,\Gamma,\w)$ is the homotopy category of $\mathrm{mf}(\C^2,\Gamma,\w)$. Observe that, if $K^\bullet$ is a matrix factorisation, then we have
\begin{align}\label{GradingShiftFunctor}
K^\bullet[n]\mapsto \mathrm{coker}((-1)^nk^n)=M[n]
\end{align}
under this equivalence. We will use this fact frequently, particularly in the proofs of generation statements.\\
A different perspective on the category of matrix factorisations is provided by a famous result of Buchweitz and Orlov (\cite{Buchweitz}, \cite[Theorem 39]{Orlov09}), which, generalised to stacks in \cite{PolishVainStacks} and applied to our setting, shows that
\begin{align*}
D^b_{\mathrm{sg}}([\w^{-1}(0)/\Gamma])\simeq \mathrm{HMF}(\C^2,\Gamma,\w).
\end{align*}
In the above, the category on the left is the derived category of singularities of the stack $[\w^{-1}(0)/\Gamma]$, which is defined as the quotient of the derived category of coherent sheaves on $[\w^{-1}(0)/\Gamma]$ by the subcategory of perfect complexes. Moreover, $L$-graded $R$-modules naturally correspond to sheaves on $[\w^{-1}(0)/\Gamma]$, yielding an equivalence
\begin{align*}
D^b_{\mathrm{sg}}(\gr-R)\simeq D^b_{\mathrm{sg}}([\w^{-1}(0)/\Gamma]).
\end{align*}
Justified by the above equivalences, we will frequently switch between talking about graded $R$-modules, sheaves on $[\w^{-1}(0)/\Gamma]$ and matrix factorisations. Moreover, observe that \eqref{SerreDuality} and these equivalences gives Serre functors on each of the categories we consider. 
\subsection{Graded matrix factorisations for loop polynomials}
In this subsection, we introduce and study the basic objects from which we will build our tilting object. These are broadly similar to those of \cite[Section 2.1]{HabermannSmith}; however, direct computation as in \textit{loc. cit.} would be intractable, since the ranks of the matrix factorisations we need to consider increases with the index of $\Gamma\subseteq \Gamma_{\w}$. The main technical difference is therefore the utilisation of Serre duality in order to facilitate computation. In particular, we will see that the quiver \cref{figLoopQuiver} with $\ell=1$ corresponds on-the-nose to the \cite[Fig. 1]{HabermannSmith}. \\
From the equivalences in \cref{MCMSubsection}, the intuition is that $\mf(\C^2,\Gamma,\w)$ should correspond to structures sheaves of the irreducible components, sheaves supported at the origin (the singular point of $\w^{-1}(0)$), and $L$-graded shifts of these. This indeed turns out to be the case.\\

Analogously to \cite{HabermannSmith}, we introduce the notation $\w=xyw_1\dots w_\ell$, where 
\begin{align*}
w_r=x^{\frac{p-1}{\ell}}-e^{\frac{\pi\sqrt{-1}}{\ell}}\eta^ry^{\frac{q-1}{\ell}}
\end{align*}
for $\eta$ a fixed primitive $\ell^{\text{th}}$ root of unity. Continuing with notation introduced in \cite[Section 2.2]{HabermannSmith}, we define $w=w_1\dots w_\ell$. With this, there are $\ell+2$ matrix factorisations coming from ($\Gamma$-equivariantly) factoring $\w$. These correspond to 
\begin{align*}
\obj{x}{}^\bullet = ( \cdots \rightarrow S(-\vec{c}) \xrightarrow{yw} S(-\vec{x}) \xrightarrow{x} S \rightarrow \cdots ),
\\ \obj{y}{}^\bullet = ( \cdots \rightarrow S(-\vec{c}) \xrightarrow{xw} S(-\vec{y}) \xrightarrow{y} S \rightarrow \cdots ),
\end{align*}
as well as the matrix factorisations
\begin{align*}
\obj{{w_r}}{}^\bullet = ( \cdots \rightarrow S(-\vec{c}) \xrightarrow{\w/w_r} S(-\frac{p-1}{\ell}\x) \xrightarrow{w_r} S\rightarrow \cdots ).
\end{align*}

These matrix factorisations are the obvious generalisation of the maximally graded case, and, as in that case, we also consider the objects
\begin{align*}
\obj{x}{i}&=\obj{x}{}((i+1-p)\x)\\
\obj{y}{j}&=\obj{y}{}((i+1-q)\y)
\end{align*} 
where $i=p-\frac{p-1}{\ell},\dots,p-1$ and $j=q-\frac{q-1}{\ell},\dots, q-1$.\\

On the other hand, the generalisation of the matrix factorisations corresponding to sheaves supported at the origin require more technical considerations. Namely, for $p-\frac{p-1}{\ell}\leq i\leq p-1$, $1\leq j\leq q-1$, we set $k=\lfloor \frac{(j-1)\ell}{q-1}\rfloor$ and consider the ideal
\begin{align}\label{definingIdeal}
I_{i,j}=&\  (x^{i-(\ell-1-k)\frac{p-1}{\ell}})+\sum_{t=1}^{k}(x^{i-(\ell-k-1+t)\frac{p-1}{\ell}}y^{j-(k+1-t)\frac{q-1}{\ell}}) + (y^{j})\\
=&\ (x^{i-(\ell-1-k)\frac{p-1}{\ell}}, x^{i-(\ell-k)\frac{p-1}{\ell}}y^{j-k\frac{q-1}{\ell}}, \dots, x^{i-(\ell-1)\frac{p-1}{\ell}}y^{j-\frac{q-1}{\ell}},y^{j})
\end{align} 
and the $L$-graded $R$-modules
\begin{align*}
R((i+1)\x+(j+1)\y)/I_{i,j}.
\end{align*}
The corresponding matrix factorisation, which we denote by $\obj{0}{i,j}$, is given by stabilising this module. Namely, an $R$-free resolution can be built by beginning with
\begin{align*}
R(\vec{c}-(k+1)\frac{p-1}{\ell}\vec{x}+j\y)\oplus\bigoplus_{t=0}^{k-1}R(\vec{c})\oplus R((i+1)\x+\y)
\rightarrow{}
R((i+1)\x+(j+1)\y),
\end{align*}
where the map is multiplication by $\begin{pmatrix}
	x^{i-(\ell-1-k)\frac{p-1}{\ell}}& \dots& y^{j}
\end{pmatrix}$, yielding a rank $(k+2)$ matrix factorisation. Observe in particular that substituting $\ell=1$ into this presentation recovers the definition of $\obj{0}{i,j}$ in \cite{HabermannSmith}. The $k$ terms in the middle of the ideal in \eqref{definingIdeal} are all monomials of degree $i\vec{x}+j\vec{y}-(p-1)\vec{x}\ (=i\vec{x}+j\vec{y}-(q-1)\vec{y})$.
\begin{rmk}\label{ChoicesRMK}
It might seem objectionable that the symmetry which exists between $p$ and $q$ has been broken in the above ideals; however, we will later see that, for example, taking the ideals corresponding to $1\leq i\leq p-1$ and $q-\frac{q-1}{\ell}\leq j\leq q-1$, would lead to a tilting object whose endomorphism algebra corresponds to the same quiver-with-relations as the choice we have made -- cf. \cref{ChoicesRMK2}.
\end{rmk}

From this, it is straightforward to check that the maps defining the matrix factorisation are given in even degree by 
\begin{align}\label{EvenDifferential}
\diff_0=\begin{pmatrix}
y^{j-k\frac{q-1}{\ell}} & 0 & 0  & \dots & 0 &x^{p-i+\frac{p-1}{\ell}(\ell-1-k)}y\\
-x^{\frac{p-1}{\ell}}& y^{\frac{q-1}{\ell}} & 0 &\dots& 0 & 0\\
0 & -x^{\frac{p-1}{\ell}} & y^{\frac{q-1}{\ell}} & \dots &0 & 0\\
0 & 0 & -x^{\frac{p-1}{\ell}}&  \dots & 0& 0\\
\vdots &\vdots & \vdots   & \ddots& \vdots  & \vdots \\
0 & 0 & 0  & \hdots & y^{\frac{q-1}{\ell}} & 0\\
0 & 0 & 0  & \hdots& -x^{i-(\ell-1)\frac{p-1}{\ell}} & xy^{q-j}\\
\end{pmatrix},
\end{align}
and in odd degree by $\diff_1=\mathrm{Adj}(\diff_0)$, the adjugate matrix. Explicitly, we have that $\obj{0}{i,j}$ corresponds to the matrix factorisation 
 \begin{center}
	\begin{tikzcd}[row sep=4ex, column sep=5ex]
		S(\vec{c}-\frac{p-1}{\ell}\x) \ar[d, phantom, description, "\bigoplus"]  \ar[dd, phantom, description, "\cdots\hskip7ex\phantom{\hskip20ex\cdots}"] &S(\vec{c}-(k+1)\frac{p-1}{\ell}\x+j\y)  \ar[d, phantom, description, "\bigoplus"] & S(2\vec{c}-\frac{p-1}{\ell}\x)  \ar[d, phantom, description, "\bigoplus"] \ar[dd, phantom, description, "\cdots\hskip7ex\phantom{\hskip-40ex\cdots}"]
		\\ 
		\bigoplus_{t=0}^{k-1} S(\vec{c}-\frac{p-1}{\ell}\x) \ar[d, phantom, description, "\bigoplus"] \ar[r, shorten >=3ex, shorten <=3ex,"\diff_0"] &\bigoplus_{t=0}^{k-1}S(\vec{c}) \ar[d, phantom, description, "\bigoplus"] \ar[r, shorten >=3ex, shorten <=3ex,"\diff_1"] & \bigoplus_{t=0}^{k-1}S(2\vec{c}-\frac{p-1}{\ell}\x)\\
		S((i+1)\x+(j+1)\y-\vec{c})&S((i+1)\x+\y)&S((i+1)\x+(j+1)\y)
	\end{tikzcd}
\end{center}
where the rightmost term is in cohomological degree $0$ and $\diff_0$, $\diff_1$ map between the entire columns, not just he middle summands.\\

As in the maximally graded case, we are interested in a full subcategory $\mathcal{B}$ of $\mathrm{mf}(\C^2,\Gamma,\w)$ consisting of the objects described above. Namely, let $\mathcal{B}$ be the category consisting of the $\frac{pq-1}{\ell}+\ell$ objects 
\[
\{\obj{0}{i,j}, \obj{x}{i}[3],\obj{y}{q-\frac{q-1}{\ell}}[3],\dots, \obj{y}{q-1}[3], \obj{w_1}{}[3],\dots, \obj{w_\ell}{}[3]\}_{i=p-\frac{p-1}{\ell}, \dots, p-1;\ j=1, \dots, q-1}.
\]
The reason for the shifts is so that all morphisms turn out to have cohomological degree $0$. In the following sections we compute the cohomology level morphisms between the objects in this category. The reader willing to take these computations on faith may skip directly to \cref{LoopEndAlgebra} for the characterisation of $\mathcal{B}$ as a quiver algebra. 
\subsection{Morphisms between the $\obj{x}{}$'s, $\obj{y}{}$'s and $\obj{w_r}{}$'s}
As in \cite{HabermannSmith}, we leverage a result of Buchweitz \cite[Section 1.3, Remark (a)]{Buchweitz} which establishes the following: given $L$-graded $R$-modules $M$ and $M'$ with corresponding stabilisations $K$ and $K'$, we have
\begin{align*}
\Hom^\bullet_{\mathrm{HMF}(\C^2, \Gamma, \w)} (K, K') &\simeq \mathrm{H}^\bullet \left(\Hom_{\gr-R} (K \otimes_{S} R, M')\right)
\end{align*}
The $\Hom$ on the right-hand side is taken component-wise on the complex $K \otimes_S R$. We refer to \cite[Proposition 2.23]{Kravets} for a proof of the statement. Note that Buchweitz's original remark is missing a Gorenstein assumption, and, as demonstrated in \cite[Remark 2.24]{Kravets}, does not hold without it. 

\begin{rmk}
Strictly speaking, Buchweitz's result only applies for $\bullet\gg0$; however, since $[2]\simeq (\vec{c})$ in $\mathrm{HMF}(\C^2, \Gamma, \w)$, we can always achieve that the degree is high enough for the theorem to apply, and this does not affect the result of the calculation. 
\end{rmk}
For calculations regarding the modules $\obj{x}{}$, $\obj{y}{}$ and $\obj{w_r}{}$, the arguments carry over almost verbatim from the maximally graded case \cite[Sections 2.3 and 2.4]{HabermannSmith}, and we have: 

\begin{lem}
	In $\mathrm{HMF}(\C^2, \Gamma, \w)$, we have the following:
	\begin{enumerate}[(i)]
		\item \label{KIorthonormal} For any $i\in \Z$, the objects $\obj{x}{i},\dots, \obj{x}{i+\frac{p-1}{\ell}-1}$ are exceptional and pairwise orthogonal. 
		\item \label{KIIorthonormal} For any $j\in \Z$, the objects $\obj{y}{j},\dots, \obj{y}{j+\frac{q-1}{\ell}-1}$ are exceptional and pairwise orthogonal.
		\item \label{KIIIorthonormal} The objects $K_{w_{1}}, \dots, K_{w_{\ell}}$ are exceptional and pairwise orthogonal.
		\item \label{KIKIIorthonormal}For each $i=p-\frac{p-1}{\ell},\dots, p-1$, $j=q-\frac{q-1}{\ell},\dots, q-1$, $r=1,\dots, \ell$, $\obj{x}{i}$, $\obj{y}{j}$ and $\obj{w_r}{}$ are mutually orthogonal. \hfill\qed
	\end{enumerate}
\end{lem}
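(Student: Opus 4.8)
The plan is to leverage the cokernel equivalence $\mathrm{coker}\colon\mathrm{HMF}(\C^2,\Gamma,\w)\xrightarrow{\sim}\underline{\mathrm{MCM}}(R)$ together with Buchweitz's formula, reducing every statement to a $\Hom$-computation of graded $R$-modules. For each object $K$ in the list one takes the stabilisation, tensors the two-periodic $S$-complex with $R$, and computes cohomology of the resulting complex after applying $\Hom_{\gr-R}(-,M')$ componentwise. Since all these objects are rank-one matrix factorisations of the shape $S(-\vec c)\xrightarrow{\w/g}S(-\deg g)\xrightarrow{g}S$ for $g\in\{x,y,w_r\}$, the complex $K\otimes_S R$ is simply $\cdots\to R(-\vec c)\xrightarrow{\w/g}R(-\deg g)\xrightarrow{g}R\to\cdots$, and, because $\w=0$ in $R$, this is the $2$-periodic resolution of $R/(g)$. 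So the key reduction is: $\Hom^\bullet_{\mathrm{HMF}}(K_g,K_{g'})$ is the cohomology of the complex obtained by applying $\Hom_{\gr-R}(-,R/(g'))$ to the $2$-periodic free resolution of $R/(g)$, i.e.\ a periodic $\mathrm{Ext}^\bullet_R(R/(g),R/(g'))$.

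The main work is then a sequence of short commutative-algebra computations of multiplication maps $g'\colon R/(g)\to R/(g)$ and their kernels/cokernels, keeping track of the $L$-grading. First I would do the diagonal cases: for $K_x$ one checks that multiplication by $x$ on $R/(x)\cong S/(x,yw)\cdot(\text{shift})$ has the expected kernel/image so that only the identity survives in degree $0$ and nothing survives in the internal degrees corresponding to the shifts $\obj{x}{i},\dots,\obj{x}{i+\frac{p-1}{\ell}-1}$, giving exceptionality and mutual orthogonality in (i); the argument for $\obj{y}{}$ in (ii) is identical with $p\leftrightarrow q$. For (iii), since $w_r$ and $w_s$ are coprime for $r\neq s$ (they cut out distinct lines through the origin once $x,y$ are inverted, and in $R=S/(xyw)$ the relevant localisations behave well), multiplication by $w_s$ is injective with the right cokernel on $R/(w_r)$, so $K_{w_r}$ is exceptional and the $K_{w_r}$ are pairwise orthogonal. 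For (iv) one similarly observes that $(x)$, $(y)$ and $(w_r)$ are pairwise coprime in the relevant sense, so the corresponding $\mathrm{Ext}$-complexes are acyclic except in the trivial spot, which is killed by the cohomological shift being outside the relevant window. Throughout, the grading bookkeeping — verifying that the surviving classes sit in internal degree $0$ (hence vanish, since we are only claiming orthogonality, not computing a nonzero self-$\Hom$) and that no stray class appears in another internal degree — is what makes the statement precise.

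The one genuine subtlety, and the step I expect to be the main obstacle, is the non-maximally graded bookkeeping: in the maximally graded case of \cite{HabermannSmith} the torsion in $L$ is trivial along the relevant directions, whereas here $L\simeq\Z\oplus\Z/(d/\ell)$ and $L/\Z\vec c\simeq\Z/\big(\tfrac{pq-1}{\ell}\big)$, so one must check that the $\ell$ factors $w_r$ really do remain pairwise non-associate and that the windows $i=p-\tfrac{p-1}{\ell},\dots,p-1$ and $j=q-\tfrac{q-1}{\ell},\dots,q-1$ are exactly the ranges for which the multiplication maps are injective with the claimed cokernels. Concretely one has to confirm that $x^{\frac{p-1}{\ell}}$ and $y^{\frac{q-1}{\ell}}$ generate the degree-$\tfrac{p-1}{\ell}\vec x=\tfrac{q-1}{\ell}\vec y$ piece appropriately and that no coincidence of internal degrees forced by the torsion relation $\tfrac{p-1}{\ell}\x=\tfrac{q-1}{\ell}\y$ introduces an unexpected morphism; once this is in place, the arguments of \cite[Sections 2.3 and 2.4]{HabermannSmith} transfer verbatim as the excerpt already indicates.
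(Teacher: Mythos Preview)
Your proposal is correct and follows essentially the same approach as the paper: the paper simply states that the arguments carry over almost verbatim from the maximally graded case in \cite[Sections~2.3 and~2.4]{HabermannSmith}, and your sketch---Buchweitz's formula reducing to multiplication maps on $R/(g)$ plus $L$-grading bookkeeping---is precisely what that entails. You have also correctly identified the only genuine new content, namely verifying that the torsion relation $\tfrac{p-1}{\ell}\vec x=\tfrac{q-1}{\ell}\vec y$ in $L$ does not produce spurious morphisms in the specified windows.
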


\subsection{Morphisms between the $\obj{w_r}{}$'s and $\obj{0}{}$'s}
\label{BMorphisms4}
The fact that $\Hom^\bullet(\obj{w_r}{}, \obj{0}{i,j})=0$ essentially follows from the arguments of \cite[Section 2.5]{HabermannSmith}. In the other direction $\Hom^\bullet(\obj{0}{i,j}, \obj{w_r}{})$ we argue as follows. Firstly, observe that, since $\w$ has an isolated singularity at the origin, morphisms of $R$-modules are finite dimensional, and Serre duality for the corresponding MCM modules applies. We then observe that 
 \begin{align}\label{ARDuality}
\dim_\C \Hom^\bullet(\obj{w_r}{},\obj{0}{i,j}(\vec{c}-\x-\y))=
 \begin{cases}
1\qquad\text{if }\bullet=-3\\
0\qquad\text{otherwise},
 \end{cases}
 \end{align}
 and so $\Hom^\bullet(\obj{0}{i,j}, \obj{w_r}{})$ is non-trivial only in cohomological degree three, where it is rank one; it is then straightforward to write down this non-trivial element. 
 \begin{lem}\label{KijKwrHoms}
 For each $r=1,\dots,\ell$, there is a single morphism between $\obj{0}{i,j}$ and $\obj{w_r}{}$ given by 
 \begin{align*}
 \Hom^{3}(\obj{0}{i,j},\obj{w_r}{})=\C\cdot\begin{pmatrix}
y^{(k+1)\frac{q-1}{\ell}-j}\\
e^{-\frac{\pi i}{\ell}}\eta^{-r}\\
(e^{-\frac{\pi i}{\ell}}\eta^{-r})^2\\
\vdots\\
(e^{-\frac{\pi i}{\ell}}\eta^{-r})^{k}\\
(e^{-\frac{\pi i}{\ell}}\eta^{-r})^{k+1}x^{p-1-i}
 \end{pmatrix},
 \end{align*}
 where $k=\lfloor \frac{(j-1)\ell}{q-1}\rfloor$ as before.
 \end{lem}
\begin{proof}
Observe that the complex computing the morphisms has differential given by $\diff_0^T$ in odd degree and $\diff_1^T=\mathrm{Adj}(\diff_0)^T$ in even degree, where $\diff_0$ is the map from \eqref{EvenDifferential}. It is then clear that the above vector defines an element in the kernel; the only non-trivial term to check is that it is in the kernel of the last row of $\diff_0^T$, although this follows from observing that this factors into $(\ell-1-k)$ polynomials, one of which is $w_r$. It is clear that this is not in the image of $\diff_1^T$, since the $k-1$ constant elements in the kernel cannot be in the image of a degree $\frac{p-1}{\ell}\x$-map. The fact that this spans the cohomology group follows from \eqref{ARDuality}.
\end{proof}
\begin{rmk}\label{MorphismsRMK}
It might be worth reminding the reader that a morphism of matrix factorisations is a pair of morphisms which are compatible with the differentials on both complexes. The Hom-space written in \cref{KijKwrHoms} uniquely determines such a pair of morphisms (up to scaling), although is, strictly, speaking, only half of the data. In this case, the transpose of the vector spanning this space is (up to scaling) the component of the morphism $\obj{0}{i,j}\rightarrow\obj{w_r}{}[3]$ in odd degrees. This is relevant when computing the composition of morphisms, as we will see in \cref{LoopEndAlgebra}.
\end{rmk}

\subsection{Morphisms between $\obj{x}{}$'s and $\obj{y}{}$'s and $\obj{0}{}$'s}
\label{BMorphisms5}

For each $I$ we have that $\Hom^\bullet (\obj{x}{I}, \obj{0}{i,j})$ vanishes, which can be computed in the same way as in the maximally graded case. In order to compute the morphisms in the other direction, we argue again by Serre duality. In particular, we have that

 \begin{align}\label{ARDuality2}
\dim_\C \Hom^\bullet(\obj{x}{I},\obj{0}{i,j}(\vec{c}-\x-\y))=
\begin{cases}
1\qquad\text{if }I=i,\ \bullet=-3\\
0\qquad\text{otherwise},
\end{cases}
\end{align}
and similarly there is only one morphism $\Hom^\bullet(\obj{x}{J},\obj{0}{i,j}(\vec{c}-\x-\y))$ in degree negative three when $J\equiv j\bmod \frac{q-1}{\ell}$, and is zero otherwise. Note that the complexes computing cohomology do not vanish identically in the cases where we claim the cohomology is zero, but the complexes are exact. 
\begin{lem}
	In $\mathrm{HMF}(\C^2, \Gamma, \w)$ there are no morphisms from $\obj{x}{I}$ to $\obj{0}{i,j}$.  There are no morphisms in the other direction unless $I=i$, in which case the morphism space is spanned by $(0,0,\dots,0, 1)$ in degree $3$.  Similarly, the only morphisms between $\obj{y}{J}$ and $\obj{0}{i,j}$ are from the latter to the former, and are given by $(1,0,\dots,0)$ in degree $3$ when $j\equiv J\bmod \frac{q-1}{\ell}$. 
\end{lem}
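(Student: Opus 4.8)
The plan is to reduce the statement to two cohomology computations of the form $\mathrm{H}^\bullet\!\left(\Hom_{\gr-R}(K\otimes_S R, M')\right)$ via Buchweitz's result, exactly as in the preceding lemmas: one for morphisms $\obj{x}{I}\to\obj{0}{i,j}$ (and symmetrically $\obj{y}{J}\to\obj{0}{i,j}$), and one for the reverse direction. For the vanishing of $\Hom^\bullet(\obj{x}{I},\obj{0}{i,j})$, I would write down the complex computing these morphisms — the relevant module is $R((i+1)\x+(j+1)\y)/I_{i,j}$ — and observe that the factorisation of $\obj{x}{I}$ has differential given by multiplication by $x$ on one side and by $yw$ on the other. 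Because $x$ acts injectively on $R((i+1)\x+(j+1)\y)/I_{i,j}$ modulo the relevant submodule (the ideal $I_{i,j}$ is built so that $x$-multiplication only hits the monomials it should), the complex is exact, giving the claimed vanishing. The same argument works for $\obj{y}{J}\to\obj{0}{i,j}$ when $J\not\equiv j\bmod \tfrac{q-1}{\ell}$: in those cases the complex is exact, not identically zero, as noted in the paragraph preceding the statement.

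For the non-vanishing direction, I would invoke Serre duality \eqref{SerreDuality} in the form already used in \cref{BMorphisms4}, namely the dimension count \eqref{ARDuality2}: $\Hom^\bullet(\obj{x}{I},\obj{0}{i,j}(\vec{c}-\x-\y))$ is one-dimensional in degree $-3$ precisely when $I=i$, and vanishes otherwise; and analogously for $\obj{y}{J}$ with the congruence condition $J\equiv j\bmod \tfrac{q-1}{\ell}$. Dualising and applying the Serre functor, this forces $\Hom^\bullet(\obj{0}{i,j},\obj{x}{i})$ to be rank one, concentrated in cohomological degree $3$, and $\Hom^\bullet(\obj{0}{i,j},\obj{y}{J})$ likewise. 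It then remains to exhibit an explicit cocycle spanning each space. Using the presentation of $\obj{0}{i,j}$ with even differential $\diff_0$ from \eqref{EvenDifferential} (so that the complex computing $\Hom(\obj{0}{i,j},\obj{x}{i})$ has differential $\diff_0^T$ in odd degree and $\mathrm{Adj}(\diff_0)^T$ in even degree), one checks directly that $(0,0,\dots,0,1)$ lies in the kernel — the only nontrivial entry to verify is the bottom row of $\diff_0^T$, which is multiplication by $xy^{q-j}$ composed into the map to $\obj{x}{i}$, and this is precisely $x$ times a unit up to the matrix-factorisation relation — and that it is not a boundary, since a constant vector cannot be hit by a differential all of whose entries have positive internal degree. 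The computation for $\obj{y}{J}$ is symmetric, with spanning cocycle $(1,0,\dots,0)$, and the congruence $j\equiv J\bmod\tfrac{q-1}{\ell}$ arises because $\obj{y}{J}$ is only defined up to $(\tfrac{q-1}{\ell})\y$-shifts, and $(\tfrac{q-1}{\ell})\y = (\tfrac{p-1}{\ell})\x$ in $L$, so the internal degrees match exactly in this residue class.

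The main obstacle I anticipate is the bookkeeping around the internal $L$-gradings: one must be careful that the explicit cocycles above actually have the right degree as graded morphisms (cohomological degree $3$ after the shift $[3]$ built into the definition of $\mathcal{B}$), and that the congruence conditions in \eqref{ARDuality2} are correctly matched to when a degree-$0$ morphism in $\mathcal{B}$ exists. The torsion in $L$, and the identification $\tfrac{p-1}{\ell}\x=\tfrac{q-1}{\ell}\y$, is what makes the $\obj{y}{J}$ statement hold modulo $\tfrac{q-1}{\ell}$ rather than on the nose, and checking this carefully — rather than the exactness or Serre-duality steps, which are routine given the earlier lemmas — is where the real content lies. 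Once the cocycles are pinned down, the lemma follows immediately.
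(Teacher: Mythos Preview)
Your proposal follows essentially the same strategy as the paper: Buchweitz's formula for the vanishing direction, Serre duality via \eqref{ARDuality2} for the dimension count in the reverse direction, and then an explicit cocycle. The paper's proof is correspondingly brief, noting only that it follows the pattern of \cref{KijKwrHoms}.

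One small correction: your reason for the exactness of the complex computing $\Hom^\bullet(\obj{x}{I},\obj{0}{i,j})$ is not quite right. You claim $x$ acts injectively on $R/I_{i,j}$, but this is false in general --- the ideal $I_{i,j}$ contains a pure power $x^{i-(\ell-1-k)\frac{p-1}{\ell}}$, so $x$ kills the monomial one degree below. The paper's observation is the complementary one: it is multiplication by $y^\bullet$ that is exact (and symmetrically, for the $\obj{y}{J}$ case, multiplication by $x^\bullet$ is exact). This is the correct mechanism, and once you swap the roles of $x$ and $y$ in your exactness argument the rest of your proof goes through as written.
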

\begin{proof}
The proof follows the same strategy as the proof of \cref{KijKwrHoms}. Namely, the statement about the morphisms between $\obj{x}{I}$ and $\obj{0}{i,j}$ is immediate after observing that multiplication by $y^\bullet$ is exact, and the only morphism from $\obj{0}{i,j}$ to $\obj{x}{i}$ is in degree three. The statement about morphisms between $\obj{y}{J}$ and $\obj{0}{i,j}$ is proved similarly, although this time multiplication by $x^\bullet$ is exact. 
\end{proof}
\subsection{Morphisms between $\obj{0}{}$'s}
\label{BMorphisms6}
Computing morphisms $\obj{0}{i,j}\rightarrow\obj{0}{I,J}$ is analogous to the maximally graded case. Namely, morphisms are spanned by the module
\begin{align*}
\big(R/I_{I,J}\big)_{(I-i)\x+(J-j)\y}\ .
\end{align*} 
From this, it is immediate that there are no morphisms unless $J\geq j$; however, unlike in the maximally graded case, it is now possible to have $i>I$ since $\frac{p-1}{\ell}\x=\frac{q-1}{\ell}\y$.  Putting this together, we conclude:
\begin{lem}\label{StabOriginMorphisms}
For all $i\in\{p-\frac{p-1}{\ell}, \dots, p-1\}$ and $j\in\{1,\dots,q-1\}$, we have 

\begin{flalign*}
&&\Hom^{\bullet}(\obj{0}{i,j}, \obj{0}{I,J})\simeq &\begin{cases}
\text{span}_\C\{x^{I-i}y^{J-j}, \dots, x^{I-i+k\frac{p-1}{\ell}}y^{(J-j)\bmod \frac{q-1}{\ell}}\}& \text{if } I\geq i,\ J\geq j\\
& \text{and } \bullet=0\\
\text{span}_\C\{x^{I-i+\frac{p-1}{\ell}}y^{J-j-\frac{q-1}{\ell}},\dots, x^{I-i+k\frac{p-1}{\ell}}y^{(J-j)\bmod \frac{q-1}{\ell}}\}& \text{if }\ I< i,\\
&J\geq j+\frac{q-1}{\ell}\\
& \text{and }\bullet=0\\
0&\text{otherwise.}\hfill\qed
\end{cases} &
\end{flalign*}
\end{lem}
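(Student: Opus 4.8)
The plan is to follow the same recipe as the other $\Hom$-computations in this section. First I would apply Buchweitz's result (valid in every cohomological degree once we use $[2]\simeq(\vec{c})$ to pass to a high enough degree) to identify $\Hom^\bullet_{\mathrm{HMF}(\C^2, \Gamma, \w)}(\obj{0}{i,j},\obj{0}{I,J})$ with the cohomology of $\Hom_{\gr-R}(\obj{0}{i,j}\otimes_S R, M')$, where $\obj{0}{i,j}\otimes_S R$ is the $R$-free resolution of $M=R((i+1)\x+(j+1)\y)/I_{i,j}$ written out above and $M'=R((I+1)\x+(J+1)\y)/I_{I,J}$. In cohomological degree $0$ this group is just $\Hom_{\gr-R}(M,M')$, and the vanishing claimed in the other degrees follows from exactness of the relevant truncation of $\Hom_{\gr-R}(\obj{0}{i,j}\otimes_S R, M')$, exactly as for $\ell=1$ in \cite{HabermannSmith}.

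Next, since $M$ is a cyclic $R$-module, a degree-$0$ homomorphism $M\to M'$ is pinned down by the image of the generator, which has to be an element of $M'$ of the correct internal degree that is annihilated by $I_{i,j}$; concretely,
\begin{align*}
\Hom_{\gr-R}(M,M')\ \simeq\ \big(0 :_{R/I_{I,J}} I_{i,j}\big)_{(I-i)\x+(J-j)\y}\ \subseteq\ \big(R/I_{I,J}\big)_{(I-i)\x+(J-j)\y}.
\end{align*}
This is the precise meaning of the assertion that the morphisms are ``spanned by the module $(R/I_{I,J})_{(I-i)\x+(J-j)\y}$'', and it reduces the lemma to a monomial computation in this single graded component.

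The one genuinely new ingredient compared with the maximally graded case is the torsion in $L$. The relation $\frac{p-1}{\ell}\x=\frac{q-1}{\ell}\y$ means that the monomials of $S$ of a fixed $L$-degree form a ladder $\{x^{a_0+t\frac{p-1}{\ell}}y^{b_0-t\frac{q-1}{\ell}}\}$, with $t$ running over the interval on which both exponents stay non-negative; in particular this ladder can be non-empty even when $I<i$, provided $J\geq j+\frac{q-1}{\ell}$, which is the origin of the second case in the statement. Using the bounds $|I-i|<\frac{p-1}{\ell}$ and $|J-j|\leq q-2$ that come from the admissible ranges of $i,I$ and $j,J$, one checks that $\w=0$ imposes nothing on the degree $(I-i)\x+(J-j)\y$, so $R/(\w)$ agrees with $S$ there; quotienting by $I_{I,J}$ then deletes the topmost rungs of the ladder (its generators, ordered by $x$-degree, successively absorb them), and imposing annihilation by $I_{i,j}$ — governed by the degrees in which multiplication by the generators of $I_{i,j}$, chiefly $y^j$, lands back inside $I_{I,J}$ — cuts the set down further. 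Matching the two truncations against each other leaves exactly the windows in the statement, with $k=\lfloor(j-1)\ell/(q-1)\rfloor$ and the residue $(J-j)\bmod\frac{q-1}{\ell}$ recording where the surviving window ends, and the trichotomy recording whether, and from which rung, the ladder survives both truncations.

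I expect this last monomial bookkeeping to be the only real obstacle: one has to keep track of which rungs of the ladder survive each of the three operations (quotient by $\w$, quotient by $I_{I,J}$, annihilation by $I_{i,j}$) and check that the survivors are precisely those indexed above. The argument runs parallel to the $\ell=1$ computation in \cite{HabermannSmith}; the only substantive difference is that the torsion in $L$ now permits a non-zero answer in the range $I<i$, which accounts for the extra case.
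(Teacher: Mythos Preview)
Your proposal is correct and follows essentially the same approach as the paper. The paper's own argument is extremely terse: it simply asserts that the computation is analogous to the maximally graded case in \cite{HabermannSmith}, that the morphisms are spanned by $\big(R/I_{I,J}\big)_{(I-i)\x+(J-j)\y}$, and that the only new feature is that the torsion relation $\frac{p-1}{\ell}\x=\frac{q-1}{\ell}\y$ now permits $i>I$. You have unpacked this more carefully, in particular making explicit the annihilator condition $(0:_{R/I_{I,J}}I_{i,j})$ that is implicit in the paper's phrase ``spanned by the module''; in fact one checks in this situation that the annihilator condition is automatically satisfied on the relevant graded piece (as it is in the $\ell=1$ case), so that your extra truncation step does not remove anything and the answer is literally the full graded component, as the paper states.
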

\subsection{The total endomorphism algebra of the basic objects.}
From the results of the previous sections, we see that the objects in $\mathcal{B}$ are all exceptional. In fact, the endomorphism algebra of the objects in $\mathcal{B}$ are presented as a quiver-with-relations.
 
\begin{thm}
	\label{LoopEndAlgebra}
	The cohomology-level total endomorphism algebra of the objects of $\mathcal{B}$ is the algebra of the quiver-with-relations described in \cref{figLoopQuiver}, with all arrows living in degree zero.  In particular, $\mathcal{B}$ is a $\Z$-graded $A_\infty$-category concentrated in degree $0$, so is intrinsically formal.

	\begin{figure}[ht]
		\centering
		\begin{tikzpicture}[blob/.style={circle, draw=black, fill=black, inner sep=0, minimum size=\blobsize}, arrow/.style={->, shorten >=6pt, shorten <=6pt}, scale = 0.9]
		\def\blobsize{1.2mm}

		\draw (2, 0) node[blob]{};
		\draw (3.25, 0) node{\ $\cdots$};
		\draw (4.5, 0) node[blob]{};
		\draw (6, 0) node[blob]{};
		\draw [
		thick,
		decoration={
			brace,
			raise=0.5cm
		},
		decorate
		] (1.8,5.25) -- (4.7,5.25)
		node [pos=0.5,anchor=south,yshift=0.55cm] {$\frac{q-1}{\ell}$};
		
			\draw [
		thick,
		decoration={
			brace,
			raise=0.5cm
		},
		decorate
		] (6.25,3.75) -- (6.25,-.25)
		node [pos=0.5,anchor=west,xshift=0.55cm] {$\frac{p-1}{\ell}$};

		\draw[arrow] (2, 0) -- (3, 0)  node[midway,below]{y};
		\draw[arrow] (3.5, 0) -- (4.5, 0) node[midway,below]{y};
		\draw[arrow] (4.5, 0) -- (6, 0) node[midway,below]{$b$};

		\draw[arrow] (2, 0) -- (2, 1)  node[midway,right]{x};
		\draw[arrow] (4.5, 0) -- (4.5, 1)  node[midway,right]{x};
		
		\draw (6,-1) node{$\obj{x}{i}[3]$};
		
		\draw (6.25,5.25) node{$\obj{w_r}{}[3]$};
		
		\begin{scope}[yshift=1cm]
		\draw (2, 0) node[blob]{};
		\draw (3.25, 0) node{\ $\cdots$};
		\draw (4.5, 0) node[blob]{};
		\draw (6, 0) node[blob]{};

		\draw[arrow] (2, 0) -- (3, 0) node[midway,above]{y};
		\draw[arrow] (3.5, 0) -- (4.5, 0) node[midway,above]{y};
		\draw[arrow] (4.5, 0) -- (6, 0) node[midway,below]{$b$};

		\draw[arrow] (2, 0) -- (2, 1)  node[midway,right]{x};
		\draw[arrow] (4.5, 0) -- (4.5, 1)  node[midway,right]{x};
		
		\end{scope}
		
		\begin{scope}[yshift=3.5cm]

		\draw (2, 0) node[blob]{};
		\draw (3.25, 0) node{\ $\cdots$};
		\draw (4.5, 0) node[blob]{};
		\draw (6, 0) node[blob]{};

		\draw[arrow] (2, 0) -- (3, 0) node[midway,above]{y};
		\draw[arrow] (3.5, 0) -- (4.5, 0) node[midway,above]{y};
		\draw[arrow] (4.5, 0) -- (6, 0) node[midway,below]{$b$};

		\draw[arrow] (2, 0) -- (2, 1.5)  node[midway,left]{$a$};
		\draw[arrow] (4.5,0) -- ({4.5+2*cos(360/14)},{2*sin(360/14)} ) node[pos=0.6,above]{$c_\ell$};
		\draw[arrow] (4.5,0) -- ({4.5+2*cos(5*360/28)},{2*sin(5*360/28)} ) node[pos=0.6,right]{$c_1$};
		\draw[arrow] (4.5, 0) -- (4.5, 1.5) node[midway,left]{$a$};
		\end{scope}
		
		\begin{scope}[yshift=5cm]
		\draw (2, 0) node[blob]{};
		\draw (3.25, 0) node{\ $\cdots$};
		\draw (4.5, 0) node[blob]{};

		\end{scope}
		\begin{scope}[yshift=3.5cm, xshift=4.5cm]
		\foreach \a in {2,5}{
			\draw (\a*360/28: 2) node[blob]{};
		}
			\foreach \a in {1,2,3}{
		\draw[fill] (\a*135/14+360/14: 2) circle[radius=1pt];
	}

		\end{scope}
		
		\draw (2, 2.37) node{$\vdots$};
		\draw (4.5, 2.37) node{$\vdots$};
		\draw (6, 2.37) node{$\vdots$};
		
		\begin{scope}[yshift=2.5cm]
		\draw[arrow] (2, 0) -- (2, 1) node[midway,right]{x};
		\draw[arrow] (4.5, 0) -- (4.5, 1) node[midway,right]{x};
		\end{scope}
		
		\draw (3.25, 2.37) node{\ $\iddots$};
		
		\draw[line width=0.5mm, opacity=0.2] (1.5, -0.5) rectangle (5, 4);
		\draw[line width=0.5mm, opacity=0.2] (1.5, 4.5) rectangle (4.9, 5.5);
		\draw[line width=0.5mm, opacity=0.2] (5.5, -0.5) rectangle (6.5, 4);
		
		\begin{scope}[xshift=-3.5cm]
		\draw (2, 0) node[blob]{};
		\draw (3.25, 0) node{\ $\cdots$};
		\draw (4.5, 0) node[blob]{};

		\draw (4.25,5) node{$\obj{y}{j}[3]$};
		
		\draw[arrow] (2, 0) -- (3, 0)  node[midway,below]{y};
		\draw[arrow] (3.5, 0) -- (4.5, 0) node[midway,below]{y};
		\draw[arrow] (4.5, 0) -- (5.5, 0) node[pos=0.65,below]{y};

		\draw[arrow] (2, 0) -- (2, 1)  node[midway,right]{x};
		\draw[arrow] (4.5, 0) -- (4.5, 1)  node[midway,left]{x};
		
		\begin{scope}[yshift=1cm]
		\draw (2, 0) node[blob]{};
		\draw (3.25, 0) node{\ $\cdots$};
		\draw (4.5, 0) node[blob]{};

		\draw[arrow] (2, 0) -- (3, 0) node[midway,above]{y};
		\draw[arrow] (3.5, 0) -- (4.5, 0) node[midway,below]{y};
		\draw[arrow] (4.5, 0) -- (5.5, 0) node[pos=0.65,above]{y};

		\draw[arrow] (2, 0) -- (2, 1)  node[midway,right]{x};
		\draw[arrow] (4.5, 0) -- (4.5, 1)  node[midway,right]{x};
		\end{scope}
		
		\begin{scope}[yshift=3.5cm]
		
		\draw (2, 0) node[blob]{};
		\draw (3.25, 0) node{\ $\cdots$};
		\draw (4.5, 0) node[blob]{};

		\draw[arrow] (2, 0) -- (3, 0) node[midway,above]{y};
		\draw[arrow] (3.5, 0) -- (4.5, 0) node[midway,above]{y};
		\draw[arrow] (4.5, 0) -- (5.5, 0) node[pos=0.65,above]{y};

		\draw[arrow] (4.5, 0) -- (8, -3.5) node[midway,above]{x};
		\draw[arrow] (2, 0) -- (5.5, -3.5) node[midway,above]{x};
		
		\end{scope}

		\draw (2, 2.37) node{$\vdots$};
		\draw (4.5, 2.37) node{$\vdots$};

		\begin{scope}[yshift=2.5cm]
		\draw[arrow] (2, 0) -- (2, 1) node[midway,right]{x};
		\draw[arrow] (4.5, 0) -- (4.5, 1) node[midway,right]{x};
		\end{scope}
		
		\draw (3.25, 2.37) node{\ $\iddots$};
		
		\draw[line width=0.5mm, opacity=0.2] (1.5, -0.5) rectangle (5, 4);

		\end{scope}

		\begin{scope}[xshift=-7cm]

\draw[arrow] (4.5, 0) -- (5.5, 0) node[pos=0.65,below]{y};
\draw (2.25,0) node{$\cdots$};
\draw (4.5,0) node{$\cdots$};
\draw (3.375,0) node{$\cdots$};
\draw (3.25, 2.37) node{\ $\iddots$};
		\draw (2, 2.37) node{$\vdots$};
\draw (4.5, 2.37) node{$\vdots$};
\begin{scope}[yshift=1cm]

\draw[arrow] (4.5, 0) -- (5.5, 0) node[pos=0.65,above]{y};
\draw (2.25,0) node{$\cdots$};
\draw (4.5,0) node{$\cdots$};
\draw (3.375,0) node{$\cdots$};
\end{scope}

\begin{scope}[yshift=3.5cm]
\draw (2.25,0) node{$\cdots$};
\draw (4.5,0) node{$\cdots$};
\draw (3.375,0) node{$\cdots$};
\draw[arrow] (4.5, 0) -- (5.5, 0) node[pos=0.65,above]{y};
\draw[arrow] (2, 0) -- (5.5, -3.5) node[midway,above]{x};
\draw[arrow] (4.5, 0) -- (8, -3.5) node[midway,above]{x};

\end{scope}

\end{scope}

				\begin{scope}[xshift=-10.5cm]
		\draw (2, 0) node[blob]{};
		\draw (3.25, 0) node{\ $\cdots$};
		\draw (4.5, 0) node[blob]{};

		\draw[arrow] (2, 0) -- (3, 0)  node[midway,below]{y};
		\draw[arrow] (3.5, 0) -- (4.5, 0) node[midway,below]{y};
		\draw[arrow] (4.5, 0) -- (5.5, 0) node[pos=0.65,below]{y};

		\draw[arrow] (2, 0) -- (2, 1)  node[midway,right]{x};
		\draw[arrow] (4.5, 0) -- (4.5, 1)  node[midway,left]{x};
		
		\begin{scope}[yshift=1cm]
		\draw (2, 0) node[blob]{};
		\draw (3.25, 0) node{$\cdots$};
		\draw (4.5, 0) node[blob]{};

		\draw[arrow] (2, 0) -- (3, 0) node[midway,above]{y};
		\draw[arrow] (3.5, 0) -- (4.5, 0) node[midway,below]{y};
		\draw[arrow] (4.5, 0) -- (5.5, 0) node[pos=0.65,above]{y};

		\draw[arrow] (2, 0) -- (2, 1)  node[midway,right]{x};
		\draw[arrow] (4.5, 0) -- (4.5, 1)  node[midway,right]{x};
		\end{scope}
		
		\begin{scope}[yshift=3.5cm]
		
		\draw (2, 0) node[blob]{};
		\draw (3.25, 0) node{\ $\cdots$};
		\draw (4.5, 0) node[blob]{};

		\draw[arrow] (2, 0) -- (3, 0) node[midway,above]{y};
		\draw[arrow] (3.5, 0) -- (4.5, 0) node[midway,above]{y};
		\draw[arrow] (4.5, 0) -- (5.5, 0) node[pos=0.65,above]{y};

		\draw[arrow] (2, 0) -- (5.5, -3.5) node[midway,above]{x};
		\draw[arrow] (4.5, 0) -- (8, -3.5) node[midway,above]{x};
		\draw (2,1) node{$\obj{0}{i,j}$};
		\end{scope}

		\draw (2, 2.37) node{$\vdots$};
		\draw (4.5, 2.37) node{$\vdots$};

		\begin{scope}[yshift=2.5cm]
		\draw[arrow] (2, 0) -- (2, 1) node[midway,right]{x};
		\draw[arrow] (4.5, 0) -- (4.5, 1) node[midway,right]{x};
		\end{scope}
		
		\draw (3.25, 2.37) node{\ $\iddots$};
		
		\draw[line width=0.5mm, opacity=0.2] (1.5, -0.5) rectangle (5, 4);
		
		\end{scope}

		\begin{scope}[yshift=-50,xshift=-110]
		\draw (0,0) node{\parbox{300pt}{\small \ \textbf{Relations:} \begin{enumerate}[(i)]\item $xy=yx$\item $ay=bx=0$ \item $c_r(x^{\frac{p-1}{\ell}}-e^{\frac{\pi i}{\ell}}\eta^ry^{\frac{q-1}{\ell}})=0$\end{enumerate}}};
		\end{scope}
		\end{tikzpicture}
		\caption{The quiver describing the category $\mathcal{B}$ for loop polynomials. There are $\ell$ blocks of size $\frac{q-1}{\ell}\times\frac{p-1}{\ell}$.\label{figLoopQuiver}}
	\end{figure}
\end{thm}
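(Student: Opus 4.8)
The plan is to collect the morphism computations of \cref{BMorphisms4}, \cref{BMorphisms5} and \cref{BMorphisms6}, together with the orthogonality lemmas, into a single quiver-with-relations. Since every object of $\mathcal{B}$ is exceptional, each self-Hom space is $\C\cdot\mathrm{id}$, and I take the vertices of the quiver to be the $\frac{pq-1}{\ell}+\ell$ objects of $\mathcal{B}$. The shifts by $[3]$ in the definition of $\mathcal{B}$ are chosen precisely so that the degree-three morphisms out of the $\obj{0}{i,j}$ produced in \cref{BMorphisms4}, \cref{BMorphisms5} and \cref{KijKwrHoms} become morphisms of cohomological degree zero; since the only other nonzero Hom-spaces — those among the $\obj{0}{i,j}$ themselves, computed in \cref{BMorphisms6} — already sit in degree zero, this shows at once that the total endomorphism algebra is concentrated in cohomological degree zero.

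Next I would identify the arrows. Among all morphisms $\obj{0}{i,j}\to\obj{0}{I,J}$, which by \cref{BMorphisms6} are spanned by the indicated monomials in $R/I_{I,J}$, the ones not factoring as a composite of two non-isomorphisms are multiplication by $x$ and multiplication by $y$; these give the vertical and horizontal arrows of Figure \ref{figLoopQuiver}. Because $\frac{p-1}{\ell}\x=\frac{q-1}{\ell}\y$ in $L$, the $x$-arrows wrap around the $\frac{p-1}{\ell}\times(q-1)$ grid of $\obj{0}{i,j}$'s, with the top of each column reconnecting to its bottom but shifted $\frac{q-1}{\ell}$ steps in the $y$-direction; this skew identification is the one genuinely new combinatorial feature relative to the maximally graded case treated in \cite{HabermannSmith}. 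The rank-one spaces $\Hom^3(\obj{0}{i,j},\obj{x}{i})$, $\Hom^3(\obj{0}{i,j},\obj{y}{J})$ for $J\equiv j\bmod\frac{q-1}{\ell}$, and $\Hom^3(\obj{0}{i,j},\obj{w_r}{})$ then supply the arrows $b$, $a$ and $c_1,\dots,c_\ell$ respectively. That the arrows together with the identities generate every morphism space is visible from the computations: each spanning monomial $x^ay^b$ of a $\Hom(\obj{0}{i,j},\obj{0}{I,J})$ is the composite of $a$ copies of the $x$-arrow with $b$ copies of the $y$-arrow, and — using that the $\obj{x}{}$, $\obj{y}{}$ and $\obj{w_r}{}$ are exceptional — every degree-three morphism to one of them is the composite of such a path with the relevant $a$, $b$ or $c_r$.

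Third, I would verify the relations. Commutativity $xy=yx$ is immediate, since composition of these morphisms is multiplication in the commutative ring $R/I_{I,J}$. The relations $ay=0$ and $bx=0$ hold because $\obj{y}{j}$ (resp. $\obj{x}{i}$) receives a nonzero morphism only from those $\obj{0}{i',j'}$ with $j'\equiv j\bmod\frac{q-1}{\ell}$ (resp. $i'=i$), so the putative composite factors through a vanishing Hom-space. Finally, relation $(iii)$ — which asserts $c_rw_r=0$, and makes sense because the two monomials comprising $w_r$ are parallel paths between the same pair of $\obj{0}{}$-vertices — is a direct consequence of the explicit generator of $\Hom^3(\obj{0}{},\obj{w_r}{})$ exhibited in \cref{KijKwrHoms}, reflecting the fact that $w_r$ is a factor of $\w$ and hence annihilates the module underlying $\obj{w_r}{}$.

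The one substantive point, and the main obstacle, is \emph{completeness}: showing that the endomorphism algebra is the full quotient of the path algebra of Figure \ref{figLoopQuiver} by relations $(i)$--$(iii)$, and not a proper quotient of it. I would argue by a dimension count, checking that $\dim_\C A=\sum_{X,Y\in\mathcal{B}}\dim_\C\Hom^\bullet(X,Y)$, where the right-hand side is known term-by-term from the lemmas above and the left-hand side is computed combinatorially by enumerating paths modulo relations: between $\obj{0}{}$-vertices these are the reduced monomials surviving in the relevant graded piece of $R/I_{I,J}$, whose count is pinned down by the shape of the ideals $I_{i,j}$; to an $\obj{x}{}$- or $\obj{y}{}$-vertex one post-composes a single $b$ or $a$; and to an $\obj{w_r}{}$-vertex one post-composes a single $c_r$, with relation $(iii)$ precisely ensuring that $c_1,\dots,c_\ell$ together contribute exactly the $\ell$-dimensional space $\bigoplus_r\Hom^3(\obj{0}{},\obj{w_r}{})$ at the appropriate vertex and nothing more. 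Making the wraparound of the $x$-arrows consistent with the truncation imposed by the $I_{i,j}$ is where the bookkeeping lies; once the torsion is accounted for it proceeds as in \cite{HabermannSmith}, and \cref{ChoicesRMK} records that the asymmetric choice of the $I_{i,j}$ does not affect the conclusion. With this established, formality is automatic: the cohomology algebra is concentrated in degree zero, so any higher product $\mu^{k}$ with $k\geq 3$, which lowers cohomological degree by $k-2$, must vanish; hence $\mathcal{B}$ is intrinsically formal and quasi-equivalent to the algebra of Figure \ref{figLoopQuiver} with zero differential.
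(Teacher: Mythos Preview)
Your overall approach mirrors the paper's: assemble the Hom computations from the preceding subsections, read off the arrows, verify the relations, and conclude formality from concentration in degree zero. The paper is terser than you are --- it does not spell out the dimension count for completeness that you sketch, treating it as implicit in the fact that all Hom-spaces have already been exhibited with explicit bases --- but the architecture is the same.

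The one point where your treatment diverges from the paper is the verification of relation~(iii). You say it ``is a direct consequence of the explicit generator of $\Hom^3(\obj{0}{},\obj{w_r}{})$ exhibited in \cref{KijKwrHoms}, reflecting the fact that $w_r$ is a factor of $\w$ and hence annihilates the module underlying $\obj{w_r}{}$.'' That heuristic is suggestive but not a proof: the morphisms $x^{(p-1)/\ell}$ and $y^{(q-1)/\ell}$ are chain maps between distinct $\obj{0}{}$-factorisations, not module-level endomorphisms of $R/(w_r)$, so the annihilation statement does not literally apply. What is needed is to lift multiplication by $x^{(p-1)/\ell}$ and by $y^{(q-1)/\ell}$ to explicit maps of matrix factorisations $\obj{0}{p-1,(\ell-1)\frac{q-1}{\ell}}\to\obj{0}{p-1,q-1}$ and then compose with the column vector $c_r$ from \cref{KijKwrHoms}. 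The paper does precisely this: in odd degree the lift of $x^{(p-1)/\ell}$ is the $(\ell+1)\times\ell$ matrix $\bigl(\begin{smallmatrix}I_\ell\\0\end{smallmatrix}\bigr)$ and that of $y^{(q-1)/\ell}$ is $\bigl(\begin{smallmatrix}0\\I_\ell\end{smallmatrix}\bigr)$, after which the relation $c_r\,x^{(p-1)/\ell}=e^{\pi i/\ell}\eta^r\,c_r\,y^{(q-1)/\ell}$ is a one-line check against the geometric progression in the entries of $c_r$. Filling in that lift is the only substantive step your proposal leaves implicit.
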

\begin{rmk}
Before moving on to the proof, we remark that the quiver for maximally graded loop polynomials, as studied in \cite[Theorem 2.13]{HabermannSmith}, is on-the-nose \cref{figLoopQuiver} with $\ell=1$. In particular, the relations (iii) becomes vacuous in this case. Foreshadowing \cref{DeformationsSection}, it is these relations which get deformed when looking at deformations of the algebra.
\end{rmk}
\begin{proof}[Proof of \cref{LoopEndAlgebra}]
In order to prove the statement, all that is required is to show that the morphisms of matrix factorisations above compose in the claimed way. The first and second relations follow from the maximally graded case, \cite[Theorem 2.13]{HabermannSmith}, \textit{mutatis mutandis}. The third relation is not present in the maximally graded case, although checking that it holds is similarly straightforward by composing generators. Namely, consider 
\begin{align*}
x^{\frac{p-1}{\ell}},\ y^{\frac{q-1}{\ell}}&\in\Hom^0(\obj{0}{p-1,(\ell-1)\frac{q-1}{\ell}},\obj{0}{p-1,q-1}),\\
c_r&\in\Hom^0(\obj{0}{p-1,q-1},\obj{w_r}{}[3]).
\end{align*}
Following on from \cref{MorphismsRMK}, we compute the composition of morphisms in odd degree\footnote{We mean morphisms between the odd degrees of the matrix factorisations, not that the morphism is in odd degree. Indeed, all computations are for morphisms in degree zero.} to demonstrate the relationship. In particular, the morphism $c_r$ is represented in these degrees by (a scalar multiple) of the transpose of the vector spanning the Hom-space in \cref{KijKwrHoms} after substituting $i=p-1$, $j=q-1$. It is a straightforward exercise to show that the morphism of matrix factorisations corresponding to $x^{\frac{p-1}{\ell}}$ is given by the $(\ell+1)\times \ell$ matrix
\begin{align*}
\begin{pmatrix}
1  & 0& \dots & 0 \\
0 & 1 &\dots & 0\\
\vdots  &\vdots &\ddots  &\vdots \\
0  & 0 & \dots & 1\\
0  & 0 & \dots & 0
\end{pmatrix}
\end{align*}
in odd degrees, and similarly the morphism $y^{\frac{q-1}{\ell}}$ is given in odd degrees by the matrix of the same form as above with a row of zeros in the first row and then the $\ell\times\ell$ identity matrix underneath. It is then immediate that the third relation holds. 
\end{proof}
\begin{rmk}\label{ChoicesRMK2}
Following on from \cref{ChoicesRMK}, it is here that it becomes evident that the resulting category is independent of the choice of representatives for $\obj{0}{i,j}$. In particular, making different choices would lead to rearranging the blocks in \cref{figLoopQuiver}, which is evidently the same quiver. 
For example, below is the quiver for an alternative collection of $I_{i,j}$ for $\ell=2$. Whilst the modules in the collection change, the resulting quiver-with-relations is the same, and can be seen to be a rearrangement of that of \cref{figLoopQuiver} in the case of $\ell=2$.
	\begin{figure}[H]
	\centering
	\begin{tikzpicture}[blob/.style={circle, draw=black, fill=black, inner sep=0, minimum size=\blobsize}, arrow/.style={->, shorten >=6pt, shorten <=6pt}, scale=0.9]
	\def\blobsize{1.2mm}

	\draw (2, 0) node[blob]{};
	\draw (3.25, 0) node{\ $\cdots$};
	\draw (4.5, 0) node[blob]{};
	\draw (6, 0) node[blob]{};
	
	\draw[arrow] (2, 0) -- (3, 0)  node[midway,below]{$y$};
	\draw[arrow] (3.5, 0) -- (4.5, 0) node[midway,below]{$y$};
	\draw[arrow] (4.5, 0) -- (6, 0) node[midway,below]{$b$};

	\draw[arrow] (2, 0) -- (2, 1)  node[midway,left]{$x$};
	\draw[arrow] (4.5, 0) -- (4.5, 1)  node[midway,right]{$x$};
	
	\draw (6,-1) node{$\obj{x}{i}[3]$};
	
	\draw (6.25,5.25) node{$\obj{w_r}{}[3]$};
		\draw [
	thick,
	decoration={
		brace,
		raise=0.5cm
	},
	decorate
	] (1.8,5.25) -- (4.7,5.25)
	node [pos=0.5,anchor=south,yshift=0.55cm] {$\frac{q-1}{\ell}$};
	
	\draw [
	thick,
	decoration={
		brace,
		raise=0.5cm
	},
	decorate
	] (6.25,3.75) -- (6.25,-.25)
	node [pos=0.5,anchor=west,xshift=0.55cm] {$\frac{p-1}{\ell}$};
	
	\begin{scope}[yshift=1cm]
	\draw (2, 0) node[blob]{};
	\draw (3.25, 0) node{\ $\cdots$};
	\draw (4.5, 0) node[blob]{};
	\draw (6, 0) node[blob]{};

	\draw[arrow] (2, 0) -- (3, 0) node[midway,above]{$y$};
	\draw[arrow] (3.5, 0) -- (4.5, 0) node[midway,above]{$y$};
	\draw[arrow] (4.5, 0) -- (6, 0) node[midway,below]{$b$};

	\draw[arrow] (2, 0) -- (2, 1)  node[midway,right]{$x$};
	\draw[arrow] (4.5, 0) -- (4.5, 1)  node[midway,right]{$x$};
	
	\end{scope}
	
	\begin{scope}[yshift=3.5cm]
	
	\draw (2, 0) node[blob]{};
	\draw (3.25, 0) node{\ $\cdots$};
	\draw (4.5, 0) node[blob]{};
	\draw (6, 0) node[blob]{};

	\draw[arrow] (2, 0) -- (3, 0) node[midway,above]{$y$};
	\draw[arrow] (3.5, 0) -- (4.5, 0) node[midway,above]{$y$};
	\draw[arrow] (4.5, 0) -- (6, 0) node[midway,below]{$b$};

	\draw[arrow] (2, 0) -- (2, 1.5)  node[midway,left]{$a$};
	\draw[arrow] (4.5,0) -- ({4.5+2*cos(360/14)},{2*sin(360/14)} ) node[pos=0.6,above]{$c_\ell$};
	\draw[arrow] (4.5,0) -- ({4.5+2*cos(5*360/28)},{2*sin(5*360/28)} ) node[pos=0.6,right]{$c_1$};
	\draw[arrow] (4.5, 0) -- (4.5, 1.5) node[midway,left]{$a$};
	\end{scope}
	
	\begin{scope}[yshift=5cm]
	\draw (2, 0) node[blob]{};
	\draw (3.25, 0) node{\ $\cdots$};
	\draw (4.5, 0) node[blob]{};

	\end{scope}
	\begin{scope}[yshift=3.5cm, xshift=4.5cm]
	\foreach \a in {2,5}{
		\draw (\a*360/28: 2) node[blob]{};
	}
	\foreach \a in {1,2,3}{
		\draw[fill] (\a*135/14+360/14: 2) circle[radius=1pt];
	}
	
	\end{scope}
	
	\draw (2, 2.37) node{$\vdots$};
	\draw (4.5, 2.37) node{$\vdots$};
	\draw (6, 2.37) node{$\vdots$};
	
	\begin{scope}[yshift=2.5cm]
	\draw[arrow] (2, 0) -- (2, 1) node[midway,left]{$x$};
	\draw[arrow] (4.5, 0) -- (4.5, 1) node[midway,right]{$x$};
	\end{scope}
	
	\draw (3.25, 2.37) node{\ $\iddots$};
	
	\draw[line width=0.5mm, opacity=0.2] (1.5, -0.5) rectangle (5, 4);
	\draw[line width=0.5mm, opacity=0.2] (1.5, 4.5) rectangle (4.9, 5.5);
	\draw[line width=0.5mm, opacity=0.2] (5.5, -0.5) rectangle (6.5, 4);
		\draw (.75,5) node{$\obj{y}{j}[3]$};
	\begin{scope}[yshift=-4.5cm]
	\draw (2, 0) node[blob]{};
	\draw (3.25, 0) node{\ $\cdots$};
	\draw (4.5, 0) node[blob]{};

	\draw[arrow] (2, 0) -- (3, 0)  node[midway,below]{$y$};
	\draw[arrow] (3.5, 0) -- (4.5, 0) node[midway,below]{$y$};
	\draw[arrow] (4.5, 0) -- (2, 4.5) node[midway,above]{$y$};

	\draw[arrow] (2, 0) -- (2, 1)  node[midway,right]{$x$};
	\draw[arrow] (4.5, 0) -- (4.5, 1)  node[midway,right]{$x$};
	
	\begin{scope}[yshift=1cm]
	\draw (2, 0) node[blob]{};
	\draw (3.25, 0) node{\ $\cdots$};
	\draw (4.5, 0) node[blob]{};

	\draw[arrow] (2, 0) -- (3, 0) node[midway,above]{$y$};
	\draw[arrow] (3.5, 0) -- (4.5, 0) node[midway,above]{$y$};
	\draw[arrow] (4.5, 0) -- (2, 4.5) node[pos=.35,above]{$y$};

	\draw[arrow] (2, 0) -- (2, 1)  node[midway,right]{$x$};
	\draw[arrow] (4.5, 0) -- (4.5, 1)  node[midway,right]{$x$};
	\end{scope}
	
	\begin{scope}[yshift=3.5cm]
	
	\draw (2, 0) node[blob]{};
	\draw (3.25, 0) node{\ $\cdots$};
	\draw (4.5, 0) node[blob]{};
	
	\draw[arrow] (2, 0) -- (3, 0) node[midway,above]{$y$};
	\draw[arrow] (3.5, 0) -- (4.5, 0) node[midway,above]{$y$};
	\draw[arrow] (4.5, 0) -- (2, 4.5) node[midway,above]{$y$};
	
	\draw[arrow] (4.5, 0) -- (4.5, 1) node[pos=0.35,right]{$x$};
	\draw[arrow] (2, 0) -- (2, 1) node[pos=0.35,left]{$x$};
	
	\end{scope}
	
	\draw (2, 2.37) node{$\vdots$};
	\draw (4.5, 2.37) node{$\vdots$};

	\begin{scope}[yshift=2.5cm]
	\draw[arrow] (2, 0) -- (2, 1) node[midway,right]{$x$};
	\draw[arrow] (4.5, 0) -- (4.5, 1) node[midway,right]{$x$};
	\end{scope}
	
	\draw (3.25, 2.37) node{\ $\iddots$};
	
	\draw[line width=0.5mm, opacity=0.2] (1.5, -0.5) rectangle (5, 4);
	
	\end{scope}
	
			\begin{scope}[yshift=-50,xshift=50]
	\draw (0,0) node{\parbox{300pt}{\small \ \textbf{Relations:} \begin{enumerate}[(i)]\item $xy=yx$\item $ay=bx=0$ \item $c_r(x^{\frac{p-1}{\ell}}-e^{\frac{\pi i}{\ell}}\eta^ry^{\frac{q-1}{\ell}})=0$\end{enumerate}}};
	\end{scope}
	\end{tikzpicture}
			\caption{Quiver corresponding to an alternative collection of $I_{i,j}$ for $\ell=2$. Whilst we have considered different objects, it is clear that the resulting category is the same, since the quiver is manifestly a rearrangement of \cref{LoopEndAlgebra} for $\ell=2$.  \label{AlternativeQuiver}}
\end{figure}
\end{rmk}
\begin{ex}
Consider the polynomial $x^py+y^px$ with $\ell=p-1$. Then, the corresponding quiver is given by
\begin{figure}[H]

	\begin{tikzpicture}[blob/.style={circle, draw=black, fill=black, inner sep=0, minimum size=\blobsize}, arrow/.style={->, shorten >=6pt, shorten <=6pt}]
	\def\blobsize{1.2mm}

	\draw (-3, 0) node[blob]{};
	\draw (-1.5, 0) node[blob]{};
	\draw (0, 0) node{$\cdots$};
	\draw (1.5, 0) node[blob]{};
	\draw (3, 0) node[blob]{};
	\draw ({3+1.5*cos(-60)}, {1.5*sin(-60)}) node[blob]{};
	\draw ({3+1.5*cos(60)}, {1.5*sin(60)}) node[blob]{};
	
	\begin{scope}[xshift=3cm]
			\foreach \a in {1,4}{
		\draw (-60+\a*120/5: 1.5) node[blob]{};
	}
	\foreach \a in {1,2,3}{
		\draw[fill] (-36+\a*18: 1.5) circle[radius=1pt];
	}
	\end{scope}

	\draw[arrow,yshift=0.5ex] (-3,0) -- (-1.5,0) node[midway,above]{$x$};
	\draw[arrow,yshift=-0.5ex] (-3,0) -- (-1.5,0) node[midway,below]{$y$};
	
	\draw[arrow,yshift=0.5ex, shorten >=2ex] (-1.5,0) -- (0,0) node[midway,above]{$x$};
	\draw[arrow,yshift=-0.5ex, shorten >=2ex] (-1.5,0) -- (0,0) node[midway,below]{$y$};
	
	\draw[arrow,yshift=0.5ex, shorten <=1.5ex] (0,0) -- (1.5,0) node[midway,above]{$x$};
	\draw[arrow,yshift=-0.5ex, shorten <=1.5ex] (0,0) -- (1.5,0) node[midway,below]{$y$};
	
	\draw[arrow,yshift=0.5ex] (1.5,0) -- (3,0) node[midway,above]{$x$};
	\draw[arrow,yshift=-0.5ex] (1.5,0) -- (3,0) node[midway,below]{$y$};
	
	\draw[arrow] (3,0) -- ({3+1.5*cos(60)}, {1.5*sin(60)})  node[midway,left]{$a$};
	\draw[arrow] (3,0) -- ({3+1.5*cos(-60)}, {1.5*sin(-60)})  node[midway,left]{$b  $};
	
	\draw[arrow] (3,0) -- ({3+1.5*cos(36)},{1.5*sin(36)}) node[pos=0.6,below]{$c_1$};
	\draw[arrow] (3,0) -- ({3+1.5*cos(-36)},{1.5*sin(-36)}) node[pos=0.6,above]{$c_{p-1}$};

\begin{scope}[xshift=11cm]
\draw (0,0) node{\parbox{300pt}{\small \ \textbf{Relations:} \begin{enumerate}[(i)]\item $xy=yx$\item $ay=bx=0$ \item $c_r(x-e^{\frac{\pi i}{p-1}}\eta^ry)=0$\end{enumerate}}};
\end{scope}
\end{tikzpicture}
\caption{Quiver corresponding to $x^py+y^px$.}
\end{figure}

This corresponds to a tilting module of the $\Z$-graded ring $\C[x,y]/(x^py+y^px)$ with $|x|=|y|=1$, and is a special case of \cite[Theorem 2.1]{BIY}.
\end{ex}

\subsection{Generation}
\label{BGeneration}

Now that we have characterised the full subcategory $\mathcal{B}\subseteq \mathrm{HMF}(\C^2,\Gamma,\w)$, we aim to show that it generates. To this end, we utilise the following result of Polishchuk-Vaintrob: 
\begin{lem}[{\cite[Proposition 2.3.1]{PolishVainCFT},} cf. {\cite[Corollary 5.3]{Dyckerhoff}}]
	\label{PVGeneration}
	The category $\mathrm{HMF}(\C^2, \Gamma, \w)$ is split-generated by the $L$-grading shifts of the stabilisation of the module $R/(x, y)$.\hfill$\qed$
\end{lem}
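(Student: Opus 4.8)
This lemma is an $L$-graded version of results of Polishchuk--Vaintrob and Dyckerhoff which we take as quoted; for completeness, here is the argument one would give to prove it directly. By the Buchweitz--Orlov-type equivalences recalled above we have $\mathrm{HMF}(\C^2,\Gamma,\w)\simeq D^b_{\mathrm{sg}}([\w^{-1}(0)/\Gamma])\simeq D^b_{\mathrm{sg}}(\gr-R)$, and under these the stabilisation of a module corresponds to its image in the singularity category; since $R/(x,y)=R/\mathfrak m=\C=:\mathbf{k}$, it is therefore equivalent to prove that the $L$-graded shifts $\mathbf{k}(\ell)$, $\ell\in L$, split-generate $D^b_{\mathrm{sg}}(\gr-R)$. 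As $D^b_{\mathrm{sg}}(\gr-R)$ is the Verdier quotient of $D^b(\gr-R)$ by the perfect complexes, and $D^b(\gr-R)$ is generated by the finitely generated graded $R$-modules (each bounded complex being an iterated cone of shifts of its cohomology modules), it suffices to show that every finitely generated graded $R$-module lies in the thick subcategory closed under direct summands generated by $\{\mathbf{k}(\ell)\}_{\ell\in L}$; this last clause is harmless because $D^b_{\mathrm{sg}}(\gr-R)\simeq\underline{\mathrm{MCM}}(R)$ is idempotent complete.

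First I would reduce to maximal Cohen--Macaulay modules and to finite-length modules. For a finitely generated graded module $M$, the short exact sequence $0\to\Gamma_{\mathfrak m}(M)\to M\to M/\Gamma_{\mathfrak m}(M)\to 0$, with $\Gamma_{\mathfrak m}(M)$ the $\mathfrak m$-power-torsion submodule, gives a triangle in $D^b_{\mathrm{sg}}(\gr-R)$ whose outer terms are a finite-length module and a module of positive depth -- and the latter, since $R$ has Krull dimension one, is MCM. The finite-length case is immediate: $R$ is graded-local with $R_0=\C$, so a finite-length graded module has a graded composition series with subquotients among the $\mathbf{k}(\ell)$, and the associated short exact sequences place it in the thick subcategory generated by the $\mathbf{k}(\ell)$.

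The substantive step is the MCM case, where the isolated-singularity hypothesis enters. Since $\w$ has an isolated singularity it is reduced, so $R$ is a reduced graded ring of Krull dimension one which is regular on $U=\Spec R\setminus\{\mathfrak m\}$; hence any MCM module $M$ is locally free on $U$. If $M$ has the same rank $r$ along every minimal prime of $R$, a suitably general choice of $r$ homogeneous elements of $M$ yields a map $\bigoplus_j R(a_j)\to M$ that is an isomorphism at each minimal prime, hence injective by torsion-freeness, and its cokernel $C$ -- supported only at $\mathfrak m$ -- has finite length; the triangle coming from $0\to\bigoplus_j R(a_j)\to M\to C\to 0$, together with the vanishing of $\bigoplus_j R(a_j)$ in $D^b_{\mathrm{sg}}(\gr-R)$, gives $M\simeq C$. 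In general the ranks $r_i$ of $M$ along the distinct minimal primes $\mathfrak p_1,\dots,\mathfrak p_s$ of $R$ differ -- the loop polynomial already factors as $\w=xy\,w_1\cdots w_\ell$, so $R$ is far from a domain -- and one repairs this by replacing $M$ with $M\oplus\bigoplus_i(R/\mathfrak p_i)^{\oplus(r-r_i)}$ (with suitable grading shifts), $r=\max_i r_i$, where each $R/\mathfrak p_i$ is a one-dimensional graded domain and hence MCM over $R$; this module has constant rank $r$, so by the previous case it becomes isomorphic in $D^b_{\mathrm{sg}}(\gr-R)$ to a finite-length module, whence $M$ is a direct summand of an object of the thick subcategory generated by $\{\mathbf{k}(\ell)\}$. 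Putting the three cases together proves the statement. The one genuinely delicate point -- equalising the generic ranks of an MCM module over the reducible ring $R$ before extracting a full-rank graded free submodule -- is minor but fiddly, which is why we have simply cited \cite[Proposition 2.3.1]{PolishVainCFT} and \cite[Corollary 5.3]{Dyckerhoff}.
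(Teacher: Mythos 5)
The paper gives no argument of its own for this lemma: it is taken as quoted from \cite[Proposition 2.3.1]{PolishVainCFT} (cf.\ \cite[Corollary 5.3]{Dyckerhoff}), and since your proposal ultimately rests on the same citations, it is correct as it stands. Your supplementary sketch, though, runs along a genuinely different route from the cited proofs: Dyckerhoff establishes that the stabilised residue field is a \emph{compact generator} of the ind-completed category of matrix factorisations via a support/Koszul-type vanishing argument (if all shifted Homs out of $\mathbf{k}^{\mathrm{stab}}$ vanish then the object is supported away from the isolated singular point, hence trivial), and split generation of the compact objects then follows from Thomason--Neeman; Polishchuk--Vaintrob bootstrap this to the equivariant/graded setting. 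That route is uniform in any number of variables and never touches the module theory of $R$. Your argument is instead the classical dimension-one d\'evissage (torsion versus MCM, finite length handled by graded composition series, MCM handled by exhibiting a graded free submodule of full generic rank with finite-length quotient after equalising ranks with the $R/\mathfrak{p}_i$), which is more elementary but specific to Krull dimension one. The one step that is thinner than you indicate is the ``suitably general choice of $r$ homogeneous elements'': in the graded setting, general position only operates within a single graded piece $M_d$, and a fixed degree may localise into a proper subspace at one minimal prime while being generic at another; so one must verify that degrees $a_1,\dots,a_r$ (and, after adding the $R/\mathfrak{p}_i$ summands, grading shifts) exist for which homogeneous elements give bases at \emph{all} minimal primes simultaneously. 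For the rings at hand this does hold -- the components $R/\mathfrak{p}_i$ are positively graded in the rank-one free part of $L$ and one can always solve the relevant degree congruences -- but it is an additional verification, and it, at least as much as the rank equalisation you flag, is why deferring to \cite{PolishVainCFT} is the efficient course; since both you and the paper do exactly that, the correctness of the lemma is unaffected.
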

It should be emphasised that \cref{PVGeneration} is a statement about \emph{split} generation of the homotopy category $\mathrm{HMF}(\C^2, \Gamma, \w)$, rather than \emph{generation} of the dg-category $\mathrm{mf}(\C^2, \Gamma, \w)$.  Nevertheless, establishing that $\mathcal{B}$ split generates $\mathrm{HMF}(\C^2, \Gamma, \w)$ is the major step towards the required generation statement. 
\begin{prop}
	\label{BSplitGenerates}
	The category $\mathrm{HMF}(\C^2, \Gamma, \w)$ is split-generated by $\mathcal{B}$.
\end{prop}
Before giving the proof, we record the required generation result as the following corollary.

\begin{cor}
	\label{BGenerates}
	The functor
	\begin{align*}
	 \Tw \mathcal{B} \rightarrow \mathrm{mf}(\C^2, \Gamma, \w)
	\end{align*}
	is a quasi-equivalence.
\end{cor}
\begin{proof}
By \cref{BSplitGenerates}, the functor
\begin{align*}
\Tw^\pi\mathcal{B}\rightarrow \mathrm{HMF}(\C^2, \Gamma, \w)
\end{align*}
is a quasi equivalence. Since $\mathcal{B}$ is intrinsically formal, there is also a quasi-equivalence 
\begin{align*}
\Tw^\pi\mathcal{B}\rightarrow \mathrm{mf}(\C^2, \Gamma, \w).
\end{align*}
To show that $\mathcal{B}$ \emph{generates} $\mathrm{mf}(\C^2, \Gamma, \w)$, observe that, since $\mathcal{B}$ is a full exceptional collection in $\Tw\mathcal{B}$, the latter is already idempotent complete, as explained in \cite[Remark 5.14]{SeidelBook}. Therefore, taking the idempotent completion of $\Tw\mathcal{B}$ does nothing, and we get the result.
\end{proof}

\begin{proof}[Proof of \cref{BSplitGenerates}]
	We focus on the case of $\ell>1$ in order to differentiate it from \cite[Proposition 2.14]{HabermannSmith}, only explaining the modifications needed to incorporate the change of grading. Namely, let 
	\begin{align*}
	V=\{\obj{x}{i},\obj{y}{j},\obj{w_r}{},\obj{0}{i,j}\}
	\end{align*}
	be the objects of $\mathcal{B}$. We then show that $R(l)/(x,y)\in\langle V\rangle$ for any $l\in L$ and appeal to \cref{PVGeneration} to show that $V$ split-generates $\mathrm{HMF}(\C^2,\Gamma,\w)$. \\	
	
Since $[2]$ is equivalent to $(\vec{c})$, we must only show that $R(l)/(x,y)\in\langle V\rangle$ for $l\in L/\Z\vec{c}\simeq\Z/\left(\frac{pq-1}{\ell}\right)$. The argument to build the modules 
	\begin{gather*}
\{R(a\x+b\y)/(x,y)\}_{\{p-\frac{p-1}{\ell}+1\leq a\leq p,\ 2\leq b\leq q\}}\\
\{R(a\x+\y)/(x,y)\}_{\{p-\frac{p-1}{\ell}\leq a\leq p-1\}}\\
\{R((p-\frac{p-1}{\ell})\x+b\y)/(x,y)\}_{\{1\leq b\leq \frac{q-1}{\ell}+1\}}
\end{gather*}
 with the exception of $R((p-\frac{p-1}{\ell})\x+\y)/(x,y)$ in the second collection of modules and $R(\vec{c})/(x,y)$ in the third carry over from \cite[Proposition 2.14]{HabermannSmith} essentially unchanged. Note that, along the way, we built the modules 
 \begin{align}\label{PreviouslyBuiltModules}
 \begin{split}
 R(a\x)/(x), & \qquad\text{for } 1\leq a\leq p-1 \\
 R(b\y)/(y) & \qquad\text{for } 1\leq b\leq q-1,
 \end{split}
 \end{align} 
 and we will repeatedly appeal to this fact. \\

To build $R((p-\frac{p-1}{\ell})\x+\y)/(x,y)$, we begin by constructing $\obj{w_r}{}(l\x)$ for $l=0,\dots, (\ell-1)\frac{p-1}{\ell}$ and any $r=1,\dots,\ell$. Firstly, we have that 
\begin{align*}
R(\x)/(w_r)\simeq\mathrm{Cone}\Big(R((1-\frac{p-1}{\ell})\x)/(x)\xrightarrow{w_r}R(\x)/(xw_r)\Big).
\end{align*}
The domain of this morphism is $\obj{x}{p-\frac{p-1}{\ell}}$, whilst the codomain can be constructed as an extension of $R(\x)/(x)$ by $R/(w_r)$. From this, we build $R(2\x)/(xw_r)$ as the extension
\begin{align*}
0\rightarrow R(\x)/(w_r)\xrightarrow{x}R(2\x)/(xw_r))\rightarrow R(2\x)/(x)\rightarrow 0,
\end{align*}
where the module $R(2\x)/(x)$ was constructed in \eqref{PreviouslyBuiltModules}. From this, we get 
\begin{align*}
R(2\x)/(w_r)\simeq\mathrm{Cone}\big(R((2-\frac{p-1}{\ell})\x)/(x)\xrightarrow{w_r}R(2\x)/(xw_r)\big),
\end{align*}
where the domain is in \eqref{PreviouslyBuiltModules}, and the codomain was constructed immediately above. We then proceed inductively, alternatingly constructing $R(l\x)/(xw_r)$ and then $\obj{w_r}{}(l\x)$ for $l=0,\dots, (\ell-1)\frac{p-1}{\ell}$ (appealing to \eqref{PreviouslyBuiltModules} once $l>\frac{p-1}{\ell}$). Note that this argument is valid for any $r=1,\dots, \ell$.\\
With these modules constructed, we then iteratively construct the following modules. 
\begin{gather*}
0\rightarrow R/(w_1)\xrightarrow{w_2}R(\frac{p-1}{\ell}\x)/(w_1w_2)\rightarrow R(\frac{p-1}{\ell}\x)/(w_2)\rightarrow 0\\
0\rightarrow R(\frac{p-1}{\ell}\x)/(w_1w_2)\xrightarrow{w_3}R(2\frac{p-1}{\ell}\x)/(w_1w_2w_3)\rightarrow R(2\frac{p-1}{\ell}\x)/(w_3)\rightarrow 0\\
\vdots\\
\begin{aligned}
0\rightarrow R((\ell-1)\frac{p-1}{\ell}\x)/(w_1w_2\dots w_{\ell-1})\xrightarrow{w_{\ell}}R&((\ell-1)\frac{p-1}{\ell}\x)/(w_1w_2w_3\dots w_{\ell})\\
&\rightarrow R(((\ell-1)\frac{p-1}{\ell})\x)/(w_{\ell})\rightarrow 0
\end{aligned}
\end{gather*}

We then observe that $R((\ell-1)\frac{p-1}{\ell}\x)/(w_1w_2w_3\dots w_{\ell})[1]=R((p-\frac{p-1}{\ell})\x+\y)/(xy)$ (cf. \eqref{GradingShiftFunctor}), and so we can construct $R((p-\frac{p-1}{\ell})\x+\y)/(x,y)$ as the cone of the morphism
	\begin{align*}
\begin{tikzcd}[row sep=0.5ex, ampersand replacement = \&]
R((\ell-1)\frac{p-1}{\ell})\x+\y)/(y)\& \\
\bigoplus \ar[r, shorten >=3ex, shorten <=2ex,"(x\ y)"]\& R((p-\frac{p-1}{\ell})\x+\y)/(xy).\\
R((p-\frac{p-1}{\ell})\x)/(x)\& 
\end{tikzcd}
\end{align*} 
Note that $R((\ell-1)\frac{p-1}{\ell})\x+\y)/(y)=R(q-\frac{q-1}{\ell}y)/(y)$, and so both modules in the domain are in \eqref{PreviouslyBuiltModules}. \\

Finally, all that is left to do is construct the module $R(\vec{c})/(x,y)$, which we again do iteratively. 
\begin{gather*}
0\rightarrow R(\vec{c}-(\frac{q-1}{\ell}+1)\y)/(x)\xrightarrow{y} R(\vec{c}-\frac{q-1}{\ell}\y)/(x)\rightarrow R(\vec{c}-\frac{q-1}{\ell}\y)/(x,y)\rightarrow 0\\
0\rightarrow R(\vec{c}-\frac{q-1}{\ell}\y)/(x)\xrightarrow{y} R(\vec{c}-(\frac{q-1}{\ell}-1)\y)/(x)\rightarrow R(\vec{c}-(\frac{q-1}{\ell}-1)\y)/(x,y)\rightarrow 0\\
\vdots\\
0\rightarrow R(\vec{c}-2\y)/(x)\xrightarrow{y} R(\vec{c}-\y)/(x)\rightarrow R(\vec{c}-\y)/(x,y)\rightarrow 0.
\end{gather*}
The module $R(\vec{c}-(\frac{q-1}{\ell}+1)\y)/(x)=R((\ell-1)\frac{p-1}{\ell}\x)/(x)$ is from \eqref{PreviouslyBuiltModules}, and the modules on the right of the above short exact sequences are graded stabilisations of the origin which have been previously constructed. Putting this all together, we have
\begin{align*}
R(\vec{c})/(x,y)\simeq\Cone(R(\vec{c}-\y)/(x)\xrightarrow{y}& R(\vec{c})/(x))
\end{align*}
where the term on the domain was constructed above and the codomain is $\obj{x}{p-1}[2]$.
\end{proof}
We deduce the following corollary, whose proof follows from \cref{BGenerates} and \cref{LoopEndAlgebra} in the same way as in the proof of \cite[Theorem 2.19]{HabermannSmith}:
\begin{cor}[\cref{TiltingCor}, undeformed loop polynomial case]\label{LoopTiltingCor} The object 
\begin{align*}
\mathcal{E}:=\left(\bigoplus_{\substack{i=p-\frac{p-1}{\ell},\dots, p-1\\
		j=1,\dots, q-1}}\obj{0}{i,j}\right)\oplus&\left(\bigoplus_{\substack{i=p-\frac{p-1}{\ell},\dots, p-1}}\obj{x}{i}[3]\right)\\
	\oplus	&\left(\bigoplus_{\substack{j=q-\frac{q-1}{\ell},\dots, q-1}}\obj{y}{j}[3]\right)\oplus\left(\bigoplus_{\substack{r=1,\dots, \ell}}\obj{w_r}{}[3]\right)
\end{align*}
is a tilting object for $\mathrm{mf}(\C^2,\Gamma,\w)$. 
\end{cor}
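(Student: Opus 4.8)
The plan is to verify that $\mathcal{E}$ — which by construction is nothing but the direct sum $\bigoplus_{B\in\mathcal{B}}B$ of all the objects of the subcategory $\mathcal{B}$ studied in this section — satisfies the two defining properties of a tilting object: it is a classical generator of $\mathrm{mf}(\C^2,\Gamma,\w)$, and its graded endomorphism algebra is concentrated in cohomological degree $0$, so that $\End^\bullet(\mathcal{E})$ is an honest finite-dimensional algebra. Finite-dimensionality of all Hom-spaces is already guaranteed by the fact that $\w$ has an isolated singularity, and every object of $\mathrm{mf}(\C^2,\Gamma,\w)$ is compact, so the only substantive points are generation and degree-zero concentration, both of which have in effect already been established.

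For generation, I would invoke \cref{BGenerates}: the functor $\Tw\mathcal{B}\to\mathrm{mf}(\C^2,\Gamma,\w)$ is a quasi-equivalence. Since $\mathcal{B}$ is a full exceptional collection, $\Tw\mathcal{B}$ is already idempotent complete (this is the point recalled via \cite[Remark 5.14]{SeidelBook} in the proof of \cref{BGenerates}, ultimately resting on the split-generation statement \cref{PVGeneration}), so the objects of $\mathcal{B}$, equivalently the direct summands of $\mathcal{E}$, classically generate; in particular $\mathcal{E}$ is a classical generator.

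For the endomorphism algebra, I would expand
\begin{align*}
\Hom^\bullet_{\mathrm{mf}(\C^2,\Gamma,\w)}(\mathcal{E},\mathcal{E})\simeq\bigoplus_{B,B'\in\mathcal{B}}\Hom^\bullet(B,B')
\end{align*}
and apply \cref{LoopEndAlgebra}, which identifies the right-hand side with the path algebra of the quiver-with-relations of Figure \ref{figLoopQuiver}, all of whose arrows lie in degree $0$; this is exactly the reason for the cohomological shifts $[3]$ built into the definitions of $\mathcal{E}$ and of the objects of $\mathcal{B}$. Hence $\mathcal{E}$ is tilting, with $\End(\mathcal{E})$ the displayed quiver algebra and $\mathrm{mf}(\C^2,\Gamma,\w)\simeq\mathrm{perf}(\End\mathcal{E})$; counting vertices gives $\tfrac{pq-1}{\ell}+\ell$, recovering the length asserted in \cref{TiltingCor} in the loop case. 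The whole argument is the verbatim analogue of \cite[Theorem 2.19]{HabermannSmith}.

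I do not anticipate a genuine obstacle: the two inputs \cref{BGenerates} and \cref{LoopEndAlgebra} carry all the weight, the former packaging the Polishchuk--Vaintrob split-generation argument together with intrinsic formality, the latter the explicit matrix-factorisation computations. The only place warranting care is the upgrade from split-generation to honest generation, which is why the full-exceptional-collection/idempotent-completeness observation is needed; everything else is bookkeeping.
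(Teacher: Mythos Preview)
Your proposal is correct and matches the paper's approach exactly: the paper simply states that the corollary follows from \cref{BGenerates} and \cref{LoopEndAlgebra} in the same way as \cite[Theorem 2.19]{HabermannSmith}, which is precisely the argument you have spelled out.
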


\section{Loop A-model}
\label{LoopAModel} 
\subsection{The crepant resolution of the $A_{\ell-1}$ singularity}\label{CrepResolutionSection}
We begin this section with a recounting of the classically understood crepant\footnote{Recall that a resolution $\pi:\widetilde{X}\rightarrow X$ is \emph{crepant} if $\pi^*K_X= K_{\widetilde{X}}$.} resolution of du Val singularities of type $A$ -- see, for example \cite{Reid}. This subsection will be applicable to all invertible polynomials in two variables. \\

Recall that the $A_{\ell-1}$ singularity is defined as $\C^2/\mu_{\ell}=X=\Spec\C[x,y]^{\mu_\ell}\simeq\Spec\C[u,v,w]/(uv-w^\ell)$, where $\mu_\ell$ acts by $\xi\cdot(\xt,\yt)=(\xi \xt,\xi^{-1}\yt)$. The crepant resolution is
\begin{align*}
\pi:\widetilde{X}\rightarrow X,
\end{align*}
where 
\begin{align*}
\widetilde{X}=\bigcup_{i=1}^{\ell}\widetilde{X}_i,
\end{align*}
and each $\widetilde{X}_i\simeq \C^2_{(\lambda_i,\mu_i)}$. Here, the coordinates $\lambda_i,\mu_i$ are related to the coordinates on $X$ by $\lambda_i=u/w^{i-1}$, $\mu_i=w^i/u$ where this makes sense. Correspondingly, the transition functions are given by
\begin{align*}
\widetilde{X}_i\setminus(\mu_i=0)&\xrightarrow{\sim} \widetilde{X}_{i+1}\setminus(\lambda_{i+1}=0)\\
(\lambda_i,\mu_i)&\mapsto (\mu_i^{-1},\mu_i^2\lambda_i).
\end{align*}

On each chart, the resolution is given as 
\begin{align*}
\pi|_{\widetilde{X}_i}:\C^2_{(\lambda_i,\mu_i)}&\rightarrow X\\
(\lambda_i,\mu_i)&\mapsto (\lambda_i^i\mu_i^{i-1},\lambda_i^{\ell-i}\mu_i^{\ell+1-i},\lambda_i\mu_i).
\end{align*}
This resolution is constructed by iterated blowups of $X$. An important fact is that the blowup process yields an embedding of $\widetilde{X}$ into $\C^3\times\P^2\times\dots\times\P^2$, where there are ${\lfloor\frac{\ell}{2}\rfloor}$ factors of $\P^2$. We define the symplectic form $\omega_{\widetilde{X}}$ to be the pullback of the product of the standard symplectic form on $\C^3$ and the Fubini--Study form on the $\P^2$ factors. We also drop $\widetilde{X}$ from the notation and refer to the symplectic form simply as $\omega$, since we believe that no confusion can arise. This form is simple to write down in a given patch, although has quite a few terms. As an example, in the patch $\widetilde{X}_1$, it is given by 
\begin{align*}
\phi^*_1\omega= \frac{i}{2}\Big((1+|\mu_1|^2+|\mu_1|^4)\diff\lambda_1&\wedge\diff\bar{\lambda}_1+(\lambda_1\bar{\mu}_1+2|\mu_1|^2\lambda_1\bar{\mu}_1)\diff\mu_1\wedge\diff\bar{\lambda}_1\\&+(\mu_1\bar{\lambda}_1+2|\mu_1|^2\mu_1\bar{\lambda}_1)\diff\lambda_1\wedge\diff\bar{\mu}_1
+(4|\lambda_1|^2|\mu_1|^2+|\lambda_1|^2)\diff\mu_1\wedge\diff\bar{\mu}_1\\
&\qquad\qquad+\frac{(1+4|\mu_1|^2+|\mu_1|^4)\diff\mu_1\wedge\diff\bar{\mu}_1}{(1+|\mu_1|^2+|\mu_1|^4)^2}\Big),
\end{align*}
where $\phi_1:\widetilde{X}_1\rightarrow \widetilde{X}$ is the inclusion of the chart $\widetilde{X}_1\simeq \C^2_{(\lambda_1,\mu_1)}$. It is extremely important that, since the spheres in the exceptional locus are holomorphic, $\omega$ is \emph{not exact}.\\
In terms of writing down $[\omega]\in H^2(\widetilde{X})$, observe that in the case $\ell=2$, then $\widetilde{X}=\mathrm{tot}\ \mathcal{O}_{\P^1}(-2)=T^*\P^1$, and $[\omega]=2[\pi^*\omega_{\mathrm{FS}}]=-\pi[\mathrm{PD}(C)]$, where $C$ is the exceptional sphere, and $\omega_{\mathrm{FS}}$ is the Fubini--Study form on $\P^1$ (see, for example, \cite{Ritter}). Symplectically, the crepant resolution of the $A_{\ell-1}$ is the completion of $\ell-1$ copies of $\mathbb{D}T^*\P^1$ plumbed together according to the $A_{\ell-1}$ Dynkin diagram, where the symplectic form on each factor is as above. Therefore, each exceptional sphere has symplectic area $2\pi$. Putting this all together, the symplectic form we work with on the total space has cohomology class
\begin{align}\label{EvenSympForm}
[\omega]=-\pi\frac{\ell^2}{4}\mathrm{PD}(C_{\frac{\ell}{2}})-\sum_{r=1}^{\frac{\ell}{2}-1}r\pi(\ell-r)(\mathrm{PD}(C_r)+\mathrm{PD}(C_{\ell-r}))
\end{align}
for $\ell$ even, and 
\begin{align}\label{OddSympForm}
[\omega]=-\sum_{r=1}^{\frac{\ell-1}{2}}r\pi(\ell-r)(\mathrm{PD}(C_r)+\mathrm{PD}(C_{\ell-r}))
\end{align}
for $\ell$ odd, where $C_r$ are the exceptional spheres. 
It goes back to work of Brieskorn, \cite{Brieskorn}, that the Milnor fibre and minimal resolution of an ADE singularity are diffeomorphic (in fact, work of Ohta--Ono, \cite{OhtaOno}, show that there is a unique diffeomorphism type for symplectic fillings of the link of a simple singularity). On the other hand, they are very different as symplectic manifolds -- the standard symplectic form on the Milnor fibre is exact. In this work, we insist on equipping the total space $\widetilde{X}$ with the symplectic structure as above. 
\subsection{Equivariant Morsifications}\label{EqMorsificationSection}
In the introduction, we stated that we are associating a Fukaya--Seidel category to a specific equivariant Morsification of the superpotential. In this subsection, we justify this definition. \\
The Fukaya--Seidel category for a hypersurface singularity, as defined in \cite{SeidelBook}, is independent of all choices, up to derived equivalence. In particular, it is independent of the choice of Morsification. We would like to be able to make the same statement about equivariant Morsifications; however, it quickly becomes clear that, whilst Morsifications abound, equivariant Morsifications are quite special. In particular, it is easy to cook up examples of superpotentials which have no equivariant Morsifications. The following seems to be the simplest such example. 
\begin{ex}\label{NoEqMorseEx}
	Let $\wt=x^4+y^4:\C^2\rightarrow \C$ and $G=\langle \alpha,\beta|\ \alpha^2=\beta^2=(\alpha\beta)^2=-1\rangle\subseteq\mathrm{SL}(2,\C)$. Concretely $G$, acts on $\C^2$ via
	\begin{align*}
	\alpha=\begin{pmatrix}
	\varepsilon & 0 \\
	0 & \varepsilon^{-1}
	\end{pmatrix},\ \beta=\begin{pmatrix}
	0 & 1 \\
	-1 & 0
	\end{pmatrix},
	\end{align*}
	where $\varepsilon$ is a fixed $4$-th root of unity. The invariant polynomials are $u=x^5y-xy^5$, $v=x^4+y^4$ and $w=x^2y^2$, and adding any combination of $u,v$ or $w$ does not yield a polynomial with only non-degenerate critical points. Therefore, $\wt$ cannot be equivariantly Morsified. 
\end{ex}
On the other hand, if equivariant Morsifications exist, it seems reasonable to expect in general that the resulting category is independent of this choice by a deformation-invariance argument, as in the non-equivariant case. We avoid questions of this nature in general, and instead demonstrate this only in the case at hand. \\
Restricting to the case of $\ell>1$, so that we are not in the maximally graded case, one must add monomials in the Jacobian which are divided by $x^\ell$, $y^\ell$ or $xy$ in order to equivariantly Morsify $\wt$. For $\ell>2$, it is immediate that adding any linear combination of $x^\ell$ and $y^\ell$ or powers thereof does not Morsify the polynomial -- the critical point at the origin remains degenerate. On the other hand, the origin is a non-degenerate critical point for any deformation of $\wt$ where we add any linear combination of $x^\ell$, $y^\ell$, $xy$ or higher powers, \emph{provided the coefficient of $xy$ is non-trivial}. In particular, any equivariant Morsification of $\wt$ is a further deformation of the resonant Morsification $\wt_{\varepsilon}$. For $\ell=2$, one can now also equivariantly Morsify by adding $\varepsilon_1x^2+\varepsilon_2y^2$ for $\varepsilon_i\neq0$, although this can similarly be deformed through polynomials with non-degenerate critical points to the resonant Morsification. Therefore, the space of equivariant Morsifications is path-connected, and the resulting categories will be derived equivalent by a deformation invariance argument, as in the non-equivariant case. We therefore feel justified in defining $\mathcal{FS}(\wttilde_\varepsilon)$ to be \emph{the} Fukaya--Seidel category associated $\wttilde$. \\
The second subtle point of defining the orbifold Fukaya--Seidel category of an equivariant Morsification is the order of pulling back and deforming the polynomial. In this paper, we will insist on equivariantly Morsifying and then pulling back. The reason is that, if one were to pull back $\widecheck{\overline{\w}}$ (without Morsifying), then the exceptional locus would be critical. It therefore doesn't seem to make sense to `Morsify', since this would imply that this critical locus splits into a Milnor number's worth of critical points. Even if one were to ignore this and deform the superpotential, only requiring that it becomes non-degenerate on each chart and that this patches together to something globally defined, this is equivalent to pulling back a superpotential on $X$ which comes from an equivariant Morsification. 
\subsection{Definition of the Fukaya--Seidel category for a non-exact total space}\label{NonExactFSSection}
Whilst the A--model strategy follows that of \cite{HabermannSmith}, there are technical subtleties in this setting due to the non-exactness. In particular, the exactness of the total space is key in the construction of the Fukaya--Seidel category in \cite{SeidelBook} by ruling out sphere and disc bubbling. Since our total space is non-exact, we follow the strategy of \cite[Section 3]{AOK}, who also allow for non-exact total spaces and Lagrangians by demanding the following two conditions:
\begin{enumerate}[(i)]
	\item The smooth fibre $\Sigma$ is exact,
	\item The homotopy group $\pi_2(\Sigma)$ and the relative homotopy group $\pi_2(\Sigma,V_i)$, for any (potentially non-exact) vanishing cycle $V_i$, vanish.
\end{enumerate}
These conditions are both fulfilled in our case. The first requirement follows from the fact that the fibre is a punctured surface, and so every two-form is exact. The vanishing of $\pi_2(\Sigma)$ follows from the uniformisation theorem for Riemann surfaces, whilst nontrivial $\pi_2(\Sigma,V_i)$ implies that $V_i$ is trivial in homology. In our construction, we only consider Lagrangians which are nontrivial in homology.\\

Another subtlety about working with a non-exact total space is that we must work over the Novikov field, defined as 
\begin{align*}
\Lambda_\C:=\{\sum_{i\geq 1}a_iT^{\lambda_i}|\ a_i\in\C,\ \lambda_i\in\R,\ \lim_{i\rightarrow \infty}\lambda_i\rightarrow\infty \}.
\end{align*}
In general, this is crucial for two reasons. The first is that there can be infinitely many discs contributing to a given $A_\infty$ operation, and the sum is not guaranteed to converge. As a consequence of Gromov compactness, the sum converges in the Novikov field. Fortunately, we will see that this is not an issue for us, since the only non-trivial operation is the product, and there are only finitely many discs contributing to this.\\
The second crucial consequence of working over the Novikov field is Hamiltonian invariance. Even when sums converge, it is possible to lose Hamiltonian invariance when specialising the Novikov parameter. The essential point is that $T^A=T^B$ if and only if $A=B$, but the multivalued-ness of the complex logarithm means that $z^A=z^B$ does not imply $A=B$ for any $z\in\C$ unless one were to restrict the values of $A$ and $B$ so as not to cross the branch locus. This is what we do, only checking that Hamiltonian invariance is preserved between Lagrangians which we are allowing as objects in our category, rather than any possible Lagrangian. \\

Another technical point which can't be handled as in the maximally graded case is that, in order for the construction of the Fukaya--Seidel category to be well-defined, we need there to be a complete K\"ahler metric on the total space, with respect to which the gradient flow of $\wttilde$ is bounded from below outside of a compact set. In the maximally graded setting, this follows from the polynomials being tame in the sense of \cite[Proposition 3.11]{Broughton}, however, this no longer applies directly, since the total space  in the case at hand is not $\C^n$. Nevertheless, we can appeal to the tameness of invertible polynomials to demonstrate that $|\nabla\wttilde|$ is bounded from below outside of a compact set. \\
Observe that, by construction, we have an isomorphism $(\C^2\setminus{\mathbb{D}^2})/\widecheck{\overline{\Gamma}}\simeq \widetilde{X}\setminus{U}$, where $\mathbb{D}^2$ is a small disc centred at the origin in $\C^2$, and $U$ is a tubular neighbourhood of the exceptional locus in $\widetilde{X}$. Since $\wt:\C^2\rightarrow \C$ is tame, it remains so when it descends to $(\C^2\setminus{\mathbb{D}^2})/\widecheck{\overline{\Gamma}}$ since the action of $\widecheck{\overline{\Gamma}}$ is free away from the origin. By the above isomorphism, this shows that $|\nabla\wttilde|$ is bounded from below on $\widetilde{X}\setminus U$ with respect to the metric induced from the blow-up construction (which agrees with the metric on $(\C^2\setminus{\mathbb{D}^2})/\widecheck{\overline{\Gamma}}$ away from the exceptional locus). \\

To summarise, we have that $\wttilde_\varepsilon:\widetilde{X}\rightarrow \C$ defines a Lefschetz fibration. We take $\mathcal{A}_{\widecheck{\overline{\Gamma}}}$ to be a collection of Lagrangian branes in the smooth fibre (which is well-defined by the preceding paragraph), and check that Hamiltonian invariance is preserved between these branes for a given choice of Novikov parameter specialisation. We then \emph{define}, as in \cite{AOK}, the Fukaya--Seidel category (over $\C$) as $ \mathcal{FS}(\wttilde):=\Tw \mathcal{A}_{\widecheck{\overline{\Gamma}}}$. 
\subsection{A resonant Morsification}
In the maximally graded case studied in \cite[Section 3]{HabermannSmith}, a Morsification  $\wt_\varepsilon=\xt^p\yt+\yt^q\xt-\varepsilon \xt\yt$ with particularly nice properties was used. Such a Morsification was called \emph{resonant}, and allowed the symmetry present to be maintained and ultimately exploited. Whilst it was not considered in the maximally graded setting, the resonant Morsifications are in fact also \emph{equivariant} for any $\widecheck{\overline{\Gamma}}\simeq\mu_\ell$. Therefore, such a Morsification descends to a map $\widecheck{\overline{\w}}_\varepsilon:X\rightarrow \C$ given by 
\begin{align*}
\widecheck{\overline{\w}}_\varepsilon(u,v,w)=w(u^{\frac{p-1}{\ell}}+v^{\frac{q-1}{\ell}}-\varepsilon).
\end{align*}
Pulling this back to the chart $\widetilde{X}_i\simeq\C^2_{(\lambda_i,\mu_i)}$ of the crepant resolution $\widetilde{X}$ yields
\begin{align*}
\wttilde_{i,\varepsilon}(\lambda_i,\mu_i)=\lambda_i\mu_i(\lambda_i^{\frac{i(p-1)}{\ell}}\mu_i^{\frac{(i-1)(p-1)}{\ell}}+\lambda_i^{\frac{(\ell-i)(q-1)}{\ell}}\mu_i^{\frac{(\ell+1-i)(q-1)}{\ell}}-\varepsilon)
\end{align*}
and this patches together to give a globally defined map $\wttilde_{\varepsilon}:\widetilde{X}\rightarrow \C$. From now on, we will refer to the restrictions of $\wttilde_\varepsilon$ to the charts as $\wttilde_{i}$, with the reference to $\varepsilon$ left implicit. \\

Analogously to the maximally graded case, the critical points of $\wttilde_\varepsilon$ can be grouped into four types: 
\begin{enumerate}[(i)]
	\item \label{crit1} $\mu_1=0$, $\lambda_1^{\frac{p-1}{\ell}}=\varepsilon$
	\item \label{crit2} $\mu_\ell^{\frac{q-1}{\ell}}=\varepsilon$, $\lambda_\ell=0$
	\item \label{crit3} $\mu_i=\lambda_i=0$ for $i=1,\dots, \ell$
	\item \label{crit4} $\lambda_1^{\frac{p-1}{\ell}}=\frac{q-1}{pq-1}\varepsilon$, $\lambda_1^{\frac{(\ell-1)(q-1)}{\ell}}\mu_1^{q-1}=\frac{p-1}{pq-1}\varepsilon$.
\end{enumerate}
Note that the critical points of type (\ref{crit4}) can equivalently be described in any of the charts of $\widetilde{X}$; we have given them in the first for convenience. The critical points in the first three groups have critical value $0$, whilst those in the last have critical value 
\begin{align*}
\frac{-\varepsilon\mu_1\lambda_1}{pq-1}.
\end{align*}
As in our previous work, there is a clear symmetry of these critical points. Namely, let $(\lambda_{1,\text{crit}}^+,\mu_{1,\text{crit}}^+)$ be the unique positive real critical point of type (\ref{crit4}) in the chart $\widetilde{X}_1$, with corresponding critical value $c_{\text{crit}}$. Letting $\zeta$ and $\eta$ denote the roots of unity
\begin{align}
\label{RootsOfUnity}
\zeta = e^{2\pi i/(p-1)} \quad \text{and} \quad \eta = e^{2\pi i/(q-1)},
\end{align}
we see that there is a $\mu_{p-1}\times\mu_{q-1}$ action on the critical points of type (\ref{crit4}) given by
\begin{align*}
\{(\zeta^{m\ell}\lambda_{1,\text{crit}}^+, \zeta^{m(1-\ell)}\eta^n\mu_{1,\text{crit}}^+) : 0 \leq m \leq p-2\text{, } 0 \leq n \leq q-2\}.
\end{align*}
The critical value corresponding to $(\zeta^{m\ell}\lambda_{1,\text{crit}}^+, \zeta^{m(1-\ell)}\eta^n\mu_{1,\text{crit}}^+))$ is $\zeta^m\eta^nc_\mathrm{crit}$, so there are $\frac{\gcd(p-1, q-1)}{\ell}$ critical points in each of these critical fibres. In order to describe each critical point of type (iv) uniquely with respect to the symmetry and the reference critical point, we restrict to the subset
\begin{align}\label{CritPtSym}
\{(\zeta^{m\ell}\lambda_{1,\text{crit}}^+, \zeta^{m(1-\ell)}\eta^n\mu_{1,\text{crit}}^+) : 0 \leq m \leq \frac{p-1}{\ell}-1\text{, } 0 \leq n \leq q-2\}.
\end{align}
We do not require this to be a subgroup or have any other additional structure -- all we require is that this is a collection of elements in $\mu_{p-1}\times\mu_{q-1}$ such that each point in this collection corresponds to a critical point of type (iv). 
\begin{rmk}
Whilst it appears that we are making a choice in the above, choosing different elements of $\mu_{p-1}\times\mu_{q-1}$ leads to on-the-nose the same result, analogously to \cref{ChoicesRMK2} on the B--side. 
\end{rmk}

Our strategy for understanding the Fukaya--Seidel category of $\wttilde_\varepsilon$ is essentially modelled on the strategy of the maximally graded case. Namely, we fix our regular fibre $\Sigma$ to be $\wttilde^{-1}_1(-\delta)$ where $\delta$ is a positive real number much less than $\eps$ (in other words, we take $*=-\delta$). This reference fibre is equivalently described in any of the charts, and is equivalent to the quotient of the Milnor fibre of $\wt$ by $\widecheck{\overline{\Gamma}}$, as studied in \cite[Section 6.1]{HabStackyCurves}. For the critical points of types (\ref{crit1})--(\ref{crit3}) we follow the strategy employed in the maximally graded case and define the vanishing path given by the straight line segment from $-\delta$ to $0$.  For the critical point $(\zeta^{m\ell}\lambda_{1,\text{crit}}^+, \zeta^{m(1-\ell)}\eta^n\mu_{1,\text{crit}}^+)$, meanwhile, we define the \emph{preliminary vanishing path} $\gamma_{m,n}$ by following the circular arc $-\delta e^{i\theta}$ as $\theta$ increases from $0$ to
\begin{align*}
\theta_{m,n} \coloneqq 2\pi \left(\frac{m}{p-1}+\frac{n}{q-1}\right)
\end{align*}
and then following the radial straight line segment from $-\zeta^m\eta^n\delta$ to $\zeta^m\eta^nc_\mathrm{crit}$. The preliminary vanishing paths are then altered to the temporary, and then final, vanishing paths in the same way as the maximally graded case. 
\subsection{The zero-fibre and its smoothing}
The fibre of $\wttilde_\varepsilon$ over zero has $\ell+2$ components: in the charts $\widetilde{X}_1$ and $\widetilde{X}_\ell$, there are the line $\{\mu_1=0\}$ and $\{\lambda_\ell=0\}$, respectively. The complex lines $(0,\mu_i)$ and $(\lambda_{i+1},0)$ patch together to give $\ell-1$ projective lines in an $A_{\ell-1}$ configuration. Finally, there is the curve given in local charts by $\fact_i=\wttilde_i\lambda_i^{-1}\mu_i^{-1}$. This fibre is sketched in \cref{fig:loopsingmf}.
\begin{figure}[H]
	\centering
	\scalebox{.5}{
	\includegraphics[width=0.9\linewidth]{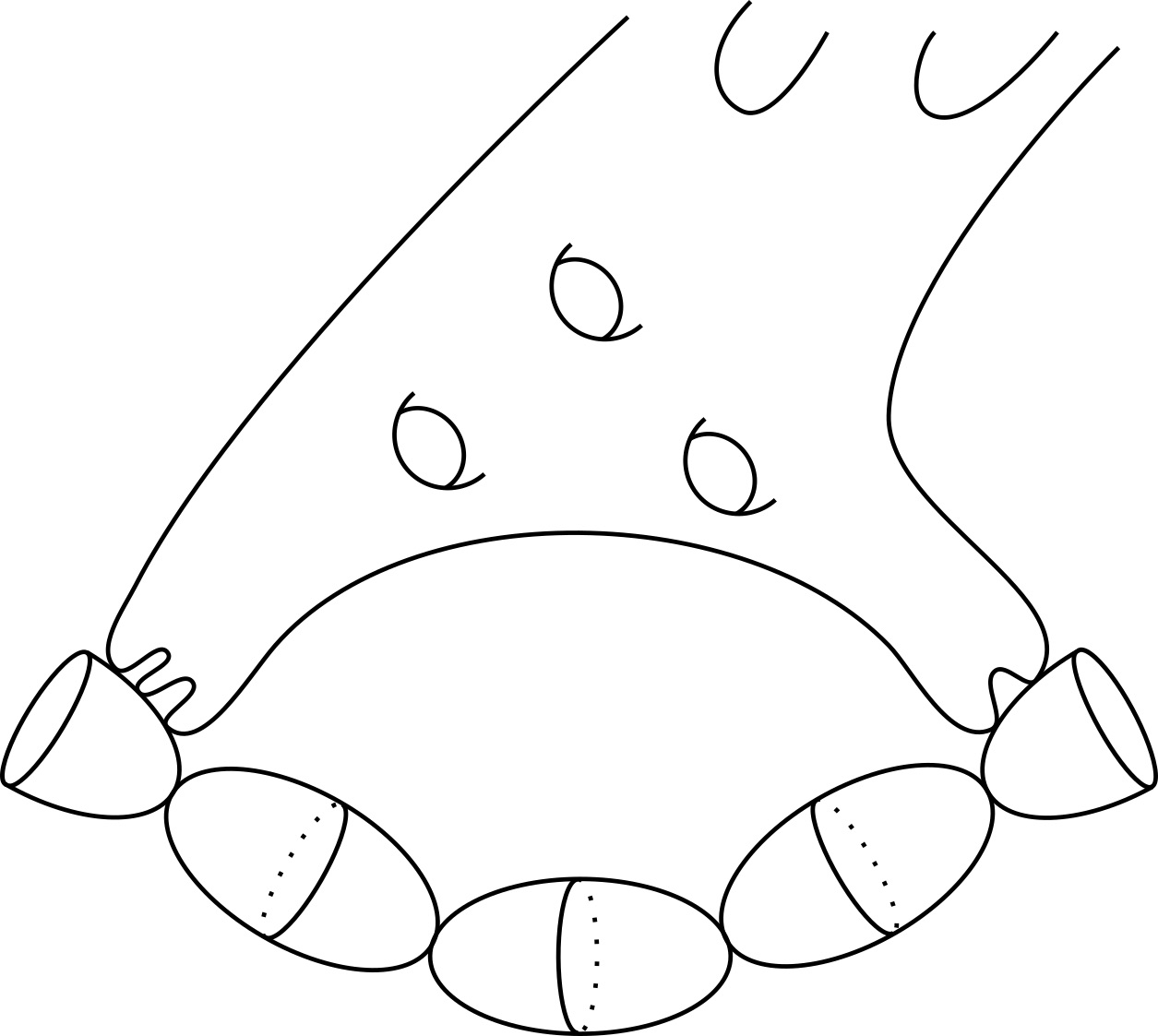}}
	\caption{Sketch of the fibre of $\wttilde_\varepsilon$ above the origin. This does not represent any specific polynomial, and is only meant to convey the general shape. See \cref{fig:loopsingularfibrespecificexample} for a specific example.}
	\label{fig:loopsingmf}
\end{figure}

Upon smoothing to $\wttilde_\varepsilon^{-1}(-\delta)$, each of the nodes is smoothed to a thin neck whose waist curve is the corresponding vanishing cycle in $\Sigma$. We denote these vanishing cycles by $\vc{\mu_1\fact_1}{m}$, $\vc{\lambda_\ell\fact_\ell}{n}$ and $\vc{\mu_i\lambda_i}{}$ for $m=0, \dots, \frac{p-1}{\ell}-1$, $n=0, \dots, \frac{q-1}{\ell}-1$ and $i=1,\dots,\ell$, corresponding to critical points\footnote{ $\sqrt[\frac{p-1}{\ell}]{\varepsilon}$, $\sqrt[\frac{q-1}{\ell}]{\varepsilon}$ are the real roots.} $(\zeta^{m\ell}\sqrt[\frac{p-1}{\ell}]{\varepsilon}, 0)$, $(0, \eta^{n\ell}\sqrt[\frac{q-1}{\ell}]{\varepsilon})$ and $(\lambda_i,\mu_i)=(0, 0)$ respectively\footnote{The symmetry of the critical points given in \eqref{CritPtSym} is in the chart $\widetilde{X}_1$. In order to to see the symmetry in another chart, one must compose it with the transition function. In particular, the symmetry for the critical points of type (\ref{crit2}) must be computed in the chart $\widetilde{X}_{\ell}$, where the action is as given here, rather than in \eqref{CritPtSym}.}.

\begin{rmk}
	The topology of the smooth fibre was computed in \cite[Section 6.1]{HabStackyCurves}. Namely, it is a curve of genus 
	\begin{align*}
	g(\Sigma)=\frac{1}{2\ell}(pq-1-\gcd(\ell(p-1),p+q-2))
	\end{align*}
	with 
	\begin{align*}
	2+\gcd(p-1,\frac{p+q-2}{\ell})
	\end{align*}
	boundary punctures. Note that this could also have be deduced in the same way as in \cite[Remark 3.1]{HabermannSmith}, although, since the $\vc{\mu_i\lambda_i}{}$ are homologous, the rank of $\HH_1(\Sigma;\Z)$ is $\frac{pq-1}{\ell}+1$, rather than $\frac{pq-1}{\ell}+\ell$.
\end{rmk}

As in our previous work, we argue that the monodromy of parallel transport around an arc of small enough radius $\delta$ is supported in the neck regions which emerge upon smoothing, and is the product of (partial) Dehn twists in these regions. After deleting these regions of $\Sigma$ to obtain $\Sigma'$, we may trivialise the fibration $\wttilde_\varepsilon$ over the disc of radius $\delta$ such that the smooth fibre is given by $\Sigma'$. See \cref{NeckRegionsRemoved} for a sketch of the complement of the neck regions. Concretely, $\Sigma'$ comprises: the $\lambda_1$-axis (so $\mu_1=0$) with small discs removed around the $\frac{p-1}{\ell}$-th roots of $\varepsilon$, as well at the origin, the $\mu_1$-hyperplane with the a disc around the origin removed, the hyperplanes $\mu_i=0$ and $\lambda_i=0$ in each chart $\widetilde{X}_i\simeq\C^2_{(\lambda_i,\mu_i)}$ for $i=2,\dots,\ell-1$, with a small disc around the origin and infinity in each removed\footnote{We are double counting here, since, for example, the $\lambda_2=0$ hyperplane with a disc around the origin removed is biholomorphic to the $\mu_3=0$ hyperplane in $\widetilde{X}_3$ with a disc around the origin removed. We have done this to be explicit about what is visible in each chart, rather than providing a minimal description of $\Sigma'$.}, the $\mu_\ell$-axis ($\lambda_\ell=0$) with small discs around the $\frac{q-1}{\ell}$-th roots of $\varepsilon$ and the origin in this chart removed, the $\mu_\ell=0$ complex line with the origin removed, and, lastly a $\frac{(p-1)(q-1)}{\ell}$-fold cover of the line $\{u+v=\varepsilon\}$ with small discs around $(\varepsilon,0)$ and $(0,\varepsilon)$ removed. Here, the covering map is given in the chart $\widetilde{X}_i$ by $(\lambda_i,\mu_i)\mapsto (\lambda_i^{\frac{i(p-1)}{\ell}}\mu_i^{\frac{(i-1)(p-1)}{\ell}}, \lambda_i^{\frac{(\ell-i)(q-1)}{\ell}}\mu_i^{\frac{(\ell+1-i)(q-1)}{\ell}})$, although it is most convenient to consider it in the first or last charts, where the fact that it is a $\frac{(p-1)(q-1)}{\ell}$-fold cover becomes obvious. See \cref{NeckRegionsRemoved} for a sketch of what the fibre appearing in \cref{fig:loopsingmf} looks like with the neck regions removed (ignoring the Lagrangians in this figure for the moment). 
\begin{figure}[H]
	\centering
	\scalebox{.6}{
	\includegraphics[width=0.9\linewidth]{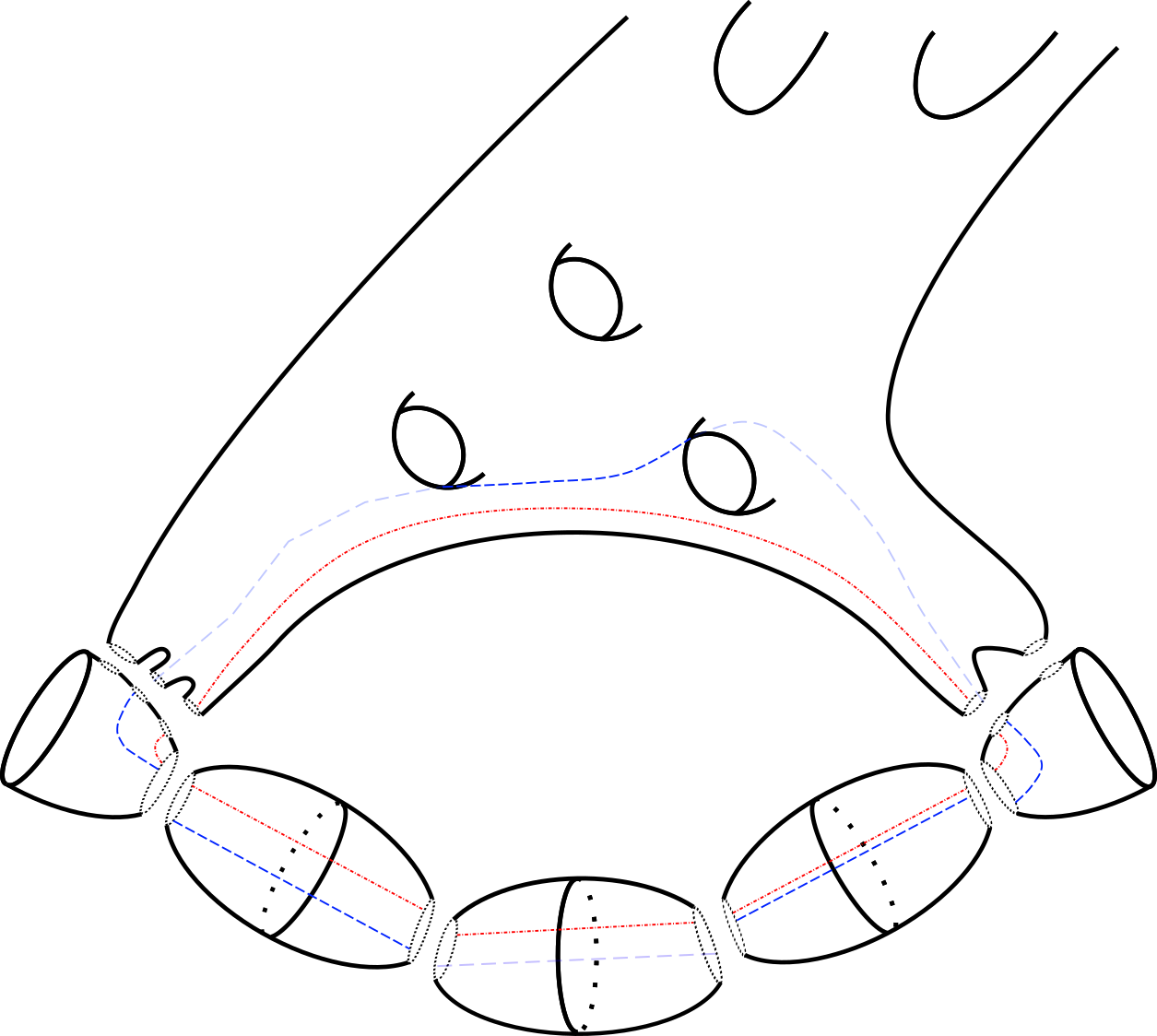}}
	\caption{Red Lagrangian (alternating dashes and dots) corresponds to the real vanishing cycle. Blue (dashed) Lagrangian is another vanishing cycle -- away from the neck regions in the exceptional locus, it is just the shift of the real vanishing cycle by a fixed argument. Light dashes indicate the Lagrangian is on the back side of the surface. }
	\label{NeckRegionsRemoved}
\end{figure}

\subsection{The preliminary vanishing cycles}\label{PrelimVCSubsection}

Let $\vcpr{0}{m,n}$ denote the preliminary vanishing cycle in $\Sigma$ corresponding to the critical point $(\zeta^{m\ell}\lambda_{1,\text{crit}}^+, \zeta^{m(1-\ell)}\eta^n\mu_{1,\text{crit}}^+)$ and the preliminary vanishing path $\gamma_{m,n}$.  In this subsection we explain the necessary alterations to the arguments in the maximally graded case required in order to describe these cycles. 
\begin{rmk}\label{IndependenceOfChartsRMK}
Before moving on to our argument, we would like to remind the reader that the critical points of type (\ref{crit4}) are visible in any chart. It would therefore be equivalent to work in any chart when studying the vanishing cycles corresponding to these critical points, and we choose to work in the chart $\widetilde{X}_1$.
\end{rmk}

Since $\wttilde_\varepsilon$ has real coefficients in a given chart, we can temporarily view it as a map $\R^2\rightarrow \R$. In the chart $\widetilde{X}_1$, this function has a local minimum at $(\lambda^+_{1,\mathrm{crit}}, \mu^+_{1,\mathrm{crit}})$, where it attains the value $c_\mathrm{crit} < 0$. There are no critical values in the interval $(c_\mathrm{crit}, 0)$, so the level sets $\wttilde_1^{-1}(c)$ for $c$ in this range have a component which is a smooth loop encircling $(\lambda^+_{1,\mathrm{crit}}, \mu^+_{1,\mathrm{crit}})$, and which shrinks down to this point as $c \searrow c_\mathrm{crit}$.  As $c \nearrow 0$ this loop, which we denote by $\Lambda_c$, converges to a piecewise smooth curve, $\Lambda_0$, whose segments are given in the chart $\widetilde{X}_i$ by the $\mu_i=0$, $\lambda_i=0$ and $\fact_i=\varepsilon$ (recall that $\fact_i=\wttilde_i\lambda_i^{-1}\mu_i^{-1}$). It is worth reiterating that the curve $\fact_i$ only intersects the coordinate hyperplanes of a given chart in the fibre over the origin only when $i=1,\ell$, where it intersects $\mu_1=0$ and $\lambda_\ell=0$, respectively. \\

Returning to the full complex picture, we exploit the symmetry of the situation, as well as the fact that the monodromy of parallel transport around a small arc centred at the origin is supported in the discs which are removed to give $\Sigma'$. In particular, our main task is to understand the monodromy in these regions. Fortunately, by taking $\varepsilon$ sufficiently small, the symplectic form can be made arbitrarily close to 
\begin{align*}
\frac{i}{2}(\diff\lambda_i\wedge\diff\bar{\lambda}_i+\diff\mu_i\wedge\diff\bar{\mu}_i)
\end{align*}
in the neck regions, meaning that only a minor adaptation of the parallel transport arguments of the maximally graded case is required.  Namely, over a path $c(t)$ contained in a neck region where the symplectic form is as above, symplectic parallel transport between the fibres of $\wttilde_\varepsilon$ is described by the ODE
\begin{equation}
\label{ParTransODE}
\begin{pmatrix}\dot{\lambda}_i \\ \dot{\mu}_i\end{pmatrix} = \frac{\dot{c}}{|\diff \wttilde_i|^2} \begin{pmatrix}\overline{\partial_{\lambda_i}\wttilde_i} \\ \overline{\partial_{\mu_i}\wttilde_i}\end{pmatrix}.
\end{equation}
This equation clearly preserves the real part of the equation along the path which follows the negative real axis. Therefore, the loops $\Lambda_c$ are taken to each other by parallel transport, and the loop $\Lambda_{-\delta}$ is the preliminary vanishing cycle $\vc{0}{0,0}^{\text{pr}}$. \\

Since parallel transport only affects the part of the curve which passes through the neck region, where the monodromy is contained, we are able to use the local description of the symplectic form in each neck region and patch the result together. In particular, how the curve is identified between patches is unaffected by parallel transport, since this identification happens away from the neck regions. Moreover, away from the neck regions, the curve $\vc{0}{0,0}^{\text{pr}}$ is particularly simple to describe: in the $\lambda_1$-axis of $\widetilde{X}_1$, it comprises the real line segment joining the deleted disc at the origin to the deleted disc about the real $\frac{p-1}{\ell}-{\text{th}}$ root of $\eps$, and in the $\mu_1$-projection, it is the straight line emanating from the deleted disc at the origin and going to infinity along the positive real axis. In both the $\lambda_i$ and $\mu_i$ projections in the charts $\widetilde{X}_i$ for $i=2,\dots,\ell-1$, as well as the $\lambda_{\ell}$ projection in $\widetilde{X}_{\ell}$, it is similarly the straight line emanating from the deleted disc at the origin and going to infinity along the positive real axis. In the $\mu_{\ell}$-projection in the chart $\widetilde{X}_\ell$ it is the straight line segment joining the deleted disc at the origin to the real $\frac{q-1}{\ell}-\text{th}$ root of $\eps$ in the $\mu_\ell$-line. Finally, the rest of the curve is the positive real lift of the line segment joining the deleted discs about $(\eps,0)$ and $(0,\eps)$ in $\{u+v=\eps\}$ under the covering map described above. A sketch of this real Lagrangian vanishing cycle away from the neck regions is given in \cref{NeckRegionsRemoved}. It is straightforward to check that, in each of the neck regions, the curve $\vc{0}{0,0}^{\text{pr}}$ is given by a hyperbola. For example, near the origin in the chart $\widetilde{X}_1$, it is given by 
\begin{align}\label{HyperbolicApproximation}
(\lambda_1,\mu_1)= \sqrt{\delta/\eps}(e^s,e^{-s})
\end{align}
where $s$ is a small real variable. \\

Now that we understand the real vanishing cycle $\vc{0}{0,0}^{\text{pr}}$, we utilise the symmetry present in the situation to characterise $\vc{0}{m,n}^{\text{pr}}$. As before, we decompose $\gamma_{m,n}$ into its radial segment from $-\zeta^m\eta^n\delta$ to $\zeta^m\eta^nc_\mathrm{crit}$ and circular segment from $-\delta$ to $-\zeta^m\eta^n\delta$. By construction, the map
\begin{align*}
f_{m,n}^{(i)}:\widetilde{X}_i&\rightarrow \widetilde{X}_i\\
(\lambda_i,\mu_i)&\mapsto (\zeta^{m(\ell+1-i)}\eta^{n(1-i)}\lambda_i,\zeta^{m(i-\ell)}\eta^{in}\mu_i)
\end{align*}
is a symplectomorphism on each patch with intertwines the transition maps and fits together to yield a globally defined symplectomorphism $f_{m,n}:\widetilde{X}\rightarrow \widetilde{X}$ which intertwines the map $\wttilde_\varepsilon :\widetilde{X}\rightarrow \C$. It is therefore clear that $f_{m,n}(\vc{0}{0,0}^{\text{pr}})$ is the vanishing cycle corresponding to the straight line segment from $-\zeta^m\eta^n\delta$ to the critical point\footnote{We are free the describe the critical points of type (\ref{crit4}) in any chart, so we choose the first chart for simplicity.} $(\zeta^{m\ell}\lambda_{1,\text{crit}}^+, \zeta^{m(1-\ell)}\eta^n\mu_{1,\text{crit}}^+)$, and a full description of $\vc{0}{m,n}^{\text{pr}}$ results from parallel transporting this around the arc from $-\zeta^m\eta^n\delta$ to $-\delta$. Since the monodromy of parallel transport along such an arc is contained in the neck regions, it is immediate that $\vc{0}{m,n}^{\text{pr}}=f_{m,n}(\vc{0}{0,0}^{\text{pr}})$ in $\Sigma'$, where it comprises: the straight line segment in the $\lambda_1-$line from the deleted disc about the origin to the deleted disc about the critical point $\zeta^{m\ell}\sqrt[\frac{p-1}{\ell}]{\varepsilon}$ (where $\sqrt[\frac{p-1}{\ell}]{\varepsilon}$ is the real root) and the straight line segment emanating from the deleted disc at the origin with argument $2\pi \big(\frac{m(1-\ell)}{p-1} +\frac{n}{q-1}\big)$. Similarly, in the $\lambda_i$ and $\mu_i$ projections in the chart $\widetilde{X}_i$, the Lagrangian is given by the straight line segment  emanating from the deleted disc at the origin with arguments $2\pi \big(\frac{m(\ell+1-i)}{p-1} +\frac{n(1-i)}{q-1}\big)$ and $2\pi \big(\frac{m(i-\ell)}{p-1} +\frac{in}{q-1}\big)$, respectively. In the $\lambda_{\ell}$ projection in the chart $\widetilde{X}_{\ell}$, the Lagrangian is emanating from the deleted disc at the origin with argument $2\pi \big(\frac{m}{p-1} +\frac{n(1-\ell)}{q-1}\big)$, and in the $\mu_{\ell}$ projection it is given by the straight line segment joining the deleted disc about the origin to the deleted disc about $\eta^{n\ell}\sqrt[\frac{q-1}{\ell}]{\varepsilon}$. Finally, the rest of the curve in $\Sigma'$ is given by the lift of the line segment joining the deleted discs about $(\eps,0)$ and $(0,\eps)$ in $\{u+v=\eps\}$ to the $\zeta^{m\ell}\R_+\times\zeta^{m(1-\ell)}\eta^n\R_+$-locus in $\C^2\simeq \widetilde{X}_1$ under the projection described above. In the complement of the neck regions, a sketch of such a vanishing cycle is given by the blue (dashed) Lagrangian in \cref{NeckRegionsRemoved}. In the neck regions of $\Sigma$, the curve $f_{m,n}(\vc{0}{0,0}^{\text{pr}})$ is given by a hyperbola of the form \eqref{HyperbolicApproximation}, and to complete the description of $\vc{0}{m,n}^{\text{pr}}$, we must analyse how this changes under parallel transport along the arc from $-\zeta^m\eta^n\delta$ to $-\delta$ through the angle $\theta_{m,n}$. \\

In the neck region about the origin in a given chart, the polynomial $\wttilde_i$ can be approximated by $-\eps\lambda_i\mu_i$. Here, the parallel transport equation \eqref{ParTransODE} becomes
\begin{equation}
\label{LocalParTrans}
\begin{pmatrix}\dot{\lambda}_i \\ \dot{\mu}_i\end{pmatrix} = \frac{-\dot{c}}{\eps(|\lambda_i|^2+|\mu_i|^2)} \begin{pmatrix}\overline{\mu}_i \\ \overline{\lambda}_i\end{pmatrix}.
\end{equation}
Moreover, the $\zeta^{m(\ell+1-i)}\eta^{n(1-i)}\R_+\times\zeta^{m(i-\ell)}\eta^{in}\R_+$-locus in the $\vc{\lambda_i\mu_i}{}$-neck region is approximated by 
\begin{align*}
(\lambda_i,\mu_i)=\sqrt{\delta/\eps}(\zeta^{m(\ell+1-i)}\eta^{n(1-i)}e^s,\zeta^{m(i-\ell)}\eta^{in}e^{-s}).
\end{align*}
Studying the solution to parallel transport over the line $c(t)=-\delta e^{it}$ as $t$ ranges from $\theta_{m,n}$ to $0$, we postulate a solution of the form $(\lambda_i,\mu_i)=\sqrt{\delta/\eps}(e^{s+i\varphi},e^{-s+i(t-\varphi)})$, where $\varphi$ is a real function of $s$ and $t$. Plugging this into \eqref{LocalParTrans} yields
\begin{align*}
\begin{pmatrix}\dot{\varphi}\lambda_i \\ (1-\dot{\varphi})\mu_i\end{pmatrix} = \frac{\lambda_i\mu_i}{|\lambda_i|^2+|\mu_i|^2} \begin{pmatrix}\overline{\mu}_i \\ \overline{\lambda}_i\end{pmatrix}.
\end{align*}
The initial condition is given by 
\begin{align*}
\varphi(s,\theta_{m,n})=2\pi \big(\frac{m(\ell+1-i)}{p-1} +\frac{n(1-i)}{q-1}\big),
\end{align*}
and so the general solution is calculated to be 
\begin{align*}
\varphi(s,t)=2\pi \big(\frac{m(\ell+1-i)}{p-1} +\frac{n(1-i)}{q-1}\big)+\frac{e^{-2s}(t-\theta_{m,n})}{e^{2s}+e^{-2s}}.
\end{align*}
in particular, the value of the function at the end of the parallel transport is given by 
\begin{align*}
\varphi(s,0)&=2\pi \big(\frac{m(\ell+1-i)}{p-1} +\frac{n(1-i)}{q-1}\big)+\frac{-e^{-2s}\theta_{m,n}}{e^{2s}+e^{-2s}}\\
&=\frac{2\pi}{e^{2s}+e^{-2s}}\Big(\frac{m(\ell+1-i)e^{2s}+m(\ell-i)e^{-2s}}{p-1}+\frac{n(1-i)e^{2s}-ine^{-2s}}{q-1}\Big).
\end{align*}
This describes the argument of the $\lambda_i$-component of $\vc{0}{m,n}^{\text{pr}}$ (or the negative of the $\mu_i$-component), and is in agreement with our expectation: as $s\rightarrow\infty$ this neck region joins the $\lambda_i$-axis, where we know that the $\lambda_i$-component of $\vc{0}{m,n}^{\text{pr}}$ has argument $2\pi \big(\frac{m(\ell+1-i)}{p-1} +\frac{n(1-i)}{q-1}\big)$. Similarly, as $s\rightarrow -\infty$, this neck region joins the $\mu_i$-axis, where we know that the negative of the $\mu_i$-component of $\vc{0}{m,n}^{\text{pr}}$ has argument $2\pi \big(\frac{m(\ell-i)}{p-1} -\frac{in}{q-1}\big)$. Note that $\arg \lambda_1=\arg\lambda_2=\dots=\arg\lambda_\ell$, and similarly for the arguments of $\mu_i$. We therefore see that, if we view the smooth fibre as being glued from cylinders as in \cite[Section 3.2]{HabMilnorFibreHMS} and coordinatise the middle cylinder as upwards\footnote{Note that we have reversed the orientation of the diagram in comparison to \cite[Figure 2]{HabMilnorFibreHMS} so that the orientation of the surface agrees with the orientation of the page.} being in the positive $\lambda_i$ direction, that the Lagrangian $\vc{0}{m,n}$ enters the middle cylinder with $\lambda_\ell$ argument $-\frac{2\pi\ell n}{q-1}$ from the left (which we are thinking of as being the $\ell^\text{th}$ neck region), and winds upwards -- increasing the argument of $\lambda_i$ -- by $\frac{2\pi\ell m}{p-1}+\frac{2\pi\ell n}{q-1}$ degrees to exit the right hand side of the cylinder (the first neck region) at $\frac{2\pi\ell m}{p-1}$ degrees. Whilst more modification is needed to reach the final configuration for our collection of vanishing cycles, this winding is visible in the central cylinder of \cref{fig:milnorfiberexample}.
\begin{rmk}
It should be emphasised that \cref{fig:milnorfiberexample} is just the ribbon graph associated to the basis of homology given by the exact Lagrangians, together with one of the non-exact Lagrangians. It doesn't matter which non-exact Lagrangian one picks to form this graph, since they all represent the same homology class and will result in the same diagram. This is convenient since one can view the neck regions where there is monodromy as neighbourhoods of Lagrangians in order to depict the winding in these regions. 
\end{rmk}
In order to fully describe the curve $\vc{0}{m,n}^{\text{pr}}$ in the smooth fibre, we run analogous arguments about the other neck regions in which the Lagrangian passes -- namely, the discs deleted about the point $\zeta^{m\ell}\sqrt[\frac{p-1}{\ell}]{\varepsilon}$ on the $\lambda_1$-axes and the point $\eta^{n\ell}\sqrt[\frac{p-1}{\ell}]{\varepsilon}$ on the $\mu_\ell$-axis. In the neck region corresponding to $\vc{\mu_1\fact_1}{m}$ we run the same argument as above, but this time with local coordinate $\lambda_1'$, where $\lambda_1=\zeta^{m\ell}\lambda_{1,\text{crit}}^+-\lambda_1'$, and in this case it is the coordinate $\lambda_1'$ which interpolates from $\frac{2\pi\ell m}{p-1}$ to $2\pi \big(\frac{m(\ell-1)}{p-1} -\frac{in}{q-1}\big)$. The analogous statement is true about the neck corresponding to $\vc{\lambda_\ell\fact_\ell}{n}$. Moreover, the $\{\fact_i=\varepsilon\}$ part of the curve is essentially uninteresting, since the corresponding components of the different $\vc{0}{m,n}^{\text{pr}}$ in this segment are different lifts of the same segment in $\{u+v=\varepsilon\}$. 
\begin{rmk}
	It should be noted that the Lagrangians in \cref{fig:milnorfiberexample} wind a fixed amount in each neck region, not just those in the middle cylinder; however, the Lagrangians do not intersect in any other region. For example, working right-to-left in this figure, $\vc{0}{1,1}$ enters the left cylinder lower than $\vc{0}{0,1}$ and also winds more in the downwards direction. Indeed, the Hamiltonian isotopies were precisely chosen so that all intersections between $\vc{0}{i,j}$ happen in the central cylinder. We have therefore suppressed this winding in the left and right annuli so as to not obfuscate the more important point that there will be a relation for the Floer product. 
\end{rmk}
\begin{rmk}
From the above description, it is clear that the analogue of \cite[Remark 3.2]{HabermannSmith} holds. Namely, for two Lagrangians $\vc{0}{m,n}$ and $\vc{0}{M,N}$ with $0<m-M<\frac{p-1}{\ell}$ and $\kappa\frac{q-1}{\ell}<n-N<(\kappa+1)\frac{q-1}{\ell}$, the difference in $\lambda_i$ arguments varies monotonically from $-2\pi(n-N)\ell/(q-1)$ to $2\pi(m-M)\ell/(p-1)$. The second term is strictly positive, and the first satisfies $2\pi(\kappa+1)<-2\pi(n-N)\ell/(q-1) <-2\pi\kappa$, so the arguments of these two curves are equal modulo $2\pi$ precisely $\kappa+1$ times. In addition, they intersect transversally at each of these points. In analogy with \cref{StabOriginMorphisms}, we also do not only have morphisms when $0<m-M<\frac{p-1}{\ell}$: when $0<M-m<\frac{p-1}{\ell}$ and $\kappa\frac{q-1}{\ell}<n-N<(\kappa+1)\frac{q-1}{\ell}$ for $\kappa\geq 1$, there are precisely $\kappa$ transverse intersection. 
\end{rmk}
\subsection{Modifying the vanishing paths}
As in \cite[Section 3.4]{HabermannSmith}, the vanishing paths constructed above do not form a distinguished basis; indeed, the paths not only intersect, but do so along segments. Fortunately, the argument of \textit{loc. cit.} used to circumvent this issue can be adapted to this setting. Namely, we perturb the vanishing paths $\gamma_{m,n}$ slightly to $\gamma_{m,n}'$ so that they no longer intersect along their radial segments. In addition, for those vanishing cycles whose corresponding vanishing path has radial segment greater than than $2\pi$ we alter the vanishing paths to go \emph{outside} of the critical points to form $\gamma_{m,n}''$, although it can be shown in the same way as \cite[Lemma 3.3]{HabermannSmith} that this alteration has no effect on the corresponding vanishing cycle. Finally, we perturb the fibration slightly to separate the vanishing paths going to the origin, as well as those whose critical points correspond to the same critical value. This is summarised in the following proposition, whose proof follows \textit{mutatis mutandis} from that of the maximally graded case given in \cite[Proposition 3.4]{HabermannSmith}. 
\begin{prop}
	\label{propAfromPrelim}
	There exists a perturbation of $\wttilde_{\varepsilon}$ and a distinguished basis of vanishing paths such that the corresponding vanishing cycles are arbitrarily small perturbations of the $\vcpr{0}{m,n}$, $\vc{\mu_1\fact_1}{m}$, $\vc{\lambda_\ell\fact_\ell}{n}$ and $\vc{\lambda_i\mu_i}{}$ for $0\leq m,M\leq \frac{p-1}{\ell}-1$, $0\leq n,N\leq q-2$ as constructed above.  The $\vcpr{0}{m,n}$ are ordered by decreasing value of $\theta_{m,n}$, and by choosing the starting direction for our clockwise ordering to be $e^{i\theta}$, for $\theta$ a small positive angle, they occur before all of the other vanishing cycles.\hfill$\qed$
\end{prop}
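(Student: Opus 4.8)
The plan is to follow the proof of \cite[Proposition 3.4]{HabermannSmith} essentially verbatim, making only the adjustments forced by the extra critical points coming from the exceptional locus of $\pi$ and by the non-exactness of $\widetilde{X}$. First I would record the critical values: the critical points of types (\ref{crit1})--(\ref{crit3}) all lie over $0$, giving $\ell + \tfrac{p-1}{\ell} + \tfrac{q-1}{\ell}$ critical points there, while the critical point $(\zeta^{m\ell}\lambda_{1,\text{crit}}^+, \zeta^{m(1-\ell)}\eta^n\mu_{1,\text{crit}}^+)$ of type (\ref{crit4}) lies over $\zeta^m\eta^n c_\mathrm{crit}$, each such value supporting $\tfrac{\gcd(p-1,q-1)}{\ell}$ critical points. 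Since each $\wttilde_{i,\varepsilon}$ is tame at infinity, a perturbation of $\wttilde_\varepsilon$ supported in a neighbourhood of the critical locus leaves the Fukaya--Seidel category unchanged up to quasi-equivalence; I would use a generic such perturbation to spread the critical points over $0$, and those over each $\zeta^m\eta^n c_\mathrm{crit}$, to pairwise distinct nearby values.

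Next I would disjoin the vanishing paths. The preliminary paths $\gamma_{m,n}$ share their initial circular arc and overlap along their radial segments; perturbing each to $\gamma'_{m,n}$ --- say by rotating the radial part through a tiny $(m,n)$-dependent angle --- makes distinct paths meet only at the base point $-\delta$, and as the perturbation is $C^0$-small and avoids the critical values the associated vanishing cycle is an arbitrarily small perturbation of $\vcpr{0}{m,n}$. For those $(m,n)$ with $\theta_{m,n} > 2\pi$ the radial segment of $\gamma'_{m,n}$ still sweeps past other critical points, so I would reroute it to pass around the outside of them, obtaining $\gamma''_{m,n}$, and invoke the analogue of \cite[Lemma 3.3]{HabermannSmith} to see that this does not change the vanishing cycle. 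Ordering the radial endpoints clockwise starting from the direction $e^{i\theta}$ with $\theta > 0$ small then produces a distinguished basis of vanishing paths in which the $\vcpr{0}{m,n}$ come first --- ordered by decreasing $\theta_{m,n}$, with ties among coincident critical values broken by the perturbation of the previous step --- followed by $\vc{\mu_1\fact_1}{m}$, $\vc{\lambda_\ell\fact_\ell}{n}$ and $\vc{\lambda_i\mu_i}{}$ over the perturbations of $0$.

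I expect the main obstacle to be the rerouting step together with the analogue of \cite[Lemma 3.3]{HabermannSmith}: one must check that sliding a vanishing path around a critical point whose vanishing cycle is disjoint from the cycle under consideration does not alter that cycle. This rests on two inputs. The first is the description of the monodromy of parallel transport around a small arc as a product of Dehn twists supported in the neck regions, where the symplectic form $\omega$ reduces to $\tfrac{i}{2}(\diff\lambda_i \wedge \diff\bar{\lambda}_i + \diff\mu_i \wedge \diff\bar{\mu}_i)$, combined with the explicit description of $\vcpr{0}{m,n}$ in $\Sigma'$ that makes the relevant disjointness visible. The second is the absence of disc and sphere bubbling in the pseudoholomorphic moduli problems underlying these isotopies, despite $\widetilde{X}$ being non-exact; this is precisely what the conditions recorded in \cref{CrepResolutionSection} --- exactness of $\Sigma$ and vanishing of $\pi_2(\Sigma)$ and $\pi_2(\Sigma, V_i)$ --- are designed to rule out. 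With those two points secured, every estimate and Hamiltonian isotopy appearing in \cite[Section 3.4]{HabermannSmith} carries over without change.
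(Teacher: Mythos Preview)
Your proposal is correct and follows essentially the same route as the paper, which likewise reduces to \cite[Proposition 3.4]{HabermannSmith} via the perturbations $\gamma_{m,n}\to\gamma'_{m,n}\to\gamma''_{m,n}$, the analogue of \cite[Lemma 3.3]{HabermannSmith}, and a small perturbation of the fibration to separate coincident critical values. One minor point: your second ``input'' about bubbling is unnecessary here, since the rerouting step involves only parallel transport and the Picard--Lefschetz description of monodromy as Dehn twists, not any pseudoholomorphic moduli problem --- the conditions of \cref{CrepResolutionSection} enter only later, when one computes Floer products.
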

\subsection{Isotoping the vanishing cycles and computing the morphisms}
Now that we have constructed a distinguish basis of vanishing paths, we must compute the relevant Floer cohomology groups. In order to do this, we must Hamiltonian isotope the Lagrangians in the smooth fibre so that they intersect transversally. Fortunately, two Lagrangians only intersect non-transversally in the case of $\vc{0}{m,n}$ and $\vc{0}{M,N}$ where $m=M$ or $n\equiv N\bmod \frac{q-1}{\ell}$, where they intersect along segments. This is clear from the description of $\vc{0}{m,n}$ and $\vc{0}{M,N}$ in $\Sigma'$ as $f_{m,n}(\vc{0}{0,0})$ and $f_{M,N}(\vc{0}{0,0})$, respectively. After describing the relevant perturbations, we will have constructed the \emph{final} vanishing cycles, denoted by $\mathcal{A}_{\widecheck{\overline{\Gamma}}}$. \\

We begin by describing the isotopies of the Lagrangians on the $\lambda_1$-axis. Namely, to achieve transversality, we isotope each $\vc{0}{m,n}$ anticlockwise in the $\lambda_1$ direction between the two necks by an amount proportional to $n$, correspondingly altering the curve at the boundaries of the neck region to keep the curve continuous. To make this isotopy Hamiltonian, we push the curve the corresponding amount in the clockwise direction around the $\vc{\lambda_1\mu_1}{}$ neck region. The picture in this case is essentially \cite[Figure 8]{HabermannSmith}.\\
To achieve transversality in the $\mu_\ell$ line, we perform essentially the same procedure as above, and of that in the maximally graded case; however, here we isotope the curves in the anticlockwise $\mu_\ell$ direction by an amount proportional to $m+\frac{p-1}{\ell}\lfloor\frac{n\ell}{q-1}\rfloor$ 
in the region between the two necks, and then the curve is pushed clockwise in the $\vc{\lambda_\ell\mu_\ell}{}$ neck region to compensate and make the isotopy Hamiltonian. See Figure \ref{fig:milnorfiberexample} for an example\footnote{The construction of this smooth fibre from gluing cylinders and strips is not the one given in \cite[Section 6.1]{HabStackyCurves}, although is equivalent to this, since it results from a different choice representative of nodes in the construction of the quotient ribbon graph in \textit{loc. cit.}}, where we present the surface as being glued from cylinders. 
 \begin{figure}[H]
 	\centering
 	\includegraphics[width=0.8\linewidth]{LoopMilnorFibreEdited} 	
 	\begin{tikzpicture}
 	\draw (-15,-1) node[]{$\vc{0}{0,0}=$};
 	\draw[thick] (-14.25,-1) -- (-13.5,-1);
 	\draw (-15,-1.5) node[]{$\vc{0}{1,0}=$};
 	\draw[thick, dash pattern={on 3pt off 1pt},color1] (-14.25,-1.5) -- (-13.5,-1.5);
 	
 	\draw (-12,-1) node[]{$\vc{0}{0,1}=$};
 	\draw[thick, dash dot, red ] (-11.25,-1) -- (-10.5,-1);
 	\draw (-12,-1.5) node[]{$\vc{0}{1,1}=$};
 	\draw[very thick, dash pattern={on 1pt off 1pt},blue] (-11.25,-1.5) -- (-10.5,-1.5);
 	\end{tikzpicture}
 	\caption{Smooth fibre for $\wt=x^5y+y^3x$ with $\ell=2$ as being glued from cylinders -- top and bottom of each cylinder are identified. This is a genus three surface with three boundary punctures. Vanishing cycles and spin structures are shown. The light green (big) and light blue (small) triangles contribute to the relation corresponding to (iii) in \cref{figLoopQuiver}. The analogue of \cref{fig:loopsingmf} in this situation is \cref{fig:loopsingularfibrespecificexample}.}
 	\label{fig:milnorfiberexample}
 \end{figure}

\begin{figure}[H]
	\centering
	\includegraphics[width=0.3\linewidth]{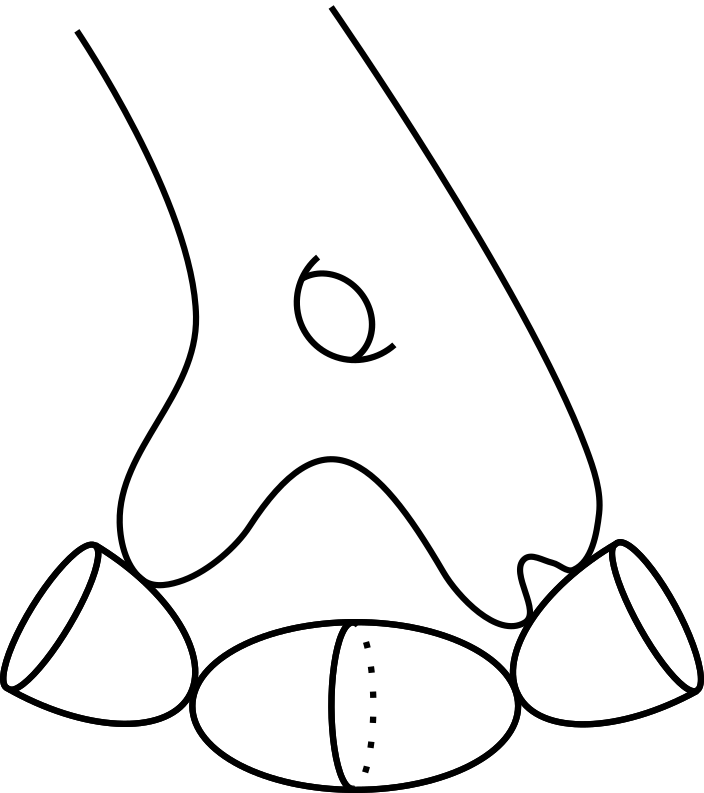}
	\caption{The analogue of \cref{fig:loopsingmf} for $\wt=x^5y+y^3x$  with $\ell=2$. In particular, the vertical Lagrangians in \cref{fig:milnorfiberexample} are contracted to the intersection points of the irreducible components in this figure as one goes  from the reference point to the origin along the negative real axis.}
	\label{fig:loopsingularfibrespecificexample}
\end{figure}

The resulting set of vanishing cycles, $\vc{0}{m,n}$, are then pairwise disjoint, except on the $\vc{\lambda_i\mu_i}{}$ neck regions, where they are either disjoint or intersect transversally. Moreover, we will see momentarily that all of these intersection points are graded in degree $0$, meaning that the differential on the Floer complex vanishes, and all intersection points survive to cohomology. Once this has been shown, it is then straightforward to see that, additively, the objects in $\mathcal{A}_{\widecheck{\overline{\Gamma}}}$ match that of $\mathcal{B}$ via
	\begin{equation}
\label{ObjectMatch}
\begin{aligned}
\vc{0}{m,n} &\leftrightarrow \obj{0}{i,j}
\\ \vc{\mu_\ell\fact}{m} &\leftrightarrow \obj{x}{i}[3]
\\ \vc{\lambda_1\fact}{n} &\leftrightarrow \obj{y}{j}[3]
\\ \vc{\lambda_k\mu_k}{} &\leftrightarrow \obj{w_r}{}[3]
\end{aligned}
\quad \text{with} \quad
\begin{aligned}
i+m&=p-1
\\ j+n&=q-1\\
r+k&=\ell.
\end{aligned}
\end{equation}
Note that, in the above, the last identification is taken modulo $\ell$, so that $\vc{\lambda_\ell\mu_\ell}{}$ is identified with $\obj{w_\ell}{}[3]$. All that is left to do to complete the theorem in the undeformed case is show that the objects are graded, so that the corresponding Floer complexes are graded, and then check that the morphisms compose in the claimed way.
\subsection{Brane structures}
In this subsection, we specify the brane structure on the Lagrangians we consider by determining a grading and spin structure. This discussion is also valid for \cref{ChainAmodel} and \cref{BPAmodel}. \\
By construction, the morphisms in the B--model are  $\Z$-graded, and here we grade the Lagrangians so the corresponding Floer cohomology groups will similarly be $\Z$-graded. Recall (\cite{SeidelGraded}) that a symplectic manifold $(M,\omega)$ is \emph{gradable} if $K_M^{\otimes 2}\simeq \mathcal{O}_M$, and a grading is a choice of such trivialisation. This is possible if and only if $2c_1(M)=0$, and so, in particular, all Calabi--Yau manifolds are gradable. Given a trivialising section $\Theta\in \Gamma(M,K_M^{\otimes2})$, there is a map
\begin{align*}
\alpha_M:\mathrm{LGr}(TM)&\rightarrow S^1\\
L_m&\mapsto \mathrm{arg}(\Theta|_{L_m}).  
\end{align*}
For any Lagrangian $L\hookrightarrow M$, there is a natural map $L\rightarrow \mathrm{LGr}(TM)$ given by taking the class of the tangent space of $L$ at each point, and we say that $L$ is \emph{gradable} with respect to the grading on $M$ if the map $L\rightarrow \mathrm{LGr}(TM)\xrightarrow{\alpha_M} S^1\simeq\R/\Z$ lifts to a map $\alpha_L^{\#}:L\rightarrow \R$. This is possible if and only if the Maslov class of $L$ vanishes, i.e. the map $L\rightarrow S^1$ is homotopic to the constant map. \\
For $\Sigma$ a real 2-dimensional manifold, i.e. a surface, gradings correspond naturally to line fields \cite[Section 13(c)]{SeidelBook}, meaning a section of $\P_{\R}(T\Sigma)\simeq \mathrm{LGr}(T\Sigma)$. Given a line field $\eta$ grading $\Sigma$, a Lagrangian $\gamma:S^1\rightarrow \Sigma$ is gradable with respect to this line field if and only if $\gamma^*\eta$ and $\gamma^*TL$ are homotopic in $\gamma^*\P_\R(T\Sigma)$. Given a graded surface $\Sigma$ with two graded Lagrangians $\alpha_{L_i}^{\#}:L_i\rightarrow \R$ for $i=0,1$, \cite[Example 11.20]{SeidelBook} shows that a transverse intersection point $x\in L_0\pitchfork L_1$ has degree 
\begin{align*}
\lfloor \alpha_{L_1}^{\#}(x)-\alpha_{L_0}^{\#}(x)\rfloor +1
\end{align*}
in $CF^*(L_0,L_1)$. \\

In our situation, the required choice of grading is the one which restricts to $\Sigma$ from the grading of $\widetilde{X}$. By the crepancy of the resolution $\pi$, this has vanishing first Chern class, and, since $\widetilde{X}$ is diffeomorphic to the Milnor fibre of $x^2+y^2+z^\ell$, its homotopy type is a bouquet of $2$-spheres. This implies that $\widetilde{X}$ has vanishing first cohomology, and so, in particular, there is a unique choice of trivialisation of $K_{\widetilde{X}}^{\otimes 2}$ up to homotopy\footnote{If $K_{\widetilde{X}}^{\otimes 2}\simeq \mathcal{O}_{\widetilde{X}}$ the homotopy classes of trivialisations form a torsor for $H^1(\widetilde{X})$.}. We are therefore justified in the use of the definite article when talking about gradings of $\widetilde{X}$. Restricting the trivialisation of $K_{\widetilde{X}}^{\otimes 2}$ to $\Sigma$ uniquely determines a line field on $\Sigma$ with respect to which our Lagrangians are gradable. Conversely, any line field with respect to which each vanishing cycle is gradable is homotopic to the one which cones from restricting the trivialisation of $K_{\widetilde{X}}^{\otimes 2}$. This uses the more general fact that, if one has a collection of Lagrangians on a surface which spans the first homology, then there is a unique homotopy class of line field with respect to which this collection is gradable -- cf. \cite[Section 1]{LekiliPolishchuk}.\\
That these Lagrangians are gradable means that the argument between the line field and the projectivised tangent direction of a given Lagrangian is within a sufficiently small range. For example, in the presentation of the smooth fibre as being glued from annuli as in \cref{fig:milnorfiberexample}, the line field can be taken to be approximately horizontal on each annulus and approximately parallel to the boundary along each attaching strip. This is proven in \cite[Section 6.1]{HabStackyCurves}; however, in this case, it also follows from the fact the Lagrangians are clearly gradable with respect to this line field, together with the above fact about uniqueness of line fields. \\

As in the maximally graded case, we can grade the Lagrangians such that $\alpha_{L_i}^{\#}$ is valued between $0$ and $1/2$. Moreover, if $i>j$ in the ordering on the vanishing cycles given in \cref{propAfromPrelim}, then $1/2> \alpha_{L_i}^{\#}>\alpha_{L_j}^{\#}>0$, and so all intersection points are graded in degree zero. 	\\

Now that we have fixed a grading, in which all intersection points are in degree zero, we discuss spin structures and the consequences for signs. We refer to \cite[Chapter II, Section 11]{SeidelBook} and \cite[Section 4.6]{AOK} for more details, where the latter reference also deals with non-exact Lagrangians. In the case at hand, spin structures on $S^1$ are given by double covers, of which there are two -- the trivial and non-trivial. Since the data of the Lagrangian brane in the Milnor fibre must be the restriction of brane data from the thimble, we insist on equipping our Lagrangians with the spin structure which extends to the thimble, i.e. the non-trivial double cover of $S^1$. In such cases, it is convenient to record this spin structure by $\star\in L$, marking the point where the double cover is ramified. Namely, the spin structure on $L$ restricted to $L\setminus\{\star\}$ is equipped with a fixed trivialisation. It does not matter where one chooses the star recording the spin structure to be, as long as it is not at an intersection point with another Lagrangian. For the Lagrangians corresponding to critical points whose critical value is the origin, we take the star to be anywhere away from an intersection point with another Lagrangian. For all other Lagrangians, we take the stars recording the spin structure to be away from any intersections and discs contributing to the product (i.e. on the sections of the Lagrangians which are lifts of the line $\{u+v=\varepsilon\}$ by the $\frac{(p-1)(q-1)}{\ell}$-fold cover).\\
In our case, where all morphisms are graded in degree zero, the sign contribution for a holomorphic disc $u$ with convex corners $L_{i_j}\pitchfork L_{i_{j+1}}$ (with the indices counted cyclically) contributing to $\mu_k$ is
\begin{align*}
(-1)^{\nu(u)},
\end{align*}
where $\nu(u)$ is the number of stars on the boundary. In general, one must also take into account the degrees of the intersection points, as well as the orientation of the boundary of $u$ with respect to the orientation of the $L_{i_j}$. It should therefore be reiterated that that this sign count is specific to our situation (or, more generally, where all morphisms are graded in even degree). Moreover, the (non-)exactness of the symplectic form or Lagrangian vanishing cycles does not play a role in determining these signs. 
\subsection{Composition and completion of proof}\label{CompositionSection}

Suppose $L_0$, $L_1$ and $L_2$ are three (final) vanishing cycles such that $L_0 < L_1 < L_2$ with respect to the ordering on the category $\mathcal{A}_{\widecheck{\overline{\Gamma}}}$ (we are calling them $L$ rather than $V$ to avoid conflict with our earlier notation for specific cycles).  We need to compute the composition
\begin{equation}
\label{eqFloerProduct}
HF^*(L_1, L_2) \otimes HF^*(L_0, L_1) \rightarrow HF^*(L_0, L_2),
\end{equation}
which is defined by counting pseudo-holomorphic triangles, and Seidel \cite[Section (13b)]{SeidelBook} shows that, in the exact setting, this can be done combinatorially by counting triangular regions bounded by the $L_i$. In our non-exact setting, the assumptions of \cref{CrepResolutionSection} ensure that the situation at hand is similarly combinatorial. \\

By the ordering on the vanishing cycles, the only possibility for \eqref{eqFloerProduct} to be non zero is if $L_0=\vc{0}{m,n}$ and $L_1=\vc{0}{M,N}$ where either $m\geq M$ and $n\geq N$ or $M<m$ and $n\geq N+\frac{q-1}{\ell}$. There are then four possibilities for $L_2$ which would potentially give a non-zero composition:
\begin{enumerate}[(i)]\label{HolTriangleCases}
	\item $L_2=\vc{\lambda_r\mu_r}{}$ for some $i=1,\dots, \ell$,
	\item $L_2= \vc{\lambda_1\fact}{t}$, for $t\equiv n\bmod \frac{q-1}{\ell}$
	\item $L_2=\vc{\mu_\ell\fact}{m}$, 
	\item $L_2=\vc{0}{r,s}$ for some $(r,s)\neq (M,N)$ and either
	\begin{enumerate}[(a)]
		\item $M\geq r$ and $N\geq s$, or 
		\item $M<r$ and $N\geq s+\frac{q-1}{\ell}$.
	\end{enumerate} 
\end{enumerate}
In the cases (ii) and (iii), there is a single obvious holomorphic disc contributing to the product, regardless of the ranks of the cohomology groups, and this is shown to be the only such disc by the same methods as in the maximally graded case. In case (iv), all Lagrangians involved are exact, so the product only depends on the intersection points of the Lagrangians, with signs determined by the gradings on the Lagrangians as well as the spin structures. Since we have chosen these to be away from any holomorphic discs contributing to the product, all signs are $+1$. Moreover, all of the Lagrangians in these cases are exact, and so one can work over $\C$, rather than the Novikov field, simply by rescaling. The remaining case, in contrast, is more complicated.\\

In case (i), there are $\mathrm{rank}_{\C}HF^*(\vc{0}{m,n},\vc{0}{M,N})$ holomorphic triangles contributing to \eqref{eqFloerProduct}, where the output of each is a scalar multiple of the intersection point $\vc{0}{M,N}\pitchfork\vc{\lambda_r\mu_r}{}$. The fact that these are all such maps follows from the same argument as in the maximally graded case, although where $L_{\cup}$ is now three circles, one of which intersects the other two circles once, and two circles intersect each other $\mathrm{rank}_{\C}HF^*(\vc{0}{m,n},\vc{0}{M,N})$ times. The composition argument follows as in the maximally graded case when $\mathrm{rank}_{\C}HF^*(\vc{0}{m,n},\vc{0}{M,N})=1$. On the other hand, consider the case where $\mathrm{rank}_{\C}HF^*(\vc{0}{m,n},\vc{0}{M,N})=2$. Then, $m\geq M+\frac{p-1}{\ell}$, $n \geq N$, and, without loss of generality, consider $L_0=\vc{0}{\frac{p-1}{\ell},0}$, $L_1=\vc{0}{0,0}$ and $L_2=\vc{\lambda_r\mu_r}{}$. Then, 
\begin{align*}
HF^*(L_0,L_1) =\mathrm{span}_{\Lambda_\C}\{x^{\frac{p-1}{\ell}},y^{\frac{q-1}{\ell}}\},
\end{align*}
but $HF^*(L_0,L_2)=\mathrm{span}_{\Lambda_\C}\{q_k\}$, and so there must be a relation. Note that we must work over the Novikov field here to account for $L_2$ being potentially non-exact\footnote{We will see below $\vc{\lambda_{\frac{l+1}{2}}\mu_{\frac{l+1}{2}}}{}$ is exact for $\ell$ odd, but it does no harm to include this case in the discussion of non-exact Lagrangians.} \\

Since the symplectic form $\omega_{\Sigma}=\omega|_{\Sigma}$ is exact on fibres, let $\lambda$ be a fixed primitive of this form. Then, the contributions to \eqref{ObjectMatch} are:
\begin{align}
\label{FloerRelationI}
c_r\otimes x^{\frac{p-1}{\ell}}&\mapsto \pm T^{\omega([u_1])}q_r\\
\label{FloerRelationII}
c_r\otimes y^{\frac{q-1}{\ell}}&\mapsto \mp T^{\omega([u_2])}q_r,
\end{align}
where $u_i$ is a contributing holomorphic triangle and $HF^*(L_1,L_2)=\mathrm{span}_{\Lambda_\C}\{c_r\}$. Since all intersection points can be taken to be in degree zero, the sign is determined purely by how many stars recording spin structures the boundary of the relevant disc passes through. As already mentioned, we arrange for all the stars recording the spin structures of the exact Lagrangians to be away from any holomorphic discs, and so, once we have graded the Lagrangians in question, only the spin structure of $L_2$ can affect the sign. Regardless of where this star is chosen, exactly one boundary component of $u_1$ or $u_2$ can pass through it, and so the signs of \eqref{FloerRelationI} and \eqref{FloerRelationII} are opposite -- cf. \cref{fig:milnorfiberexample}. \\

Now, since $L_2$ is the only non-exact Lagrangian in the triangle, observe that $\omega([u_i])=\lambda([\alpha_i])$, where $\alpha_i$ is the segment of $L_2$ on the boundary of $u_i$ and that $\alpha_1-\alpha_2=L_2$ in homology. Putting this all together, we see that there is a relation 
\begin{align*}
c_r\otimes x^{\frac{p-1}{\ell}}+ T^{\lambda([L_2])}c_r\otimes y^{\frac{q-1}{\ell}} =0.
\end{align*}
To complete the argument, we must evaluate $\lambda([L_2])$ and then specialise the Novikov parameter. \\

Towards this end, observe that there is an exact representative of the homology class $L_2$. In the case of $\ell$ even, it is the waist curve half way between\footnote{This corresponds to the equator on the exceptional curve $C_{\frac{\ell}{2}}$ above the origin under parallel transport.} $\vc{\lambda_{\frac{\ell}{2}}\mu_{\frac{\ell}{2}}}{}$ and $\vc{\lambda_{\frac{\ell}{2}+1}\mu_{\frac{\ell}{2}+1}}{}$, and in the case of $\ell$ odd, it is $\vc{\lambda_{\frac{\ell+1}{2}}\mu_{\frac{\ell+1}{2}}}{}$. Moreover, let $C_r$ be the $r$-th curve in the exceptional divisor of $\widetilde{X}$, and observe that, by parallel transport (and the definition of $\omega$ on $\widetilde{X}$), $\omega([C_r])=\lambda([\vc{\lambda_{r}\mu_{r}}{}])-\lambda([\vc{\lambda_{r+1}\mu_{r+1}}{}])=2\pi$. By comparing with the reference exact Lagrangian, we have that $\lambda([\vc{\lambda_r\mu_r}{}])=\pi(\ell+1-2r)$ regardless of the parity of $\ell$. To see this, note that, for $\ell$ odd, $\lambda([V_{\frac{\ell+1}{2}}])=0$ since it is exact. Then, $\lambda([V_{\frac{\ell+1}{2}\pm r}])=\pm2\pi r$ since the symplectic area of each holomorphic cylinder between two consecutive vanishing cycles has area $2\pi$, as demonstrated above. An analogous computation yields the claimed area in the case of $\ell$ even. \\
Finally, we must specialise the Novikov parameter. To do this, we set $T=e^{\frac{\sqrt{-1}}{\ell}}$ to obtain the relation (iii) in \cref{figLoopQuiver} under the identification \eqref{ObjectMatch}. That this specialisation preserves Hamiltonian invariance follows from the fact that there is no holomorphic cylinder between two Lagrangians in our collection which has symplectic area $2\pi\ell$. The cases of rank higher than two then follow by composition relations which have already been established. \\

Now that we have set it up, the proof of the main theorem in the loop case follows the same argument as in \cite[Theorem 1]{HabermannSmith}.
		
\begin{thm}[\cref{mainTheorem}, undeformed loop polynomial case]
	Under \eqref{ObjectMatch}, the $\Z$-graded $A_\infty$-category $\mathcal{A}_{\widecheck{\overline{\Gamma}}}$ is described by the quiver with relations in Figure \ref{figLoopQuiver} and is formal. In particular, by Theorem \ref{LoopEndAlgebra} it is quasi-equivalent to $\mathcal{B}$, and hence there is an induced quasi-equivalence
	\[
	\mathrm{mf}(\C^2, \Gamma, \w) \simeq \mathcal{FS}(\wttilde).
	\]
\end{thm}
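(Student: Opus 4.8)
The plan is to upgrade \cref{ALoopUngraded} from its ungraded form to a fully graded statement and then conclude by formality and generation. Since \cref{ALoopUngraded} already identifies the underlying quiver and relations of a model of $\mathcal{A}_{\widecheck{\overline{\Gamma}}}$ with those of Figure \ref{figLoopQuiver}, equivalently with the cohomology-level endomorphism algebra of $\mathcal{B}$ from \cref{LoopEndAlgebra}, the only remaining input is to fix the $\Z$-grading on the Floer cohomology groups so that every generating morphism lands in degree $0$.

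To this end, I would equip $\widetilde{X}$ with the grading induced by the homotopically unique trivialisation of $K_{\widetilde{X}}^{\otimes 2}$ --- available because crepancy of $\pi$ forces $c_1(\widetilde{X})=0$ and because the bouquet-of-spheres homotopy type of $\widetilde{X}$ gives $H^1(\widetilde{X})=0$ --- restrict it to the reference fibre $\Sigma$, and check that each final vanishing cycle, being a non-separating simple closed curve, has trivial Maslov class and hence admits a grading $\alpha_L^{\#}$ valued in $[0,\tfrac12)$, exactly as in the maximally graded case of \cite{HabermannSmith}. Combining this normalisation with the clockwise ordering of vanishing cycles of \cref{propAfromPrelim} --- under which $1/2 > \alpha_{L_1}^{\#} > \alpha_{L_0}^{\#} > 0$ pointwise at every intersection point whenever $L_0 < L_1$ --- the surface degree formula gives each transverse intersection point $x$ the degree $\lfloor \alpha_{L_1}^{\#}(x) - \alpha_{L_0}^{\#}(x) \rfloor + 1 = 0$ in $CF^*(L_0,L_1)$, including those supported in the $\vc{\lambda_i\mu_i}{}$ necks and those involving the origin cycles. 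Thus $\mathcal{A}_{\widecheck{\overline{\Gamma}}}$ is a $\Z$-graded $A_\infty$-category concentrated in degree $0$, whose graded cohomology algebra coincides with that of $\mathcal{B}$.

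From here the argument is formal: a category concentrated in a single degree is intrinsically formal, so both $\mathcal{A}_{\widecheck{\overline{\Gamma}}}$ and $\mathcal{B}$ are quasi-equivalent to the quiver algebra of Figure \ref{figLoopQuiver}, hence to each other. Passing to twisted complexes and invoking \cref{BGenerates}, I would then conclude
\begin{align*}
\mathcal{FS}(\wttilde) = \Tw \mathcal{A}_{\widecheck{\overline{\Gamma}}} \simeq \Tw \mathcal{B} \simeq \mathrm{mf}(\C^2,\Gamma,\w),
\end{align*}
which is the asserted quasi-equivalence.

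The step I expect to be the main obstacle is the grading computation: one must verify that the line field on $\Sigma$ restricted from $\widetilde{X}$ is, up to homotopy, precisely the one with respect to which all the vanishing cycles are gradable with $\alpha^{\#}$ confined to $[0,\tfrac12)$, and that the ordering of \cref{propAfromPrelim} remains compatible with this normalisation at every intersection point, in particular in the neck regions where the cycles wind. The combinatorial count of holomorphic polygons, the sign analysis (with the nontrivial spin structures), and the formality and generation bookkeeping all transfer essentially verbatim from \cite{HabermannSmith}, so it is the grading bookkeeping that carries the genuine work in the non-exact, orbifold setting.
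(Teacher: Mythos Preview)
Your proposal is correct and follows essentially the same route as the paper: both reduce the theorem to \cref{ALoopUngraded} plus a grading computation, invoke the crepancy of $\pi$ and the bouquet-of-spheres homotopy type of $\widetilde{X}$ to pin down the line field on $\Sigma$ uniquely, verify that all intersection points sit in degree~$0$ via the normalisation $\alpha_L^{\#}\in[0,\tfrac12)$, and then conclude by intrinsic formality and \cref{BGenerates}. One small correction: the claim that a vanishing cycle has trivial Maslov class \emph{because} it is non-separating is not valid reasoning---Maslov triviality depends on the winding of the curve relative to the chosen line field, not on separation---so the gradability really does rest on the explicit description of the restricted line field (approximately horizontal on each annulus, parallel to the boundary on the attaching strips) together with the shape of the cycles, exactly as in \cite{HabermannSmith}.
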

\section{Chain B-model}
\label{ChainBmodel}
In this section, we study the dg-category of $L$-graded matrix factorisations of $\w=x^py+y^q$, where $L$ this time is freely generated by $\x$ and $\y$ modulo the relation
\begin{align*}
\frac{p}{\ell}\x=\frac{q-1}{\ell}\y,
\end{align*}
where $\ell\leq d=\gcd(p,q-1)$ is again the index of $\Gamma$ in $\Gamma_{\w}$. Analogously to the loop case, we consider $S=\C[x,y]$ as an $L$-graded ring with $|x|=\x$ and $|y|=\y$, so that $\w$ is quasihomogeneous of degree $\vec{c}$, and write $R=S/(\w)$. Again, $R$ is a graded Gorenstein ring of Krull dimension one and Gorenstein parameter $\alpha=\x+\y-\vec{c}$.\\

Analogously to the loop case, we write $\w=yw_1\dots w_\ell$, where 
\begin{align*}
w_r=x^{\frac{p}{\ell}}-e^{\frac{\pi\sqrt{-1}}{\ell}}\eta^ry^{\frac{q-1}{\ell}}
\end{align*}
for $\eta$ a fixed primitive $\ell^{\text{th}}$ root of unity. With this, there are $\ell+1$ matrix factorisations coming from ($\Gamma$-equivariantly) factoring $\w$. These correspond to 
\begin{align*}
\obj{y}{}^\bullet = ( \cdots \rightarrow S(-\vec{c}) \xrightarrow{w} S(-\vec{y}) \xrightarrow{y} S \rightarrow \cdots ),
\end{align*}
as well as the matrix factorisations
\begin{align*}
\obj{{w_r}}{}^\bullet = ( \cdots \rightarrow S(-\vec{c}) \xrightarrow{\w/w_r} S(-\frac{p}{\ell}\x) \xrightarrow{w_r} S\rightarrow \cdots ).
\end{align*}
In addition, we also consider the objects
\begin{align*}
\obj{y}{j}&=\obj{y}{}((j+1-q)\y)
\end{align*} 
for $j=q-\frac{q-1}{\ell},\dots, q-1$. Similarly to the loop case, for $1\leq i\leq p-1$, $q-\frac{q-1}{\ell}\leq j\leq q-1$ and $k=\lfloor\frac{(i-1)\ell}{p}\rfloor$ we write 
\begin{align*}
I_{i,j}&= (x^i)+\sum_{t=1}^k(x^{i-t\frac{p}{\ell}}y^{j-(\ell-t)\frac{q-1}{\ell}})+ (y^{j-(\ell-k-1)\frac{q-1}{\ell}})\\
&= (x^i, x^{i-\frac{p}{\ell}}y^{j-(\ell-1)\frac{q-1}{\ell}}, \dots, x^{i-k\frac{p}{\ell}}y^{j-(\ell-k)\frac{q-1}{\ell}},y^{j-(\ell-k-1)\frac{q-1}{\ell}})
\end{align*} 
and the $L$-graded $R$-modules
\begin{align*}
R(i\x+(j+1)\y)/I_{i,j}.
\end{align*}
The corresponding rank $(k+2)$ matrix factorisation, which we denote by $\obj{0}{i,j}$, is given by stabilising this module beginning with 
\begin{align*}
R((j+1)\y)\oplus\bigoplus_{t=0}^{k-1}R(\vec{c})\oplus R(\vec{c}+i\x-(k+1)\frac{q-1}{\ell}\y)
\xrightarrow{\begin{pmatrix}
	x^i& \dots& y^{j-(\ell-k-1)\frac{q-1}{\ell}}
	\end{pmatrix}}
R(i\x+(j+1)\y).
\end{align*}

From this, it is straightforward to check that the maps defining the matrix factorisation are given in even degree by 
\begin{align}\label{ChainEvenDifferential}
\diff_0=\begin{pmatrix}
y^{j-(\ell-1)\frac{q-1}{\ell}} & 0  & \dots & 0 &x^{p-i}y\\
-x^{\frac{p}{\ell}}& y^{\frac{q-1}{\ell}} & \dots& 0 & 0\\
0 & -x^{\frac{p}{\ell}} & \dots &0 & 0\\
0 & 0 &   \dots & 0& 0\\
\vdots &\vdots   & \ddots& \vdots  & \vdots \\
0 & 0 & \hdots & y^{\frac{q-1}{\ell}} & 0\\
0 & 0 &  \hdots& -x^{i-k\frac{p}{\ell}} & y^{q-j+(\ell-k-1)\frac{q-1}{\ell}}\\
\end{pmatrix},
\end{align}
and in odd degree by $\diff_1=\mathrm{Adj}(\diff_0)$. Explicitly, we have that $\obj{0}{i,j}$ corresponds to the matrix factorisation 
\begin{center}
	\begin{tikzcd}[row sep=4ex, column sep=5ex]
	S(\vec{c}-\frac{q-1}{\ell}\y) \ar[d, phantom, description, "\bigoplus"]  \ar[dd, phantom, description, "\cdots\hskip7ex\phantom{\hskip20ex\cdots}"] &S((j+1)\y)  \ar[d, phantom, description, "\bigoplus"] & S(2\vec{c}-\frac{q-1}{\ell}\y)  \ar[d, phantom, description, "\bigoplus"] \ar[dd, phantom, description, "\cdots\hskip7ex\phantom{\hskip-40ex\cdots}"]
	\\ 
	\bigoplus_{t=0}^{k-1} S(\vec{c}-\frac{q-1}{\ell}\y) \ar[d, phantom, description, "\bigoplus"] \ar[r, shorten >=2ex, shorten <=2ex,"\diff_0"] &\bigoplus_{t=0}^{k-1}S(\vec{c}) \ar[d, phantom, description, "\bigoplus"] \ar[r, shorten >=2ex, shorten <=2ex,"\diff_1"] & \bigoplus_{t=0}^{k-1}S(2\vec{c}-\frac{q-1}{\ell}\y) \ar[d, phantom, description, "\bigoplus"]\\
	S((j+1)\y+i\x-\vec{c})&S(\vec{c}+i\x-(k+1)\frac{q-1}{\ell}\y)&S((j+1)\y+i\x)
	\end{tikzcd}
\end{center}
where, as before, the rightmost term is in cohomological degree 0, and the differentials go between the whole columns, not just the middle modules. \\

As in the maximally graded and loop cases, we are interested in a full subcategory $\mathcal{B}$ of $\mathrm{mf}(\C^2,\Gamma,\w)$ consisting of the objects described above. Namely, let $\mathcal{B}$ be the category consisting of the $\frac{p(q-1)}{\ell}+\ell$ objects 
\[
V=\{\obj{0}{i,j},\obj{y}{j}, \obj{w_1}{}[3],\dots,\obj{w_\ell}{}[3]\}_{i=1, \dots, p-1;\ j=q-\frac{q-1}{\ell}, \dots, q-1}.
\]
In the following sections we compute the morphisms between the objects in this category, culminating in the description of $\mathcal{B}$ as a quiver algebra in \cref{ChainEndAlgebra}.
\subsection{Morphisms between the $\obj{x}{}$'s, $\obj{y}{}$'s and $\obj{w_r}{}$'s}
For calculations regarding the modules $\obj{y}{}$ and $\obj{w_r}{}$, the arguments carry over from the maximally graded and loop cases with minimal alteration. 

\begin{lem}
	In $\mathrm{HMF}(\C^2, \Gamma, \w)$, we have the following:
	\begin{enumerate}[(i)]
		\item \label{ChainKIorthonormal} For any $j\in \Z$, the objects $\obj{y}{j},\dots, \obj{y}{j+\frac{q-1}{\ell}-1}$ are exceptional and pairwise orthogonal.
		\item \label{ChainKIIorthonormal} The objects $K_{w_{1}}, \dots, K_{w_{\ell}}$ are exceptional and pairwise orthogonal.
		\item \label{ChainKIKIIorthonormal}For each $j=q-\frac{q-1}{\ell},\dots, q-1$ and $r=1,\dots, \ell$, the objects $\obj{y}{j}$ and $\obj{w_r}{}$ are mutually orthogonal. \hfill\qed
	\end{enumerate}
\end{lem}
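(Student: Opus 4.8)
The plan is to follow the strategy of the maximally graded and loop cases (Sections 2.3--2.4 of \cite{HabermannSmith}, and the corresponding sections above). By Buchweitz's formula quoted there, for $L$-graded $R$-modules $M,M'$ with stabilisations $K,K'$ and $R$ Gorenstein one has $\Hom^\bullet_{\mathrm{HMF}(\C^2,\Gamma,\w)}(K,K')\simeq\mathrm{H}^\bullet\big(\Hom_{\gr-R}(K\otimes_S R,M')\big)$, computed componentwise; so for each of the finitely many ordered pairs among $\{\obj{y}{j},\obj{w_r}{}\}$ I would tensor the explicit two-periodic complex defining the source with $R$, apply $\Hom_{\gr-R}(-,M')$ term by term, compute the cohomology as an $L$-graded module, and then extract its degree-zero piece --- using the relation $\frac p\ell\x=\frac{q-1}{\ell}\y$ in $L$ to decide when that piece is nonzero. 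Since $[2]\simeq(\vec c)$ one is free to shift into any cohomological degree, so Buchweitz's bound $\bullet\gg0$ is harmless.

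For part (i): the objects $\obj{y}{j}$ are the single matrix factorisation $\obj{y}{}$ shifted internally by multiples of $\y$, so exceptionality and the pairwise orthogonality of $\obj{y}{j},\dots,\obj{y}{j+\frac{q-1}{\ell}-1}$ reduce to one computation built from the presentation $S(-\vec c)\xrightarrow{w}S(-\vec y)\xrightarrow{y}S$. As in the loop case, the componentwise $\Hom$-complex has differentials alternating between a monomial and zero, so its cohomology is a cyclic module of the form $\C[x]/(x^{p})$ up to an internal shift by a multiple of $\y$; its degree-zero part is then one-dimensional when source and target agree and vanishes for the $\frac{q-1}{\ell}-1$ distinct consecutive shifts, precisely because $\frac p\ell\x=\frac{q-1}{\ell}\y$ forces any relation $m\y=k\x$ with $0\le k<p$ and $|m|<\frac{q-1}{\ell}$ to be trivial. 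For part (ii) the $\obj{w_r}{}$ are exceptional, and the $r\neq s$ case --- the genuinely non-maximally-graded feature --- uses that $w_1,\dots,w_\ell$ are pairwise coprime, so that the relevant componentwise $\Hom$-complex is exact away from a single internal degree, just as in the loop case. For part (iii), one checks that the componentwise $\Hom$-complex computing $\Hom^\bullet(\obj{y}{j},\obj{w_r}{})$ is exact because multiplication by the appropriate monomial is injective on the relevant $R$-module; the reverse vanishing $\Hom^\bullet(\obj{w_r}{},\obj{y}{j})=0$ follows either the same way or from Serre duality \eqref{SerreDuality}.

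I do not expect a genuine obstacle here: every computation is a transcription of the loop-case arguments with $\frac{p-1}{\ell}$ replaced by $\frac p\ell$, $xy$ replaced by $y$, and $\vec c=q\y$ in place of the loop value. The only step requiring care rather than verbatim copying is the bookkeeping with the torsion summand of $L\simeq\Z\oplus\Z/(d/\ell)$: one must check that the internal shifts by $\x$, $\y$, $\frac p\ell\x$, $\frac{q-1}{\ell}\y$ and $\vec c$ never line up so as to produce a degree-zero morphism outside the stated ranges. This is a finite, mechanical verification entirely analogous to the one in the loop case.
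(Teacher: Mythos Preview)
Your proposal is correct and follows precisely the route the paper takes: the paper gives no proof at all for this lemma (note the terminal \qed), merely asserting that ``the arguments carry over from the maximally graded and loop cases with minimal alteration.'' Your outline makes this explicit, and the substitutions you identify ($\frac{p-1}{\ell}\mapsto\frac{p}{\ell}$, $xw\mapsto w$, and the corresponding relation in $L$) are exactly the ones needed; in particular your observation that on $R/(y)\cong\C[x]$ the differential $w$ acts as the monomial $x^p$ so that the componentwise complex alternates between zero and multiplication by $x^p$ is the correct chain-level picture.
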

\subsection{Morphisms between the $\obj{w}{}$'s and $\obj{0}{}$'s}
\label{ChainBMorphisms4}
The fact that $\Hom^\bullet(\obj{w_r}{}, \obj{0}{i,j})=0$ is routine. In the other direction, we argue as in the loop case. Namely, we observe that 
\begin{align}\label{ChainARDuality}
\dim_\C \Hom^\bullet(\obj{w_r}{},\obj{0}{i,j}(\vec{c}-\x-\y))=
\begin{cases}
1\qquad\text{if }\bullet=-3\\
0\qquad\text{otherwise},
\end{cases}
\end{align}
and then find a  non-trivial element of $\Hom^3(\obj{0}{i,j}, \obj{w_r}{})$. This results in the following lemma, whose proof is easily adapted from that of \cref{KijKwrHoms}.
\begin{lem}\label{ChainKijKwrHoms}
	For each $r=1,\dots,\ell$, there is a single morphism between $\obj{0}{i,j}$ and $\obj{w_r}{}$ given by 
	\begin{align*}
	\Hom^{3}(\obj{0}{i,j},\obj{w_r}{})=\C\cdot\begin{pmatrix}
	y^{q-1-j}\\
	e^{-\frac{\pi i}{\ell}}\eta^{-r}\\
	(e^{-\frac{\pi i}{\ell}}\eta^{-r})^2\\
	\vdots\\
	(e^{-\frac{\pi i}{\ell}}\eta^{-r})^{k}\\
	(e^{-\frac{\pi i}{\ell}}\eta^{-r})^{k+1}x^{(k+1)\frac{p}{\ell}-i}
	\end{pmatrix}.
	\end{align*}\hfill\qed
\end{lem}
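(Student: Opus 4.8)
The plan is to follow the proof of \cref{KijKwrHoms} essentially verbatim, systematically replacing $p-1$ by $p$ and $\frac{p-1}{\ell}$ by $\frac{p}{\ell}$. The first step is to pin down the cohomological degree. Combining \eqref{ChainARDuality} with Serre duality \eqref{SerreDuality}, applied with Krull dimension $n=1$ and Gorenstein parameter $\alpha=\x+\y-\vec{c}$ so that $M(-\alpha)[n-1]=M(\vec{c}-\x-\y)$, shows that $\Hom^\bullet(\obj{0}{i,j},\obj{w_r}{})$ is one-dimensional and concentrated in cohomological degree $3$. Hence it suffices to exhibit one non-zero degree-$3$ cocycle and verify it is not a coboundary.

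Second, I would use Buchweitz's formula (legitimate since $R$ is Gorenstein, exactly as in \cref{KijKwrHoms}) to present the complex computing $\Hom^\bullet(\obj{0}{i,j},\obj{w_r}{})$: its terms are free $R/(w_r)$-modules and, in the relevant range, its differentials act by multiplication by $\diff_0^T$ and by $\diff_1^T=\mathrm{Adj}(\diff_0)^T$, with $\diff_0$ the matrix \eqref{ChainEvenDifferential}. Writing $\beta:=e^{-\pi\sqrt{-1}/\ell}\eta^{-r}$, so that $\beta^\ell=-1$ and $y^{\frac{q-1}{\ell}}-\beta x^{\frac{p}{\ell}}=-\beta w_r$, the candidate element of the statement is $v=\big(y^{q-1-j},\beta,\beta^2,\dots,\beta^k,\beta^{k+1}x^{(k+1)\frac{p}{\ell}-i}\big)^{T}$. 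A direct computation shows each of the first $k+1$ components of $\diff_0^Tv$ is a multiple of $y^{\frac{q-1}{\ell}}-\beta x^{\frac{p}{\ell}}$, hence vanishes in $R/(w_r)$, while the last component factors as
\begin{align*}
x^{(k+1)\frac{p}{\ell}-i}\,y^{q-j}\Big((x^{\frac{p}{\ell}})^{\ell-k-1}+\beta^{k+1}(y^{\frac{q-1}{\ell}})^{\ell-k-1}\Big),
\end{align*}
and the bracketed factor is a product of $\ell-k-1$ linear forms in $x^{\frac{p}{\ell}}$ and $y^{\frac{q-1}{\ell}}$, exactly one of which is a unit multiple of $w_r$ — this is the point where the precise value of $\beta$ and the identity $\beta^\ell=-1$ enter. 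Thus $v$ is a cocycle. That $v$ is not a coboundary is clear: its middle components are non-zero constants, whereas every entry of $\mathrm{Adj}(\diff_0)^T$ that could contribute to them carries strictly positive internal degree (these are the grading shifts displayed in the diagram for $\obj{0}{i,j}$). By the one-dimensionality established above, $v$ therefore spans $\Hom^3(\obj{0}{i,j},\obj{w_r}{})$.

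The one place requiring genuine care, and the most likely source of indexing slips, is the exponent bookkeeping in the last component: using $k=\lfloor\frac{(i-1)\ell}{p}\rfloor$ together with $\ell\mid p$ and $\ell\mid q-1$, one checks that $(k+1)\frac{p}{\ell}-i\geq 0$, $i-k\frac{p}{\ell}\geq 1$, $0\leq\ell-k-1$, and $j-(\ell-1)\frac{q-1}{\ell}\geq 1$, so that $v$ and all the matrix entries involved are honest polynomials, and one confirms that the displayed factorisation really does contain $w_r$ among its $\ell-k-1$ linear factors (the exponent identity $p-i=(k+1)\tfrac{p}{\ell}-i+(\ell-k-1)\tfrac{p}{\ell}$ is what makes the factoring work). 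Everything else is a line-by-line transcription of the loop-case argument.
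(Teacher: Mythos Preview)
Your proposal is correct and follows exactly the approach the paper intends: the paper's own proof is simply the sentence ``whose proof is easily adapted from that of \cref{KijKwrHoms}'', and you have carried out precisely that adaptation, with the same three steps (Serre duality to pin down the rank and degree, direct verification that $v\in\ker\diff_0^T$ via the factorisation of the last entry, and the degree argument to exclude $v\in\operatorname{im}\mathrm{Adj}(\diff_0)^T$). Your explicit factorisation of the last component and the check that $\beta^\ell=-1$ forces $w_r$ to appear among the $\ell-k-1$ linear factors are exactly what the paper alludes to in the loop case, so nothing is missing.
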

Computing morphisms $\obj{0}{i,j}\rightarrow\obj{0}{I,J}$ is analogous to the maximally graded case, and follows analogous arguments to that of \cref{BMorphisms5}. Namely, morphisms $\obj{0}{i,j}\rightarrow\obj{0}{I,J}$ are spanned by the module
\begin{align*}
\big(R/I_{I,J}\big)_{(I-i)\x+(J-j)\y}\ 
\end{align*} 
in degree zero. From this, it is immediate that there are no morphisms unless $I\geq i$. Analogously to the loop case, it is now possible to have $j>J$ since $\frac{p}{\ell}\x=\frac{q-1}{\ell}\y$.  Putting this together, we conclude:
\begin{lem}
	For all $i\in\{1, \dots, p-1\}$ and $j\in\{q-\frac{q-1}{\ell},\dots,q-1\}$, we have 
	
	\begin{flalign*}
	&&\Hom^{\bullet}(\obj{0}{i,j}, \obj{0}{I,J})\simeq &\begin{cases}
	\text{span}_\C\{x^{I-i}y^{J-j}, \dots, x^{(I-i)\bmod\frac{p}{\ell}}y^{J-j+k\frac{q-1}{\ell}}\}& \text{if } I\geq i,\ J\geq j\\
	& \text{and } \bullet=0\\
	\text{span}_\C\{x^{I-i-\frac{p}{\ell}}y^{J-j+\frac{q-1}{\ell}},\dots, x^{(I-i)\bmod\frac{p}{\ell}}y^{J-j+k\frac{q-1}{\ell}}\}& \text{if }\ J<j,\ I\geq i+\frac{p}{\ell}\\
	& \text{and }\bullet=0\\
	0&\text{otherwise.}\hfill\qed
	\end{cases} &
	\end{flalign*}
	
\end{lem}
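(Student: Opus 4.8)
The statement is the computation of $\Hom^\bullet(\obj{0}{i,j},\obj{0}{I,J})$, and the plan is to follow the corresponding computation in the loop case (\cref{BMorphisms6}), which itself mirrors the maximally graded case of \cite{HabermannSmith}; only the underlying commutative algebra changes. By construction $\obj{0}{i,j}$ is the stabilisation of the $L$-graded $R$-module $R(i\x+(j+1)\y)/I_{i,j}$, whose $R$-free resolution is the one written down around \eqref{ChainEvenDifferential}. Feeding this resolution (tensored down to $R$, extended two-periodically, and truncated using $[2]\simeq(\vec c)$) into Buchweitz's formula $\Hom^\bullet_{\mathrm{HMF}(\C^2,\Gamma,\w)}(K,K')\simeq \HH^\bullet(\Hom_{\gr-R}(K\otimes_S R,M'))$ — legitimate since $R$ is Gorenstein, \cite[Prop.~2.23]{Kravets} — one finds, exactly as in loc.\ cit., that the resulting complex is exact away from cohomological degree $0$ and that in degree $0$ its cohomology is the graded piece $\big(R/I_{I,J}\big)_{(I-i)\x+(J-j)\y}$. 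This is precisely the reduction already stated just before the Lemma, so the remaining task is to make this graded piece explicit.

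To do so I would count monomials. A $\C$-basis of $\big(R/I_{I,J}\big)_{(I-i)\x+(J-j)\y}$ is given by the normal-form monomials $x^a y^b$ of that $L$-degree not lying in $I_{I,J}$, and the key new feature, absent in the maximally graded setting where each $L$-degree carries at most one monomial, is that by the defining relation $\tfrac{p}{\ell}\x=\tfrac{q-1}{\ell}\y$ two monomials share an $L$-degree exactly when their exponent vectors differ by an integer multiple of $(\tfrac{p}{\ell},-\tfrac{q-1}{\ell})$. Thus the monomials of a given degree form a chain — stepping from one to the next lowers the $x$-exponent by $\tfrac{p}{\ell}$ and raises the $y$-exponent by $\tfrac{q-1}{\ell}$ — and the monomials surviving in $R/I_{I,J}$ form a sub-interval of it. Using $1\le i\le p-1$, $I\le p-1$ and $q-\tfrac{q-1}{\ell}\le j\le q-1$ one pins down its endpoints: the top is $x^{I-i}y^{J-j}$ when $J\ge j$ (which needs $I\ge i$ so as to avoid the generator $x^I\in I_{I,J}$), and is its lower neighbour $x^{I-i-p/\ell}y^{J-j+(q-1)/\ell}$ when $J<j$ (which forces $I\ge i+\tfrac{p}{\ell}$); in the remaining cases no monomial of the right degree lies in $R/I_{I,J}$ and $\Hom^\bullet$ vanishes. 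The bottom endpoint is found by stepping down until the first obstruction occurs — the $x$-exponent becoming negative, the $y$-exponent reaching $q$ (in which case the relation $\w=x^py+y^q$ rewrites the monomial as one with $x$-exponent $>I-1$, hence inside $(x^I)$, hence zero), or the monomial falling into one of the mixed or pure-power generators of $I_{I,J}$ — and one checks this is always the displayed monomial $x^{(I-i)\bmod (p/\ell)}y^{J-j+k(q-1)/\ell}$.

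The main obstacle is this last check: verifying, uniformly across the ranges of $i,j,I,J$, that among the competing constraints coming from the $k+2$ monomial generators of $I_{I,J}$ (whose exponents are governed by $\lfloor(I-1)\ell/p\rfloor$), from $\w$, and from positivity of exponents, the binding one always cuts the chain off exactly at the stated lower endpoint. None of this is conceptually hard — it is bookkeeping of the same kind already carried out in \cite{HabermannSmith} and in \cref{BMorphisms6} — but it is where all the work sits, and the $J<j$ branch, which has no counterpart in the maximally graded picture, has to be threaded through it with care.
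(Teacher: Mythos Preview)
Your proposal is correct and follows essentially the same route as the paper: reduce via Buchweitz's formula to the graded piece $\big(R/I_{I,J}\big)_{(I-i)\x+(J-j)\y}$, then enumerate the monomials of that $L$-degree using the relation $\tfrac{p}{\ell}\x=\tfrac{q-1}{\ell}\y$. The paper's own proof is just the short paragraph preceding the lemma (which ends with a \qed), so your write-up is in fact a more detailed expansion of the same argument rather than a departure from it.
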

\subsection{The total endomorphism algebra of the basic objects.}
As in the loop case, we are able to describe the endomorphism algebra of the category $\mathcal{B}$ explicitly as a quiver-with-relations. The proof follows from that of the maximally graded case with the analogous alterations required in \cref{LoopEndAlgebra}.
\begin{thm}
	\label{ChainEndAlgebra}
	The cohomology-level total endomorphism algebra of the objects of $\mathcal{B}$ is the algebra of the quiver-with-relations described in Figure \ref{figChainQuiver}, with all arrows living in degree zero.  In particular, $\mathcal{B}$ is a $\Z$-graded $A_\infty$-category concentrated in degree $0$, so is intrinsically formal.

	\begin{figure}[H]
	\centering
	\begin{tikzpicture}[blob/.style={circle, draw=black, fill=black, inner sep=0, minimum size=\blobsize}, arrow/.style={->, shorten >=6pt, shorten <=6pt}, scale = 0.9]
	\def\blobsize{1.2mm}

	\draw (3.5,0) node[blob]{};
	\draw (2.5,0) node[]{$\cdots$};
	\draw (1.5,0) node[blob]{};
	
	\draw (3.5,1) node[blob]{};
	\draw (2.5,1) node[]{$\cdots$};
	\draw (1.5,1) node[blob]{};
	
	\draw (3.5,3.5) node[blob]{};
	\draw (2.5,3.5) node[]{$\cdots$};
	\draw (1.5,3.5) node[blob]{};

	\draw[arrow] (1.5,0) -- (1.5,1);	
	\draw[arrow] (3.5,0) -- (3.5,1);	
	
	\draw[arrow] (1.5,1) -- (1.5,2);	
	\draw[arrow] (3.5,1) -- (3.5,2);

	\draw[arrow] (1.5,2.5) -- (1.5,3.5);	
	\draw[arrow] (3.5,2.5) -- (3.5,3.5);	
	
	\draw[arrow] (1.5,0) -- (2.4,0);
	\draw[arrow] (2.6,0) -- (3.5,0);
	
	\draw[arrow] (1.5,1) -- (2.4,1);
	\draw[arrow] (2.6,1) -- (3.5,1);
	
	\draw[arrow] (1.5,3.5) -- (2.4,3.5);
	\draw[arrow] (2.6,3.5) -- (3.5,3.5);
	
	\draw (1.5,2.4) node[]{$\vdots$};
	\draw (3.5,2.4) node[]{$\vdots$};
	\draw (2.5,2.4) node[]{$\iddots$};	
	
		\draw (5,0) node[blob]{};
	\draw (5,1) node[blob]{};
	\draw (5,3.5) node[blob]{};
	
		\draw[arrow] (3.5, 0) -- (5, 0) node[midway,below]{$a$};
	\draw[arrow] (3.5, 1) -- (5, 1) node[midway,below]{$a$};
	\draw[arrow] (3.5, 3.5) -- (5, 3.5) node[midway,below]{$a$};	
		\draw (5,-1) node{$\obj{y}{j}[3]$};
	\draw (6,5) node{$\obj{w_r}{}[3]$};
\draw (5, 2.37) node{$\vdots$};
	
	\draw [
	thick,
	decoration={
		brace,
		mirror,
		raise=0.5cm
	},
	decorate
	] (1.4,-.25) -- (3.6,-.25)
	node [pos=0.5,anchor=north,yshift=-0.55cm] {$\frac{p}{\ell}-1$};
	
			\draw [
thick,
decoration={
	brace,
	raise=0.5cm
},
decorate
] (5.25,3.6) -- (5.25,-.1)
node [pos=0.5,anchor=west,xshift=0.55cm] {$\frac{q-1}{\ell}$};
	
	\draw[line width=0.5mm, opacity=0.2] (1, -0.5) rectangle (4, 4);

		\begin{scope}[yshift=3.5cm]
	
	\draw[arrow] (3.5,0) -- ({3.5+2*cos(360/14)},{2*sin(360/14)} ) node[pos=0.6,above]{$c_\ell$};
	\draw[arrow] (3.5,0) -- ({3.5+2*cos(5*360/28)},{2*sin(5*360/28)} ) node[pos=0.6,right]{$c_1$};
	
	\end{scope}

	\begin{scope}[yshift=3.5cm, xshift=3.5cm]
	\foreach \a in {2,5}{
		\draw (\a*360/28: 2) node[blob]{};
	}
	\foreach \a in {1,2,3}{
		\draw[fill] (\a*135/14+360/14: 2) circle[radius=1pt];
	}
	
	\end{scope}

	\draw[line width=0.5mm, opacity=0.2] (4.5, -0.5) rectangle (5.5, 4);

	\begin{scope}[xshift=-3cm]
	\draw (3.5,0) node[blob]{};
	\draw (1.5,0) node[]{$\cdots$};
	\draw (2.5,0) node[blob]{};
	\draw (0.5,0) node[blob]{};
	
		\draw (3.5,1) node[blob]{};
	\draw (1.5,1) node[]{$\cdots$};
	\draw (2.5,1) node[blob]{};
	\draw (0.5,1) node[blob]{};
	
		\draw (3.5,3.5) node[blob]{};
	\draw (1.5,3.5) node[]{$\cdots$};
	\draw (2.5,3.5) node[blob]{};
	\draw (0.5,3.5) node[blob]{};

	\draw[arrow] (-0.4,0) -- (0.5,0);
	\draw[arrow] (-0.4,1) -- (0.5,1);
	\draw[arrow] (-0.4,3.5) -- (0.5,3.5);
	
	\draw[arrow] (0.5,0) -- (0.5,1);
	\draw[arrow] (2.5,0) -- (2.5,1);	
	\draw[arrow] (3.5,0) -- (3.5,1);

	\draw[arrow] (0.5,1) -- (0.5,2);
	\draw[arrow] (2.5,1) -- (2.5,2);	
	\draw[arrow] (3.5,1) -- (3.5,2);

	\draw[arrow] (0.5,2.5) -- (0.5,3.5);
	\draw[arrow] (2.5,2.5) -- (2.5,3.5);	
	\draw[arrow] (3.5,2.5) -- (3.5,3.5);	
	
	\draw[arrow] (0.5,0) -- (1.4,0);
	\draw[arrow] (1.6,0) -- (2.5,0);
	\draw[arrow] (2.5,0) -- (3.5,0);
	
	\draw[arrow] (0.5,1) -- (1.4,1);
	\draw[arrow] (1.6,1) -- (2.5,1);
	\draw[arrow] (2.5,1) -- (3.5,1);

	\draw[arrow] (0.5,3.5) -- (1.4,3.5);
	\draw[arrow] (1.6,3.5) -- (2.5,3.5);
	\draw[arrow] (2.5,3.5) -- (3.5,3.5);
	
	\draw[arrow] (3.5,0) -- (4.5,0);
\draw[arrow] (3.5,1) -- (4.5,1);
\draw[arrow] (3.5, 3.5) -- (4.5,3.5);
	
	\draw (0.5,2.4) node[]{$\vdots$};
	\draw (3.5,2.4) node[]{$\vdots$};
	\draw (2.5,2.4) node[]{$\vdots$};
	\draw (1.5,2.4) node[]{$\iddots$};	
	
	\draw[arrow] (0.5,3.5) -- (4.5,0);
	\draw[arrow] (1.5,3.5) -- (5.5,0);
	\draw[arrow] (2.5,3.5) -- (6.5,0);

	\draw[line width=0.5mm, opacity=0.2] (0, -0.5) rectangle (4, 4);

\draw [
thick,
decoration={
	brace,
	mirror,
	raise=0.5cm
},
decorate
] (0.4,-.25) -- (3.6,-.25)
node [pos=0.5,anchor=north,yshift=-0.55cm] {$\frac{p}{\ell}$};

		\begin{scope}[yshift=-2.5cm,xshift=-2cm]
	\draw (0,-.15) node{\parbox{300pt}{\small \ \textbf{Morphisms:} \begin{enumerate}[(i)]\item Horizontal arrows are\\ labelled by $x$, \item Vertical and diagonal arrows\\ are labelled by $y$. \end{enumerate}}};
	\end{scope}
	\end{scope}
	\begin{scope}[xshift=-7cm]
	\draw (0.5,0) node[]{$\cdots$};
	\draw (2,0) node[]{$\cdots$};
	\draw (3.5,0) node[]{$\cdots$};
	
	\draw (0.5,1) node[]{$\cdots$};
	\draw (2,1) node[]{$\cdots$};
	\draw (3.5,1) node[]{$\cdots$};
	
	\draw (0.5,2.4) node[]{$\vdots$};
	\draw (2,2.4) node[]{$\iddots$};
	\draw (3.5,2.4) node[]{$\vdots$};
	
	\draw (0.5,3.5) node[]{$\cdots$};
	\draw (2,3.5) node[]{$\cdots$};
	\draw (3.5,3.5) node[]{$\cdots$};
	
	\draw[arrow] (0.5,3.5) -- (4.5,0);
	\draw[arrow] (1.5,3.5) -- (5.5,0);
	\draw[arrow] (2.5,3.5) -- (6.5,0);
	\draw[arrow] (3.5,3.5) -- (7.5,0);
	
		\draw[line width=0.5mm, opacity=0.2] (0, -0.5) rectangle (4, 4);
	
	\end{scope}
	
	\begin{scope}[xshift=-11cm]
	\draw (3.5,0) node[blob]{};
	\draw (1.5,0) node[]{$\cdots$};
	\draw (2.5,0) node[blob]{};
	\draw (0.5,0) node[blob]{};
	
	\draw (3.5,1) node[blob]{};
	\draw (1.5,1) node[]{$\cdots$};
	\draw (2.5,1) node[blob]{};
	\draw (0.5,1) node[blob]{};
	
	\draw (3.5,3.5) node[blob]{};
	\draw (1.5,3.5) node[]{$\cdots$};
	\draw (2.5,3.5) node[blob]{};
	\draw (0.5,3.5) node[blob]{};

	\draw[arrow] (3.5,0) -- (4.4,0);
	\draw[arrow] (3.5,1) -- (4.4,1);
	\draw[arrow] (3.5,3.5) -- (4.4,3.5);
	
	\draw[arrow] (0.5,0) -- (0.5,1);
	\draw[arrow] (2.5,0) -- (2.5,1);	
	\draw[arrow] (3.5,0) -- (3.5,1);

	\draw[arrow] (0.5,1) -- (0.5,2);
	\draw[arrow] (2.5,1) -- (2.5,2);	
	\draw[arrow] (3.5,1) -- (3.5,2);

	\draw[arrow] (0.5,2.5) -- (0.5,3.5);
	\draw[arrow] (2.5,2.5) -- (2.5,3.5);	
	\draw[arrow] (3.5,2.5) -- (3.5,3.5);	
	
	\draw[arrow] (0.5,0) -- (1.4,0);
	\draw[arrow] (1.6,0) -- (2.5,0);
	\draw[arrow] (2.5,0) -- (3.5,0);
	
	\draw[arrow] (0.5,1) -- (1.4,1);
	\draw[arrow] (1.6,1) -- (2.5,1);
	\draw[arrow] (2.5,1) -- (3.5,1);
	
	\draw[arrow] (0.5,3.5) -- (1.4,3.5);
	\draw[arrow] (1.6,3.5) -- (2.5,3.5);
	\draw[arrow] (2.5,3.5) -- (3.5,3.5);
	
	\draw (0.5,2.4) node[]{$\vdots$};
	\draw (2.5,2.4) node[]{$\vdots$};
	\draw (3.5,2.4) node[]{$\vdots$};
	\draw (1.5,2.4) node[]{$\iddots$};	
	
	\draw[arrow] (0.5,3.5) -- (4.5,0);
	\draw[arrow] (1.5,3.5) -- (5.5,0);
	\draw[arrow] (2.5,3.5) -- (6.5,0);
	\draw[arrow] (3.5,3.5) -- (7.5,0);

	\draw[line width=0.5mm, opacity=0.2] (0, -0.5) rectangle (4, 4);
		\draw (2,4.5) node{$\obj{0}{i,j}$};
	
	\end{scope}

	\begin{scope}[yshift=-2.5cm,xshift=4cm]
	\draw (0,0) node{\parbox{300pt}{\small \ \textbf{Relations:} \begin{enumerate}[(i)]\item $xy=yx,$\item $ay=0$, \item $c_r(x^{\frac{p}{\ell}}-e^{\frac{\pi i}{\ell}}\eta^ry^{\frac{q-1}{\ell}})=0$.\end{enumerate}}};
	\end{scope}
	\end{tikzpicture}
	\caption{The quiver describing the category $\mathcal{B}$ for chain polynomials. There are $\ell-1$ blocks of size $\frac{p(q-1)}{\ell^2}$ and one of size $(\frac{p}{\ell}-1)\frac{q-1}{\ell}$. \label{figChainQuiver}}
\end{figure}	

\end{thm}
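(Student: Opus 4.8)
The plan is to run the chain analogue of the proof of \cref{LoopEndAlgebra}; this will in fact be a \emph{simplification} of that argument, in the same way that the maximally graded chain case simplifies the maximally graded loop case. First I would observe that the morphism computations of the preceding subsections — the orthogonality of the $\obj{y}{j}$'s and of the $\obj{w_r}{}$'s, \cref{ChainKijKwrHoms} together with the vanishing of $\Hom^\bullet(\obj{w_r}{},\obj{0}{i,j})$, the computation of $\Hom^\bullet(\obj{0}{i,j},\obj{0}{I,J})$, and the vanishing of the morphisms from the $\obj{y}{j}$'s to the $\obj{0}{}$'s — together determine every $\Hom$-space between the objects of $\mathcal{B}$, and that with the prescribed shifts all of these spaces are concentrated in cohomological degree $0$. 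From this the formality claim is immediate: a $\Z$-graded $A_\infty$-category concentrated in a single degree can carry neither a differential nor any nonzero higher product $\mu^n$ with $n\geq 3$ (their targets sit in negative degree), so it is an ordinary algebra, hence intrinsically formal. Summing the ranks just listed, the total morphism space has exactly the dimension of the path algebra of the quiver of Figure \ref{figChainQuiver} modulo the relations (i)--(iii), so all that remains is to verify that the distinguished generators compose according to those relations.

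Next I would dispatch relations (i) and (ii): the commutativity $xy=yx$ among the $\obj{0}{}$'s and the vanishing $ay=0$ of the composite into the $\obj{y}{j}[3]$'s are inherited verbatim from the maximally graded (and loop) computations of \cite{HabermannSmith}, since the relevant chain maps are the evident multiplication-by-monomial maps and the argument carries over after replacing $\tfrac{p-1}{\ell}$ by $\tfrac{p}{\ell}$ throughout. The one genuinely new relation is (iii), namely $c_r\big(x^{p/\ell}-e^{\pi i/\ell}\eta^r y^{(q-1)/\ell}\big)=0$, the chain counterpart of the third relation of the loop quiver. As in the loop case, the symmetry of the picture reduces this to a single representative instance: one fixes a pair of $\obj{0}{}$'s so that $x^{p/\ell}$ and $y^{(q-1)/\ell}$ — which carry the same $L$-degree, since $\tfrac{p}{\ell}\x=\tfrac{q-1}{\ell}\y$ — both lie in $\Hom^0$ between them, realises each of these in odd cohomological degree by the obvious inclusion-type matrix (an identity block with a row of zeros appended on one side or the other, exactly as in the proof of \cref{LoopEndAlgebra}), and composes with the column vector representing $c_r$ from \cref{ChainKijKwrHoms}. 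Because the middle entries of that vector form a geometric progression of ratio $e^{-\pi i/\ell}\eta^{-r}$, bracketed by the monomials $y^{q-1-j}$ and $(e^{-\pi i/\ell}\eta^{-r})^{k+1}x^{(k+1)p/\ell-i}$ at the two ends, the two contributions of $x^{p/\ell}$ and $e^{\pi i/\ell}\eta^r y^{(q-1)/\ell}$ to $c_r\cdot w_r$ cancel term by term, which is the desired relation.

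The main obstacle, exactly as in the loop case, is bookkeeping rather than conceptual: since $k=\lfloor (i-1)\ell/p\rfloor$ jumps as $i$ increases, the ranks of the matrix factorisations $\obj{0}{i,j}$ — and hence the sizes of the matrices representing the generating morphisms — vary, so I would need to be careful that the matrices being composed have compatible shapes and that this does not introduce any relations beyond (i)--(iii). This is routine but must be checked; everything else is a direct transcription (in fact a simplification) of the loop argument, and the dimension count from the first step guarantees that once the three displayed relations are verified the quiver description is complete, yielding the identification of the cohomology-level endomorphism algebra of $\mathcal{B}$ with the algebra of Figure \ref{figChainQuiver} and, with it, the intrinsic formality of $\mathcal{B}$.
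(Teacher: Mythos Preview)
Your proposal is correct and follows essentially the same approach as the paper. The paper's own proof is in fact a single sentence deferring to the maximally graded chain case of \cite{HabermannSmith} together with the modifications already carried out for \cref{LoopEndAlgebra}; your write-up simply unpacks what those modifications are, including the explicit verification of relation (iii) via the inclusion-type matrices and the column vector of \cref{ChainKijKwrHoms}, exactly as in the loop proof.
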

\subsection{Generation}
The last step in showing that the collection of objects in $\mathcal{B}$ generates $\mathrm{mf}(\C^2,\Gamma,\w)$. 
\begin{prop}
	\label{ChainBGenerates}
	The functor
	\begin{align*}
	\Tw \mathcal{B} \rightarrow \mathrm{mf}(\C^2, \Gamma, \w)
	\end{align*}
	is a quasi-equivalence.
\end{prop}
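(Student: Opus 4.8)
The plan is to transcribe the proof of \cref{BGenerates} in the loop case. By \cref{PVGeneration}, the category $\mathrm{HMF}(\C^2,\Gamma,\w)$ is split-generated by the $L$-grading shifts of the stabilisation of $R/(x,y)$, so it suffices to show that $R(l)/(x,y)$ lies in the triangulated subcategory $\langle V\rangle$ generated by the objects of $\mathcal{B}$ for every $l\in L$; and since $[2]\simeq(\vec{c})$ in $\mathrm{HMF}(\C^2,\Gamma,\w)$, it is enough to do this for a set of representatives of $L/\Z\vec{c}$. Once this is in hand, the intrinsic formality of $\mathcal{B}$ established in \cref{ChainEndAlgebra} upgrades the resulting split-generation statement to a quasi-equivalence $\Tw^\pi\mathcal{B}\xrightarrow{\sim}\mathrm{mf}(\C^2,\Gamma,\w)$, and, since $\mathcal{B}$ is a full exceptional collection, $\Tw\mathcal{B}$ is already idempotent complete (\cite[Remark 5.14]{SeidelBook}), so $\Tw^\pi\mathcal{B}=\Tw\mathcal{B}$ and the proposition follows.

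The substance of the argument is therefore the construction of the modules $R(l)/(x,y)$ as iterated mapping cones on the objects of $\mathcal{B}$. The majority of these arise from the same inductive procedure as in \cite[Proposition 2.14]{HabermannSmith} and \cref{BGenerates}: beginning with the $R$-modules $R(i\x+j\y)/I_{i,j}$ underlying the $\obj{0}{i,j}$, one repeatedly takes cones of multiplication-by-$x$ and multiplication-by-$y$ maps between such modules, together with the $\obj{y}{j}$, to cut the defining ideals down to $(x,y)$, producing the modules $R(a\x+b\y)/(x,y)$ over the relevant range of $(a,b)$ as well as, along the way, the intermediate quotients $R(a\x)/(x)$ and $R(b\y)/(y)$. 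This part carries over essentially verbatim, the only differences being bookkeeping ones reflecting the relation $\tfrac{p}{\ell}\x=\tfrac{q-1}{\ell}\y$ and the absence of the $\obj{x}{i}$ building blocks.

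The main obstacle — exactly as in the loop case — is the handful of modules for which this naive induction breaks down, namely the chain analogues of the exceptional modules $R((p-\tfrac{p-1}{\ell})\x+\y)/(x,y)$, $R(\vec c)/(x,y)$ and $R/(x,y)$ appearing in \cref{BGenerates}. These are dealt with by first placing the twists $\obj{w_r}{}(l\x)$ of the Koszul-type factorisations of the individual factors $w_r$ of $\w$ inside $\langle V\rangle$: one writes $R(\x)/(w_r)$ as the cone of $w_r\colon R((1-\tfrac{p}{\ell})\x)/(x)\to R(\x)/(xw_r)$, whose source was produced in the previous step and whose target is an extension of $R(\x)/(x)$ by $R/(w_r)$, and then induces on the twist parameter. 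With the $\obj{w_r}{}(l\x)$ available, one peels the factors off $\w=yw_1\cdots w_\ell$ one at a time by taking cones of the maps given by $yw_{r+1}\cdots w_\ell$, arriving at a module of the form $R(l)/(y)$, and finally realises $R(l)/(x,y)$ as the cone of the map $(x\ \ y)$ from $R(\cdot)/(y)\oplus R(\cdot)/(x)$ to $R(l)/(xy)$. The chain case is in fact strictly easier here than the loop case, since $\w$ contains the single linear factor $y$ rather than the two linear factors $x$ and $y$, so fewer exceptional modules occur and the cone towers are correspondingly shorter; modulo this simplification the argument of \cref{BGenerates} applies \emph{mutatis mutandis}.
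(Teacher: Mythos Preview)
Your overall strategy is correct and matches the paper's: reduce to \cref{PVGeneration}, build all grading shifts of $R/(x,y)$ inside $\langle V\rangle$, then use formality and idempotent completeness of $\Tw\mathcal{B}$. The bulk induction from the $\obj{0}{i,j}$ and $\obj{y}{j}$ also carries over as you say.

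The gap is in your treatment of the exceptional modules, where you have transcribed the loop-case cone towers too literally. In the loop case $\w=xyw_1\cdots w_\ell$, so both $x$ and $y$ are factors; the modules $R(\cdot)/(x)$ are the rank-one MCM modules $\obj{x}{i}$, and $R(\cdot)/(xw_r)$ makes sense because $xw_r\mid\w$. In the chain case $\w=yw_1\cdots w_\ell$ and $x\nmid\w$: the module $R/(x)=\C[y]/(y^q)$ is Artinian rather than MCM, there is no $\obj{x}{}$ in $\mathcal{B}$, and your claim that the $R(a\x)/(x)$ appear ``along the way'' in the basic induction is unjustified. Consequently the cone
\[
R\big((1-\tfrac{p}{\ell})\x\big)/(x)\xrightarrow{\ w_r\ }R(\x)/(xw_r)
\]
you write down has neither source nor target available at that stage of the argument. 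The paper's proof makes exactly the adaptation you are missing: it runs the entire construction on the $y$-side, building $R(l\y)/(w_r)$ as the cone of
\[
R\big((l-\tfrac{q-1}{\ell})\y\big)/(y)\xrightarrow{\ w_r\ }R(l\y)/(yw_r),
\]
where the source comes from the $\obj{y}{j}$ together with $R(\cdot)/(y)$'s built from the already-constructed $R(l\y)/(x,y)$, and the target is legitimate because $yw_r\mid\w$. Once you swap $x$ for $y$ throughout your third paragraph, the argument goes through and is indeed a simplification of the loop case.
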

\begin{proof}
The strategy is the same as \cref{BGenerates}, so we will be brief, only explaining the points which differ from the loop and maximally graded chain cases. As before, we must prove that $R(l)/(x,y)\in\langle V\rangle $ for each $l\in L/\vec{c}\Z\simeq\Z/\left(\frac{pq}{\ell}\right)$, and we begin by  observing that $R(a\x+b\y)/(x,y)\in\langle V\rangle $ for $a=1,\dots, p-1$, $b=1,\dots,q+1-\frac{q-1}{\ell}$ by the same argument as in the loop and maximally graded cases. Furthermore, we see that, up to grading shifts by $[-2]$, this includes $R(l\y)/(x,y)$ for $l=1,\dots, (\ell-1)\frac{q-1}{\ell}$, as well as $R((a-\frac{p}{\ell})\x+\y)/(x,y)$ for $a=1,\dots,p-1$. From the $R(l\y)/(x,y)$, we build $R(l\y)/(y)$ for $l=1,\dots, (\ell-1)\frac{q-1}{\ell}$ analogously to the maximally graded case. Namely, the cone of $R((j+1)\y-\vec{c})/(y)\xrightarrow{x^{\frac{p}{\ell}}} R(j\vec{y})/(y)$ is $R(j\y)/(x^{\frac{p}{\ell}},y)$, and can be built by iterative cones from the previously constructed stabilisations of the origin. Together with the grading shifts of $\obj{y}{j}$, show that $R(l\y)/(y)\in\langle V\rangle$ for all $l\in\Z$ except for $l\equiv -\frac{q-1}{\ell} \bmod \vec{c}$. We build this remaining module analogously to $R((p-\frac{p-1}{\ell})\x+\y)/(xy)$ in the loop case. Namely, we can iteratively build $R((k-1)\frac{q-1}{\ell}\y)/(w_1\dots w_{k})$ for $k=2,\dots,\ell$, and observe that $R((\ell-1)\frac{q-1}{\ell}\y)/(w_1\dots w_{\ell})[1]=R(\vec{c}-\frac{q-1}{\ell}\y)/(y)$. Now that we have constructed modules $R(l\y)/(y)$ for all $l\in \Z$, we can produce $R(l\y)/(x,y)$ for any $l=1,\dots,q-1$ as in the maximally graded case. Then, all that is left to do is construct $R/(x,y)$ and $R(a\vec{x}+\vec{y})$ for $a=p-\frac{p}{\ell}+1,\dots, p-1$. The latter modules are constructed similarly to the maximally graded case by observing that the cone of the morphism 
\begin{align*}
\bigoplus_{t=0}^{\ell-1} R/(w)\xrightarrow{{\begin{pmatrix}
		x^i& \dots& x^{i-(\ell-1)\frac{p}{\ell}}y^{(\ell-1)\frac{q-1}{\ell}}
		\end{pmatrix}} } R(i\x)/(w)
\end{align*}
is $\obj{0}{i,q-1}$, and so $R(i\x)/(w)[1]=R(i\x+\y)/(y)\in\langle V\rangle$. From here, the proof proceeds as in the maximally graded case.

\end{proof}

We deduce the following corollary, whose proof follows from \cref{ChainBGenerates} and \cref{ChainEndAlgebra} in the same way as in the proof of \cite[Theorem 4.9]{HabermannSmith}:
\begin{cor}[\cref{TiltingCor}, undeformed chain polynomial case] The object 
	\begin{align*}
	\mathcal{E}:=\left(\bigoplus_{\substack{i=1,\dots, p-1\\
			j=q-\frac{q-1}{\ell},\dots, q-1}}\obj{0}{i,j}\right)\oplus\left(\bigoplus_{\substack{j=q-\frac{q-1}{\ell},\dots, q-1}}\obj{y}{j}[3]\right)\oplus\left(\bigoplus_{\substack{r=1,\dots, \ell}}\obj{w_r}{}[3]\right)
	\end{align*}
	is a tilting object for $\mathrm{mf}(\C^2,\Gamma,\w)$. 
\end{cor}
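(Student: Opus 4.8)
The plan is to obtain the corollary formally from \cref{ChainBGenerates} and \cref{ChainEndAlgebra}, in exact parallel with the loop case treated above and with the maximally graded case \cite[Theorem 4.9]{HabermannSmith}. Recall that, for a triangulated category $\mathcal{T}$ with dg-enhancement, an object $T$ is a \emph{tilting object} if it classically generates $\mathcal{T}$ and $\Hom^i_{\mathcal{T}}(T,T)=0$ for all $i\neq 0$; in that case $\mathcal{T}\simeq\perf(\End_{\mathcal{T}}T)$, and if in addition $\End_{\mathcal{T}}T$ has finite global dimension one has $\mathcal{T}\simeq D^b(\mathrm{mod}\,\End_{\mathcal{T}}T)$. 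So I would verify these properties for $\mathcal{E}$.

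First I would record rigidity. By \cref{ChainEndAlgebra}, the total cohomology-level endomorphism algebra of the objects of $\mathcal{B}$ is the path algebra $A$ of the quiver-with-relations of Figure \ref{figChainQuiver}, and it is concentrated in cohomological degree $0$; since the indecomposable summands of $\mathcal{E}$ are precisely the objects of $\mathcal{B}$ — with the shifts ($[3]$ on each $\obj{y}{j}$ and on each $\obj{w_r}{}$) chosen exactly so that every morphism lies in degree $0$ — this gives $\End(\mathcal{E})\cong A$ and $\Hom^i(\mathcal{E},\mathcal{E})=0$ for all $i\neq 0$. Next I would use the intrinsic formality of $\mathcal{B}$, also part of \cref{ChainEndAlgebra}: the $A_\infty$-category $\mathcal{B}$ is quasi-equivalent to $A$ regarded as an ordinary algebra, so $\Tw\mathcal{B}\simeq\perf(A)$ with $\mathcal{E}$ corresponding to the free module $A$, a projective generator, hence a tilting object of $\perf(A)$. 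Transporting $\mathcal{E}$ across the quasi-equivalence $\Tw\mathcal{B}\xrightarrow{\sim}\mf(\C^2,\Gamma,\w)$ of \cref{ChainBGenerates} then gives the statement. Along the way I would note that the number of summands of $\mathcal{E}$ is the number of objects of $\mathcal{B}$, namely $\tfrac{p(q-1)}{\ell}+\ell$, which equals $\tfrac{\mu(\wt)-1}{\ell}+\ell$ because $\mu(\wt)=p(q-1)+1$ for the transpose chain, in agreement with \cref{TiltingCor}; and that the quiver of Figure \ref{figChainQuiver} is acyclic (its vertices carry a full exceptional collection), so $A$ is a finite-dimensional algebra of finite global dimension and $\mf(\C^2,\Gamma,\w)\simeq D^b(\mathrm{mod}\,A)$.

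The substantive work has already been done: it lies in the explicit computation of the endomorphism algebra (\cref{ChainEndAlgebra}) and in the split-generation statement (\cref{ChainBGenerates}). The only obstacle remaining for this corollary is bookkeeping — confirming that the shifts in the definition of $\mathcal{E}$ are the ones forced by \cref{ChainEndAlgebra}, and that the combinatorics of the quiver yield finite global dimension, so that ``tilting'' may be read in the strong sense of a derived equivalence with a finite-dimensional algebra.
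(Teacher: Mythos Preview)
Your proposal is correct and follows exactly the approach indicated in the paper: the paper states that the corollary ``follows from \cref{ChainBGenerates} and \cref{ChainEndAlgebra} in the same way as in the proof of \cite[Theorem 4.9]{HabermannSmith}'', and your argument spells out precisely that deduction --- rigidity from the degree-zero endomorphism algebra of \cref{ChainEndAlgebra}, generation from \cref{ChainBGenerates}, and the formal passage via $\Tw\mathcal{B}\simeq\perf(A)$.
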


\section{Chain A-model}
\label{ChainAmodel}
In this section, we characterise the Fukaya--Seidel category of $\wt=\xt^p+\xt\yt^q$ with symmetry group $\widecheck{\overline{\Gamma}}$ not necessarily trivial. The content of \cref{CrepResolutionSection,EqMorsificationSection,NonExactFSSection} applies to this section unaltered, and the subsequent subsections essentially follow from applying the same alterations of the maximally graded case as was needed to study the loop A-model. 
\subsection{A resonant Morsification}
As in the maximally graded and loop cases, we introduce the Morsification $\wt_\varepsilon=\xt^p+\xt\yt^q-\varepsilon\xt\yt$, and observe that it descends to $\widecheck{\overline{\w}}_\varepsilon:X\rightarrow \C$ as 
\begin{align*}
\widecheck{\overline{\w}}_\varepsilon(u,v,w)=u^{\frac{p}{\ell}}+wv^{\frac{q-1}{\ell}}-\varepsilon w. 
\end{align*}
 Pulling this back to the chart $\widetilde{X}_i$, we get
\begin{align*}
\wttilde_{i}(\lambda_i,\mu_i)=\lambda_i^\frac{pi}{\ell}\mu_i^{\frac{(i-1)p}{\ell}}+\lambda_i^{\frac{(\ell-i)(q-1)}{\ell}+1}\mu_i^{\frac{(\ell+1-i)(q-1)}{\ell}+1}-\varepsilon\lambda_i\mu_i,
\end{align*}
where we continue to suppress the $\varepsilon$ from the notation when considering $\wttilde_\varepsilon:\widetilde{X}\rightarrow \C$ on charts. \\

The first thing to notice is that, when $p=\ell=2$ and $q=3$, $\wttilde_\varepsilon$ is only singular on one of the charts, where it is given by the loop polynomial with $p=q=2$. This observation actually constitutes part of the initial evidence of the conjecture of Futaki and Ueda, where they reasoned that the equivariant A--model should be derived equivalent to the maximally graded A--model of $\check{x}^2\check{y}+\check{x}\check{y}^2$ in this case. Whilst this turns out to be true, there is a subtlety in its proof. \\
In the maximally graded case of $\check{x}^2\check{y}+\check{x}\check{y}^2$, the smooth fibre is a thrice punctured torus. In the case of $\check{x}^2+\check{x}\check{y}^3$ with $\ell=2$, it is a \emph{twice} punctured torus. The reason being that, even though $\wttilde_\varepsilon:\widetilde{X}\rightarrow \C$ only has critical points on the chart $\widetilde{X}_2$, the smooth fibre contains the point $(-\delta,0)\in \widetilde{X}_1$ which is not visible in $\widetilde{X}_2$. Therefore, the difference in smooth fibres gives rise to the possibility that the Fukaya--Seidel category doesn't match that of the corresponding maximally graded loop polynomial. As it turns out, in this case, there are no compositions to compute -- the resulting quiver is just the $D_4$ quiver with no relations.\\
More generally, any chain polynomial $\check{x}^p+\check{x}\check{y}^{np+1}$ with $\ell=p$ will have all critical points visible in the charts $\widetilde{X}_2,\dots, \widetilde{X}_\ell$. Moreover, on these charts, the superpotential agrees with the superpotential of $\check{x}^p\check{y}+\check{x}\check{y}^{n(p-1)+1}$ with $\ell=p-1$. Whilst both Milnor fibres have the same genus, the former has two punctures and the latter has three. In these more general cases, there \emph{are} discs which contribute to products, with some of these discs in the chain case passing through the additional point in comparison to the loop case. For example, capping off the boundary component between the left and middle cylinders in \cref{fig:milnorfiberexample} results in the Milnor fibre for $\check{x}^3+\check{x}\check{y}^7$ -- see \cref{fig:compactifyingcomparisonexample} for a sketch of how the fibres above the origin compare. Similarly, comparing \cref{figLoopQuiver} and \cref{figChainQuiver} in these cases, we see that the quivers have the same shape, although the relations are different. \\
Fortunately, the strategy of proof for the chain cases where $\ell=p$ go through with essentially only superficial alteration.

\begin{figure}
	\centering
	\includegraphics[width=0.3\linewidth]{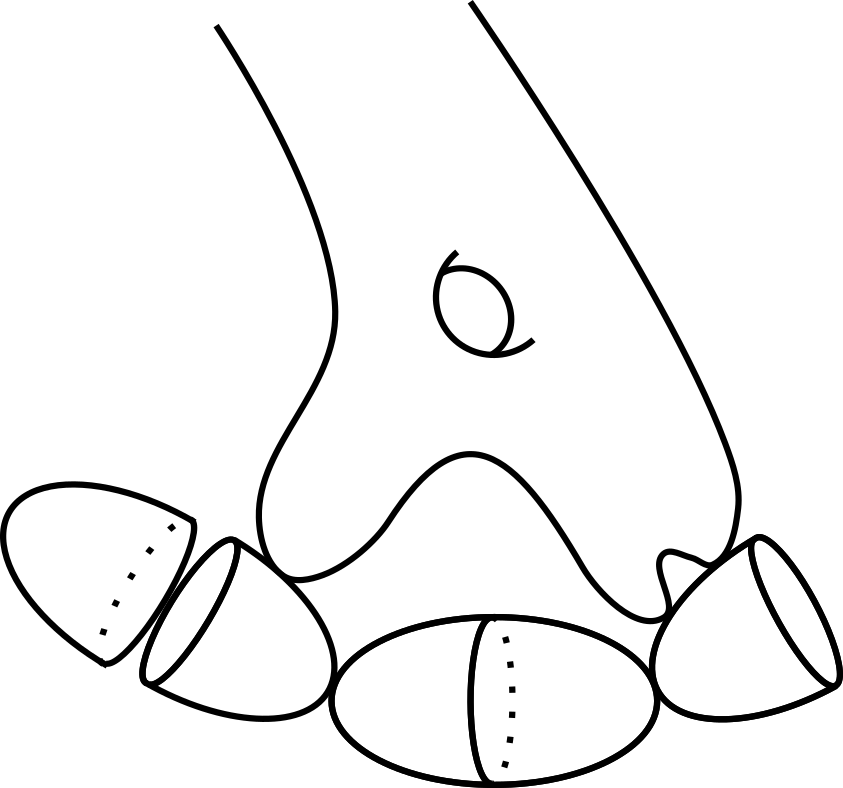}
	\caption{Sketch of fibre above the origin for $\check{x}^3+\check{x}\check{y}^7$.}
	\label{fig:compactifyingcomparisonexample}
\end{figure}

For chain polynomials with $p>\ell$, the critical points are grouped into the following three types:
\begin{enumerate}[(i)]
	\item \label{crit1chain} $\mu_\ell^{\frac{q-1}{\ell}}=\varepsilon$, $\lambda_\ell=0$
	\item \label{crit2chain} $\mu_i=\lambda_i=0$ for $i=1,\dots, \ell$
	\item \label{crit3chain} $\mu_\ell^{\frac{q-1}{\ell}}=\frac{\varepsilon}{q}$, $\lambda_\ell^{p-1}=\frac{\varepsilon(q-1)}{pq}\mu_\ell^{1-\frac{(\ell-1)p}{\ell}}$.
\end{enumerate}
For the case of $p=\ell$, there are only $\ell-1$ critical points of type (\ref{crit2chain}), although there is a critical point at $\lambda_1=0,\ \mu_1=\frac{1}{\varepsilon}$, which we notate as being of type (\ref{crit2chain})'. For the purposes of computation, this distinction is unimportant. In particular, in the parallel transport computations, the amount of winding in each neck region is not determined by where two irreducible components intersect, but rather the total change in argument of the vanishing path. We will therefore treat the cases of $p=\ell$ and $p\neq\ell$ together, notating the vanishing cycle corresponding to the critical point of type (ii)' as $\vc{\lambda_1\mu_1}{}$. \\
The critical values of type (i) and (ii) are both $0$, and the critical value corresponding to a critical point of type (iii) is 
\begin{align*}
\frac{-\varepsilon\mu_\ell\lambda_\ell(p-1)(q-1)}{pq}.
\end{align*}
Similarly to the loop case, there is a clear symmetry of these critical points. Namely, let $(\lambda_{\ell,\text{crit}}^+,\mu_{\ell,\text{crit}}^+)$ be the unique positive real critical point of type (\ref{crit3chain}) in the chart $\widetilde{X}_\ell$, with corresponding critical value $c_{\text{crit}}$. Letting $\zeta$ and $\eta$ denote the roots of unity
\begin{align}
\label{ChainRootsOfUnity}
\zeta = e^{2\pi i/(p-1)} \quad \text{and} \quad \eta = e^{2\pi i/(q-1)},
\end{align}
as well as $\alpha=e^{2\pi i/(p-1)(q-1)}$, we see that there is a $\mu_{p-1}\times\mu_{q-1}$ action on the critical points of type (\ref{crit3chain}) given by
\begin{align*}
\{(\zeta^{m}\eta^{n(1-\ell)}\alpha^n\lambda_{\ell,\text{crit}}^+, \eta^{n\ell}\mu_{\ell,\text{crit}}^+) : 0 \leq m \leq p-2\text{, } 0 \leq n \leq q-2\}.
\end{align*}
The critical value corresponding to $(\zeta^{m}\eta^{n(1-\ell)}\alpha^n\lambda_{\ell,\text{crit}}^+, \eta^{n\ell}\mu_{\ell,\text{crit}}^+)$ is $\alpha^{m(q-1)+np}c_\mathrm{crit}$, so there are $\frac{\gcd(p, q-1)}{\ell}$ critical points in each of these critical fibres. We therefore restrict to the subset 
\begin{align*}
\{(\zeta^{m}\eta^{n(1-\ell)}\alpha^n\lambda_{\ell,\text{crit}}^+, \eta^{n\ell}\mu_{\ell,\text{crit}}^+) : 0 \leq m \leq p-2\text{, } 0 \leq n \leq \frac{q-1}{\ell}-1\}
\end{align*}
in order to describe all of the critical points of type (\ref{crit3chain}) via symmetry.\\

The technical input and strategy for the chain case is identical to that of the loop case, with the only differences being superficial. We therefore keep this section brief, and only describe the alterations necessary in the chain case as they differ from the loop and maximally graded chain cases. \\

The vanishing paths corresponding to critical points (\ref{crit1chain}) and (\ref{crit2chain}) are still straight lines form $-\delta$ to the origin, and we denote the corresponding vanishing cycles by $\vc{\lambda_1\fact_1}{n}$ and $\vc{\lambda_i\mu_i}{}$. The preliminary vanishing paths $\gamma_{m,n}$ are given by the circular arc from $-\delta e^{-\theta}$ as $\theta$ increases from $0$ to 
\begin{align*}
\theta_{m,n}=2\pi\left(\frac{m}{p-1}+\frac{pn}{(p-1)(q-1)}\right),
\end{align*}
and then the straight line path from $-\alpha^{m(q-1)+np}\delta$ to $\alpha^{m(q-1)+np}c_{\mathrm{crit}}$. Again, we write $\vcpr{0}{m,n}$ for the corresponding vanishing cycles.
\subsection{The zero fibre and its smoothing}
 The fibre of $\wttilde_{\varepsilon}$ above the origin has $\ell+1$ components. Namely, it comprises: the line $\{\lambda_\ell=0\}$ in the chart $\widetilde{X}_{\ell}$, the complex lines $(0,\mu_i)$ and $(\lambda_{i+1},0)$ in the charts $\widetilde{X}_i$ and $\widetilde{X}_{i+1}$, respectively, which patch together to give $\ell-1$ projective lines in an $A_{\ell-1}$ configuration, and the curve given by  $\fact_i=\wttilde_{i}\lambda_i^{-1}\mu_i^{-1}$ in the charts $\widetilde{X}_{i}$ for $i=2,\dots, \ell$, and by $\fact_1=\wttilde_1\lambda_1^{-1}$ in $\widetilde{X}_1$. 
 
 \begin{figure}[H]
 	\centering
 	\scalebox{.5}{
 	\includegraphics[width=0.9\linewidth]{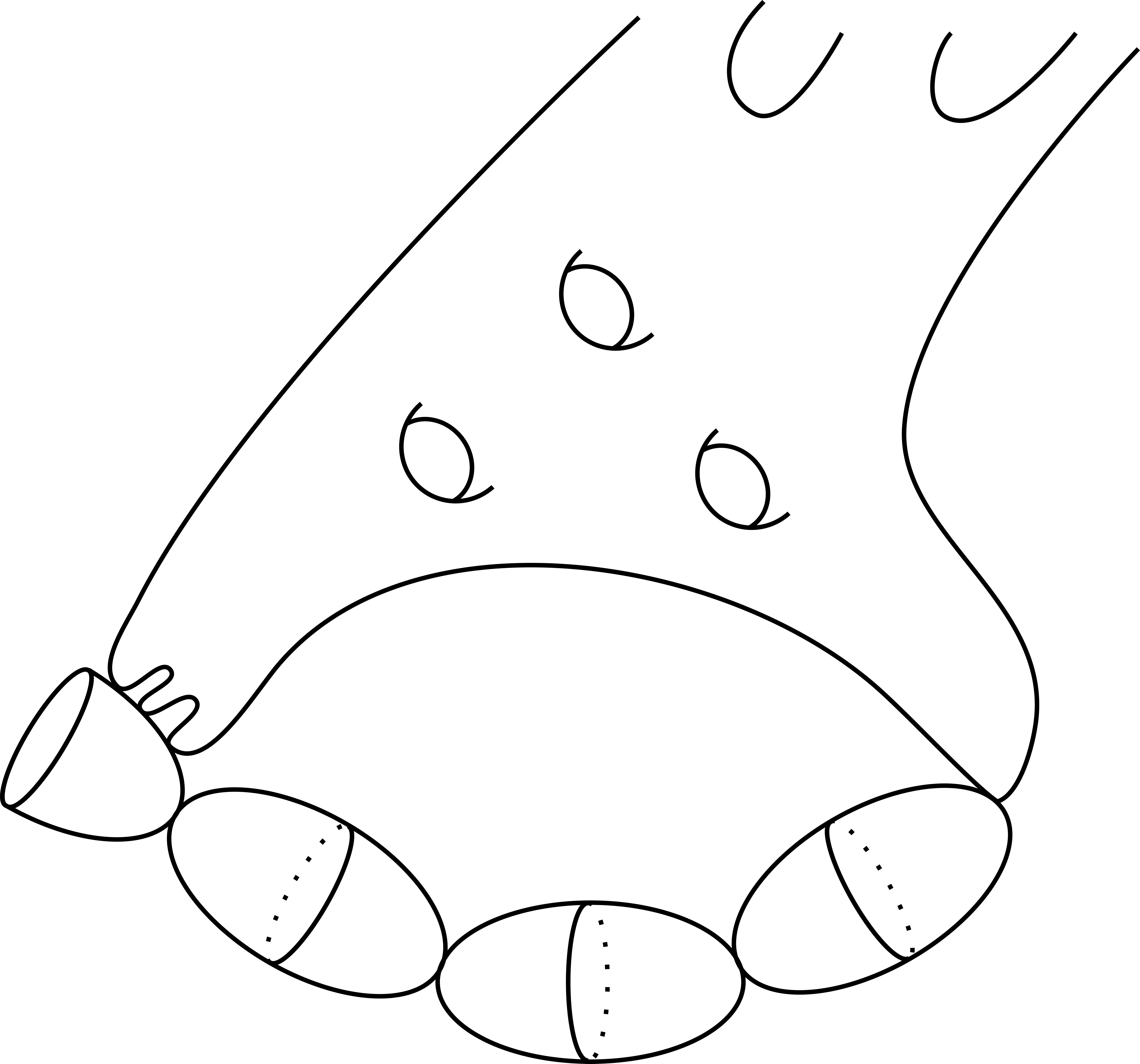}} 
 	\caption{A sketch of the fibre of $\wttilde_\varepsilon$ above the origin.}
 	\label{fig:chainsingmf}
 \end{figure}

\begin{rmk}
The topology of the smooth fibre was computed in \cite[Section 6.2]{HabStackyCurves}. Namely, it is a curve of genus 
\begin{align*}
g(\Sigma)=\frac{1}{2\ell}(pq-p+\ell-\gcd(\ell q,p+q-1))
\end{align*}
with $1+\gcd(q,\frac{p+q-1}{\ell})$ boundary punctures. 
\end{rmk}
\subsection{The vanishing cycles}
Just as in the loop case, we fist construct the real vanishing cycle $\vcpr{0}{0,0}$ and then construct the rest by a combination of symmetry considerations and parallel transport. In this case, the symplectomorphism 
$f_{m,n}^{(i)}$ is given by 
\begin{align*}
f_{m,n}^{(i)}:\widetilde{X}_i&\rightarrow \widetilde{X}_i\\
(\lambda_i,\mu_i)&\mapsto (\xi^{m(\ell+1-i)}\eta^{n(1-i)}\alpha^{n(\ell+1-i)}\lambda_i,\xi^{n(i-\ell)}\eta^{ni}\alpha^{n(i-\ell)}\mu_i). 
\end{align*}
Applying the parallel transport arguments of the loop case, we see that the $\mu_i$ coordinate of the Lagrangian $\vcpr{0}{m,n}$ interpolates between 
\begin{align*}
-2\pi\left(\frac{m(q-1)(\ell+1-i)+np(1-i)+n\ell}{(p-1)(q-1)}\right)\quad\text{and}\quad 2\pi\left(\frac{m(i-\ell)(q-1)+nip-n\ell}{(p-1)(q-1)}\right)
\end{align*}
in the neck region containing $\vc{\lambda_i\mu_i}{}$ as $|\mu_\ell|$ increases, whilst the argument $\mu_\ell-\eta^{n\ell}\mu_{\ell,\text{crit}}^+$ interpolates the other way in its argument about the neck region containing $\vc{\lambda_1\fact_1}{n}$. \\

As in the loop case, we modify the vanishing paths so that they are disjoint away from the distinguished point when the difference in the argument of the straight-line segments is less than $2\pi$. We then push the remaining paths to go outside of the critical points, and see that this does not affect the vanishing cycles. Correspondingly, we deduce the analogue of \cref{propAfromPrelim} in the chain case. We then Hamiltonian isotope the vanishing cycles in the distinguished fibre in the same way as in the loop case (although we only need to do this on the neck regions in $\widetilde{X}_\ell$ now), producing a collection of ordered and transversely intersecting Lagrangians with which we can compute $\mathcal{A}_{\widecheck{\overline{\Gamma}}}$. 
\subsection{Composition and gradings}
As in the loop case, the Floer complexes can be taken to all be graded in degree zero, and this is proven in the same way. With this grading, we have the proof of the main theorem in the chain case. 
\begin{thm}[\cref{mainTheorem}, undeformed chain polynomial case]
Under the correspondence 
	\begin{equation}
\label{ChainObjectMatch}
\begin{aligned}
\vc{0}{m,n} &\leftrightarrow \obj{0}{i,j}
\\ \vc{\lambda_1\fact}{n} &\leftrightarrow \obj{y}{j}[3]
\\ \vc{\lambda_k\mu_k}{} &\leftrightarrow \obj{w_r}{}[3]
\end{aligned}
\quad \text{with} \quad
\begin{aligned}
i+m&=p-1
\\ j+n&=q-1\\
k+r&=\ell
\end{aligned}
\end{equation}
the $\Z$-graded $A_\infty$-category $\mathcal{A}_{\widecheck{\overline{\Gamma}}}$ is identified with the quiver algebra of \cref{ChainEndAlgebra} and is formal, so there is a quasi-equivalence 
\begin{align*}
\mf(\C^2,\Gamma,\w)\simeq \mathcal{FS}(\wttilde)
\end{align*}
\end{thm}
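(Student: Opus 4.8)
The plan is to mirror the structure of the loop case, carrying out the same four-step program—identify objects, compute morphisms, establish composition and gradings, appeal to formality—but now in the setting $\wt=\xt^p+\xt\yt^q$, and to isolate the few places where the chain geometry genuinely differs from the loop geometry already treated in Sections~\ref{LoopAModel} and~\ref{ChainBmodel}. Throughout, I take for granted the results of \cref{CrepResolutionSection} (the crepant resolution $\pi:\widetilde X\to X$ of the $A_{\ell-1}$ singularity, its symplectic form, and the non-exact Fukaya--Seidel set-up via the conditions of \cite{AOK}), the description of the critical points of $\wttilde_\varepsilon$ into types (\ref{crit1chain})--(\ref{crit3chain}) with their $\mu_{p-1}\times\mu_{q-1}$ symmetry, and the quiver-with-relations presentation of $\mathcal B$ from \cref{ChainEndAlgebra}. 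I also use the generation statement \cref{ChainBGenerates}.

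First I would set up the combinatorial model of $\mathcal{A}_{\widecheck{\overline{\Gamma}}}$. Fixing the regular fibre as in the loop case and using the resonant (hence $\widecheck{\overline{\Gamma}}$-equivariant) Morsification $\wt_\varepsilon=\xt^p+\xt\yt^q-\varepsilon\xt\yt$, one constructs the real preliminary vanishing cycle $\vcpr{0}{0,0}$ by the same parallel-transport analysis, working in the chart $\widetilde X_\ell$ where the type-(\ref{crit3chain}) critical points live, and then obtains $\vcpr{0}{m,n}$ by applying the symplectomorphisms $f_{m,n}^{(i)}$ and parallel transporting around the circular arcs $\gamma_{m,n}$. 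The neck-region computations are the same ODE \eqref{ParTransODE}; the only bookkeeping change is the different exponents appearing in $\wttilde_i$ and the different $\theta_{m,n}$, which feed into the interpolation formulae for the $\mu_i$-arguments quoted just before this subsection. After perturbing the vanishing paths to a distinguished basis (the analogue of \cref{propAfromPrelim}) and Hamiltonian-isotoping the final cycles to transversality—now only on the $\widetilde X_\ell$ neck regions—one reads off that, additively, the objects match those of $\mathcal B$ under \eqref{ChainObjectMatch}, and then checks the composition rules: the holomorphic triangle counts yielding relations (i) and (iii) of \cref{ChainEndAlgebra} go through verbatim as in the loop case (case (i)/(iv) counts with $L_\cup$ three circles, two of which meet $\mathrm{rank}_\C HF^*$ times, using that the only homologous vanishing cycles are the $\vc{\lambda_i\mu_i}{}$), and the relation $ay=0$ replaces $ay=bx=0$ because there is no $\obj{x}{}$ object here. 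The sign analysis and the spin-structure choice are identical.

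For the gradings, I would use the grading on $\Sigma$ restricted from the unique (up to homotopy) grading on $\widetilde X$, which has $c_1(\widetilde X)=0$ by crepancy and $H^1(\widetilde X)=0$ since $\widetilde X$ is homotopy equivalent to a bouquet of $2$-spheres; the corresponding line field on the cylinder-gluing presentation of the smooth fibre was recorded in \cite[Section~7.1.2]{HabStackyCurves} (approximately horizontal on cylinders, parallel to attaching strips). With the ordering of \cref{propAfromPrelim}'s chain analogue and the grading normalisation $0<\alpha^\#_{L_i}<1/2$ with $\alpha^\#$ increasing along the order, every intersection point lands in degree $0$, so the Floer differential vanishes and $\mathcal{A}_{\widecheck{\overline{\Gamma}}}$ is concentrated in degree $0$, hence intrinsically formal. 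By \cref{ChainEndAlgebra} this identifies it with the quiver algebra of Figure~\ref{figChainQuiver}, which is the cohomology endomorphism algebra of $\mathcal B$; passing to twisted complexes and invoking \cref{ChainBGenerates} (together with the non-exact Fukaya--Seidel formalism of \cite{AOK} giving $\Tw\mathcal{A}_{\widecheck{\overline{\Gamma}}}=\mathcal{FS}(\wttilde)$) yields the claimed quasi-equivalence $\mf(\C^2,\Gamma,\w)\simeq\mathcal{FS}(\wttilde)$.

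The main obstacle I anticipate is not any single deep point but the careful handling of the degenerate low-rank cases, in particular $p=2$: there the Morsification is singular on only one chart and the smooth fibre is genuinely not $\wttilde_i^{-1}(-\delta)$ for any $i$ (the line $\{\mu_1=0\}$ is invisible except in the first chart), so one must argue—as the remarks preceding this subsection indicate—that the holomorphic curves relevant to $\mathcal{FS}(\wttilde)$ do not pass through $\{\mu_1=0\}$, reducing the computation to the maximally graded loop polynomial already understood; hence restricting to $p>2$ in the main body is legitimate. The secondary technical care-point is making the neck-region parallel-transport estimates robust to the extra $+1$'s in the exponents of $\wttilde_i$ (coming from the $\xt\yt^q$ monomial rather than a pure loop monomial), but this only changes constants in the hyperbola approximation \eqref{HyperbolicApproximation} and the interpolation endpoints, not the qualitative picture.
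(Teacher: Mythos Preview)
Your proposal is correct and follows essentially the same approach as the paper: the paper's actual proof sentence is just ``the only thing to check is that the morphisms in $\mathcal{A}_{\widecheck{\overline{\Gamma}}}$ compose in the claimed way, but this follows by the same reasoning in the loop case,'' with all the setup (resonant Morsification, parallel transport in neck regions, the $f_{m,n}^{(i)}$ symmetry, isotoping only on the $\widetilde X_\ell$ necks, the line field from \cite[Section~7.1.2]{HabStackyCurves}, and the $p=2$ reduction to the maximally graded loop case) already laid out in the preceding subsections of \cref{ChainAmodel}. Your four-step outline and the specific differences you flag---the absence of $\obj{x}{}$ objects giving $ay=0$ in place of $ay=bx=0$, the chart $\widetilde X_\ell$ as the natural home for the type-(\ref{crit3chain}) critical points, and the degenerate $p=2$ case---match the paper's treatment exactly.
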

\begin{proof}
The only thing to check is that the morphisms in $\mathcal{A}_{\widecheck{\overline{\Gamma}}}$ compose in the claimed way, but this follows by the same reasoning as in the loop case.
\end{proof}
\section{Brieskorn--Pham polynomials}
\label{BPPolynomials}
\subsection{The B-model} 
We now deal with the last, and most simple, of invertible polynomials in two variables. Namely, we let $\w=x^p+y^q$, and consider the $L$-graded rings $S=\C[x,y]$ and $R=S/(\w)$, where $L$ is generated by $|x|=\x$ and $|y|=\y$ modulo the relations
\begin{align*}
\frac{p}{\ell}\x=\frac{q}{\ell}\y,
\end{align*}
where $\ell\leq d=\gcd(p,q)$ is the index of $\Gamma$ in $\Gamma_{\w}$ so that $L\simeq\Z\oplus\Z/\left(\frac{d}{\ell}\right)$ and $\w$ is homogeneous of degree $\vec{c}=p\x=q\y$. Similarly to the loop case, we assume without loss of generality that $p\geq q\geq2$. Moreover, we have $L/\vec{c}\Z\simeq\Z/\left(\frac{pq}{d}\right)\times \Z/\left(\frac{d}{\ell}\right)$. Similarly to the previous cases, we write $\w=w_1\dots w_\ell$, where
\begin{align*}
w_r=x^{\frac{p}{\ell}}-e^{\frac{\pi\sqrt{-1}}{\ell}}\eta^ry^{\frac{q}{\ell}}
\end{align*}
for $\eta$ a primitive $\ell^{\text{th}}$ root of unity. With this, there are $\ell$ matrix factorisations coming from equivariantly factoring $\w$ 
\begin{align*}
\obj{{w_r}}{}^\bullet = ( \cdots \rightarrow S(-\vec{c}) \xrightarrow{\w/w_r} S(-\frac{p}{\ell}\x) \xrightarrow{w_r} S\rightarrow \cdots )
\end{align*}
In the maximally graded case, this is the zero object in the category of matrix factorisations, but in the non-maximally graded case these give non-trivial objects. We will therefore assume from now on that $\ell>1$, since the two cases are treated differently and the maximally graded case is already established as a special case of \cite{FutakiUedaBP}. \\

The modules supported at the origin are the analogues of those in the loop and chain cases. Namely, we write $k=\lfloor\frac{(j-1)\ell}{q}\rfloor$, and for $i=(\ell-1)\frac{p}{\ell}+1,\dots, p-1$ and $j=1,\dots, q-1$, consider the ideals
\begin{align*}
I_{i,j}&=(x^{i-(\ell-k-1)\frac{p}{\ell}})+\sum_{t=1}^k(x^{i-(\ell-k-1+t)\frac{p}{\ell}}y^{j-(k+1-t)\frac{q}{\ell}})+(y^j)\\
&=(x^{i-(\ell-k-1)\frac{p}{\ell}},x^{i-(\ell-k)\frac{p}{\ell}}y^{j-k\frac{q}{\ell}},\dots,  x^{i-(\ell-1)\frac{p}{\ell}}y^{j-\frac{q}{\ell}},y^j).
\end{align*}
In addition, we need to consider the ideals
\begin{align*}
I_{(\ell-1)\frac{p}{\ell},j}&=(x^{k\frac{p}{\ell}})+\sum_{t=1}^{k-1}(x^{(k-t)\frac{p}{\ell}}y^{j-(k+1-t)\frac{q}{\ell}})+(y^{j-\frac{q}{\ell}})\\
&=(x^{k\frac{p}{\ell}},x^{(k-1)\frac{p}{\ell}}y^{j-k\frac{q}{\ell}},\dots ,x^{\frac{p}{\ell}}y^{j-2\frac{q}{\ell}},y^{j-\frac{q}{\ell}})
\end{align*}
for $i=(\ell-1)\frac{p}{\ell}$ and $j=\frac{q}{\ell}+1,\dots, q-1$. In both cases, we let $\obj{0}{i,j}$ be the matrix factorisation corresponding to the $L$-graded $R$-module
\begin{align*}
R(i\x+j\y)/(I_{i,j}).
\end{align*}
For the modules with $i>(\ell-1)\frac{p}{\ell}$, this is given by the rank $(k+2)$ matrix factorisation
 \begin{center}
	\begin{tikzcd}[row sep=4ex, column sep=5ex]
	S(\vec{c}-\frac{p}{\ell}\x) \ar[d, phantom, description, "\bigoplus"]  \ar[dd, phantom, description, "\cdots\hskip7ex\phantom{\hskip20ex\cdots}"] &S(\vec{c}-(k+1)\frac{p}{\ell}\x+j\y)  \ar[d, phantom, description, "\bigoplus"] & S(2\vec{c}-\frac{p}{\ell}\x)  \ar[d, phantom, description, "\bigoplus"] \ar[dd, phantom, description, "\cdots\hskip7ex\phantom{\hskip-40ex\cdots}"]
	\\ 
	\bigoplus_{t=0}^{k-1} S(\vec{c}-\frac{p}{\ell}\x) \ar[d, phantom, description, "\bigoplus"] \ar[r, shorten >=2ex, shorten <=2ex,"\diff_0"] &\bigoplus_{t=0}^{k-1}S(\vec{c}) \ar[d, phantom, description, "\bigoplus"] \ar[r, shorten >=2ex, shorten <=2ex,"\diff_1"] & \bigoplus_{t=0}^{k-1}S(2\vec{c}-\frac{p}{\ell}\x) \ar[d, phantom, description, "\bigoplus"]\\
	S(i\x+j\y-\vec{c})&S(i\x)&S(i\x+j\y)
	\end{tikzcd}
\end{center}
where 

\begin{align}\label{BPEvenDifferential}
\diff_0=\begin{pmatrix}
y^{j-k\frac{q}{\ell}} & 0 & 0  & \dots & 0 &x^{p-i+\frac{p}{\ell}(\ell-1-k)}\\
-x^{\frac{p}{\ell}}& y^{\frac{q}{\ell}} & 0 &\dots& 0 & 0\\
0 & -x^{\frac{p}{\ell}} & y^{\frac{q}{\ell}} & \dots &0 & 0\\
0 & 0 & -x^{\frac{p}{\ell}}&  \dots & 0& 0\\
\vdots &\vdots & \vdots   & \ddots& \vdots  & \vdots \\
0 & 0 & 0  & \hdots & y^{\frac{q}{\ell}} & 0\\
0 & 0 & 0  & \hdots& -x^{i-(\ell-1)\frac{p}{\ell}} & y^{q-j}\\
\end{pmatrix},
\end{align}
$\diff_1=\mathrm{Adj}(\diff_0)$, and the differentials go between the whole rows, not just the middle modules. The matrix factorisations for $\obj{0}{(\ell-1)\frac{p}{\ell},j}$ are similar, although are of rank $k+1$. Analogously to the loop and chain cases, we consider the category $\mathcal{B}$ whose objects are $R/(w_r)[3]$ for $r=1,\dots, \ell$ and $\obj{0}{i,j}$ above. Arguing as before, we arrive at: 

\begin{thm}
	\label{BPEndAlgebra}
	The cohomology-level total endomorphism algebra of the objects of $\mathcal{B}$ is the algebra of the quiver-with-relations described in Figure \ref{figBPQuiver}, with all arrows living in degree zero.  In particular, $\mathcal{B}$ is a $\Z$-graded $A_\infty$-category concentrated in degree $0$, so is intrinsically formal.
	\begin{figure}[ht]
	\centering
	\begin{tikzpicture}[blob/.style={circle, draw=black, fill=black, inner sep=0, minimum size=\blobsize}, arrow/.style={->, shorten >=6pt, shorten <=6pt}, scale = 0.9]
	\def\blobsize{1.2mm}
	
	\draw (3.5,0) node[blob]{};
	\draw (2.5,0) node[]{$\cdots$};
	\draw (1.5,0) node[blob]{};

	\draw (3.5,1) node[blob]{};
	\draw (2.5,1) node[]{$\cdots$};
	\draw (1.5,1) node[blob]{};

	\draw (3.5,3.5) node[blob]{};
	\draw (2.5,3.5) node[]{$\cdots$};
	\draw (1.5,3.5) node[blob]{};

	\draw[arrow] (-0.4,0) -- (0.5,0);
	\draw[arrow] (-0.4,1) -- (0.5,1);
	\draw[arrow] (-0.4,3.5) -- (0.5,3.5);

	\draw[arrow] (1.5,0) -- (1.5,1);	
	\draw[arrow] (3.5,0) -- (3.5,1);

	\draw[arrow] (1.5,1) -- (1.5,2);	
	\draw[arrow] (3.5,1) -- (3.5,2);

	\draw[arrow] (1.5,2.5) -- (1.5,3.5);	
	\draw[arrow] (3.5,2.5) -- (3.5,3.5);

	\draw[arrow] (1.5,0) -- (2.4,0);
	\draw[arrow] (2.6,0) -- (3.5,0);

	\draw[arrow] (1.5,1) -- (2.4,1);
	\draw[arrow] (2.6,1) -- (3.5,1);

	\draw[arrow] (1.5,3.5) -- (2.4,3.5);
	\draw[arrow] (2.6,3.5) -- (3.5,3.5);
	
	\draw (1.5,2.4) node[]{$\vdots$};
	\draw (3.5,2.4) node[]{$\vdots$};
	\draw (2.5,2.4) node[]{$\iddots$};

	\draw (5,5.5) node{$\obj{w_r}{}[3]$};
	
	\draw [
	thick,
	decoration={
		brace,
		mirror,
		raise=0.5cm
	},
	decorate
	] (1.4,-.25) -- (3.6,-.25)
	node [pos=0.5,anchor=north,yshift=-0.55cm] {$\frac{q}{\ell}-1$};

	\begin{scope}[yshift=3.5cm]

\draw[arrow] (3.5,0) -- ({3.5+2*cos(360/14)},{2*sin(360/14)} ) node[pos=0.6,above]{$c_\ell$};
\draw[arrow] (3.5,0) -- ({3.5+2*cos(5*360/28)},{2*sin(5*360/28)} ) node[pos=0.6,right]{$c_1$};

\end{scope}

\begin{scope}[yshift=3.5cm, xshift=3.5cm]
\foreach \a in {2,5}{
	\draw (\a*360/28: 2) node[blob]{};
}
\foreach \a in {1,2,3}{
	\draw[fill] (\a*135/14+360/14: 2) circle[radius=1pt];
}

\end{scope}

	\draw[line width=0.5mm, opacity=0.2] (1, -0.5) rectangle (4, 4);

	\begin{scope}[xshift=-7cm]
	\draw (0.5,0) node[]{$\cdots$};
	\draw (2,0) node[]{$\cdots$};
	\draw (3.5,0) node[]{$\cdots$};
	
	\draw (0.5,1) node[]{$\cdots$};
	\draw (2,1) node[]{$\cdots$};
	\draw (3.5,1) node[]{$\cdots$};
	
	\draw (0.5,2.4) node[]{$\vdots$};
	\draw (2,2.4) node[]{$\iddots$};
	\draw (3.5,2.4) node[]{$\vdots$};
	
	\draw (0.5,3.5) node[]{$\cdots$};
	\draw (2,3.5) node[]{$\cdots$};
	\draw (3.5,3.5) node[]{$\cdots$};
	
	\draw[arrow] (0.5,3.5) -- (4.5,0);
	\draw[arrow] (1.5,3.5) -- (5.5,0);
	\draw[arrow] (2.5,3.5) -- (6.5,0);
	\draw[arrow] (3.5,3.5) -- (7.5,0);
	
		\draw[arrow] (3.6,0) -- (4.5,0);
	\draw[arrow] (3.6,1) -- (4.5,1);
	\draw[arrow] (3.6, 3.5) -- (4.5,3.5);
	
	\draw[line width=0.5mm, opacity=0.2] (0, -0.5) rectangle (4, 4);

	\end{scope}
	
	\begin{scope}[xshift=-3cm]
	\draw (3.5,0) node[blob]{};
	\draw (2.5,0) node[]{$\cdots$};
	\draw (1.5,0) node[blob]{};
	\draw (0.5,0) node[blob]{};
	
	\draw (3.5,1) node[blob]{};
	\draw (2.5,1) node[]{$\cdots$};
	\draw (1.5,1) node[blob]{};
	\draw (0.5,1) node[blob]{};
	
	\draw (3.5,3.5) node[blob]{};
	\draw (2.5,3.5) node[]{$\cdots$};
	\draw (1.5,3.5) node[blob]{};
	\draw (0.5,3.5) node[blob]{};
	
	\draw[arrow] (3.5,0) -- (4.5,0);
	\draw[arrow] (3.5,1) -- (4.5,1);
	\draw[arrow] (3.5, 3.5) -- (4.5,3.5);

	\draw[arrow] (0.5,0) -- (0.5,1);
	\draw[arrow] (1.5,0) -- (1.5,1);	
	\draw[arrow] (3.5,0) -- (3.5,1);	
	
	\draw[arrow] (0.5,1) -- (0.5,2);
	\draw[arrow] (1.5,1) -- (1.5,2);	
	\draw[arrow] (3.5,1) -- (3.5,2);

	\draw[arrow] (0.5,2.5) -- (0.5,3.5);
	\draw[arrow] (1.5,2.5) -- (1.5,3.5);	
	\draw[arrow] (3.5,2.5) -- (3.5,3.5);	
	
	\draw[arrow] (0.5,0) -- (1.5,0);
	\draw[arrow] (1.5,0) -- (2.4,0);
	\draw[arrow] (2.6,0) -- (3.5,0);
	
	\draw[arrow] (0.5,1) -- (1.5,1);
	\draw[arrow] (1.5,1) -- (2.4,1);
	\draw[arrow] (2.6,1) -- (3.5,1);
	
	\draw[arrow] (0.5,3.5) -- (1.5,3.5);
	\draw[arrow] (1.5,3.5) -- (2.4,3.5);
	\draw[arrow] (2.6,3.5) -- (3.5,3.5);
	
	\draw (0.5,2.4) node[]{$\vdots$};
	\draw (1.5,2.4) node[]{$\vdots$};
	\draw (3.5,2.4) node[]{$\vdots$};
	\draw (2.5,2.4) node[]{$\iddots$};	
	
	\draw[arrow] (0.5,3.5) -- (4.5,0);
	\draw[arrow] (1.5,3.5) -- (5.5,0);
	\draw[arrow] (2.5,3.5) -- (6.5,0);

	\draw[line width=0.5mm, opacity=0.2] (0, -0.5) rectangle (4, 4);
	\draw [
	thick,
	decoration={
		brace,
		mirror,
		raise=0.5cm
	},
	decorate
	] (0.4,-.25) -- (3.6,-.25)
	node [pos=0.5,anchor=north,yshift=-0.55cm] {$\frac{q}{\ell}$};
	
	\end{scope}

		\begin{scope}[xshift=-11cm]
	
	\draw (3.5,1) node[blob]{};
	\draw (2.5,1) node[]{$\cdots$};
	\draw (1.5,1) node[blob]{};
	\draw (0.5,1) node[blob]{};
	
	\draw (3.5,3.5) node[blob]{};
	\draw (2.5,3.5) node[]{$\cdots$};
	\draw (1.5,3.5) node[blob]{};
	\draw (0.5,3.5) node[blob]{};
	
	\draw[arrow] (3.5,1) -- (4.4,1);
	\draw[arrow] (3.5, 3.5) -- (4.4,3.5);

	\draw[arrow] (0.5,1) -- (0.5,2);
	\draw[arrow] (1.5,1) -- (1.5,2);	
	\draw[arrow] (3.5,1) -- (3.5,2);

	\draw[arrow] (0.5,2.5) -- (0.5,3.5);
	\draw[arrow] (1.5,2.5) -- (1.5,3.5);	
	\draw[arrow] (3.5,2.5) -- (3.5,3.5);

	\draw[arrow] (0.5,1) -- (1.5,1);
	\draw[arrow] (1.5,1) -- (2.4,1);
	\draw[arrow] (2.6,1) -- (3.5,1);
	
	\draw[arrow] (0.5,3.5) -- (1.5,3.5);
	\draw[arrow] (1.5,3.5) -- (2.4,3.5);
	\draw[arrow] (2.6,3.5) -- (3.5,3.5);
	
	\draw (0.5,2.4) node[]{$\vdots$};
	\draw (1.5,2.4) node[]{$\vdots$};
	\draw (3.5,2.4) node[]{$\vdots$};
	\draw (2.5,2.4) node[]{$\iddots$};	
	
	\draw[arrow] (0.5,3.5) -- (4.5,0);
	\draw[arrow] (1.5,3.5) -- (5.5,0);
	\draw[arrow] (2.5,3.5) -- (6.5,0);
	\draw[arrow] (3.5,3.5) -- (7.5,0);
	
	\draw[line width=0.5mm, opacity=0.2] (0, 0.5) rectangle (4, 4);
	\draw [
	thick,
	decoration={
		brace,
		raise=0.5cm
	},
	decorate
	] (0.25,0.9) -- (0.25,3.6)
	node [pos=0.5,anchor=east,xshift=-0.55cm] {$\frac{p}{\ell}-1$};
		\draw (2,4.5) node{$\obj{0}{i,j}$};
	\end{scope}

	\begin{scope}[yshift=-2.5cm,xshift=-2cm]
	\draw (0,-.15) node{\parbox{300pt}{\small \ \textbf{Morphisms:} \begin{enumerate}[(i)]\item Horizontal arrows are\\ labelled by $y$, \item Vertical and diagonal arrows\\ are labelled by $x$. \end{enumerate}}};
	\end{scope}
	
	\begin{scope}[yshift=-2.5cm,xshift=4cm]
	\draw (0,0) node{\parbox{300pt}{\small \ \textbf{Relations:} \begin{enumerate}[(i)]\item $xy=yx,$\item $c_r(x^{\frac{p}{\ell}}-e^{\frac{\pi i}{\ell}}\eta^ry^{\frac{q}{\ell}})=0$.\end{enumerate}}};
	\end{scope}
	\end{tikzpicture}
	\caption{The quiver describing the category $\mathcal{B}$ for Brieskorn--Pham polynomials. There are $\ell-2$ blocks of size $\frac{p}{\ell}\times\frac{q}{\ell}$, one of size $(\frac{p}{\ell}-1)\times\frac{q}{\ell}$, and one of size $\frac{p}{\ell}\times(\frac{q}{\ell}-1)$ \label{figBPQuiver}}
\end{figure}

\end{thm}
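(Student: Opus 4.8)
The plan is to follow the template of \cref{LoopEndAlgebra} and \cref{ChainEndAlgebra}: compute the graded morphism spaces between every pair of basic objects, check that the nonzero morphisms are generated by the arrows of Figure \ref{figBPQuiver} modulo the stated relations, and conclude intrinsic formality from the fact that everything sits in cohomological degree $0$. Recall we are assuming $\ell>1$ (the maximally graded Brieskorn--Pham case being \cite{FutakiUedaBP}) and, without loss of generality, $p\geq q$; as noted after \eqref{BPEvenDifferential}, the only objects of $\mathcal{B}$ are the $\obj{0}{i,j}$ and the shifted Fermat factorisations $\obj{w_r}{}[3]$, so there are genuinely fewer morphism types than in the loop or chain settings.

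First I would record the morphisms among the $\obj{w_r}{}$: the Buchweitz computation (\cite{Buchweitz}, applicable since $R$ is Gorenstein) applied to these rank-one factorisations shows, exactly as in the loop and chain cases, that they are exceptional and, since the $w_r$ are pairwise coprime, mutually orthogonal. The vanishing of $\Hom^\bullet(\obj{w_r}{},\obj{0}{i,j})$ is a routine exactness argument on the complex computing it. For the opposite direction I would use Serre duality: $R$ is $L$-graded Gorenstein of Krull dimension one with parameter $\alpha=\x+\y-\vec{c}$, so \eqref{SerreDuality} applies (all $\Hom$-spaces being finite dimensional as $\w$ has an isolated singularity), and as in \eqref{ChainARDuality} one checks that $\dim_\C\Hom^\bullet(\obj{w_r}{},\obj{0}{i,j}(\vec{c}-\x-\y))$ equals $1$ when $\bullet=-3$ and $0$ otherwise. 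Hence $\Hom^\bullet(\obj{0}{i,j},\obj{w_r}{})$ is one-dimensional, concentrated in degree $3$; the explicit generator $c_r$ is the obvious analogue of \cref{ChainKijKwrHoms}, obtained by writing down the vector in the kernel of $\diff_0^T$ (with $\diff_0$ as in \eqref{BPEvenDifferential}) using that its last row factors through $w_r$. Finally, $\Hom^\bullet(\obj{0}{i,j},\obj{0}{I,J})$ is identified, as before, with $\bigl(R/I_{I,J}\bigr)_{(I-i)\x+(J-j)\y}$ in degree $0$.

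The step I expect to be the main obstacle is extracting the precise shape of the quiver from this last identification. Unlike the loop and chain cases, here $L/\vec{c}\Z\simeq\Z/(pq/d)\times\Z/(d/\ell)$ is \emph{not} cyclic, and there are two families of modules: those with $(\ell-1)\frac{p}{\ell}<i\leq p-1$ and the lower-rank ones with $i=(\ell-1)\frac{p}{\ell}$. One must track, in each internal degree and uniformly over both families, which monomials survive in $R/I_{I,J}$ under the relation $\frac{p}{\ell}\x=\frac{q}{\ell}\y$ (which allows the $x$- or $y$-index to wrap around, as in the loop and chain settings). Done carefully, this produces exactly the horizontal ($y$) and vertical/diagonal ($x$) arrows assembling the $\obj{0}{i,j}$ into the two rectangular blocks of Figure \ref{figBPQuiver}, with no spurious morphisms; no single instance is hard, but the bookkeeping is where the argument can go wrong.

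It then remains to verify the relations. Commutativity $xy=yx$ is immediate, and $c_r\bigl(x^{p/\ell}-e^{\pi\sqrt{-1}/\ell}\eta^r y^{q/\ell}\bigr)=0$ follows by writing the morphisms of matrix factorisations representing $x^{p/\ell}$ and $y^{q/\ell}$ as block-identity matrices, as in the proof of \cref{LoopEndAlgebra}, and composing with the degree-$3$ generator above, which is precisely the computation carried out there. Since every arrow has cohomological degree $0$, the $A_\infty$-structure on $\mathcal{B}$ is concentrated in degree $0$, hence intrinsically formal, and the theorem follows.
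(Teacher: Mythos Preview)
Your proposal is correct and follows essentially the same approach as the paper, which gives no explicit proof beyond ``Arguing as before, we arrive at:'' and relies on the reader to transport the loop and chain arguments. Your identification of the non-cyclicity of $L/\vec{c}\Z$ and the two families of $\obj{0}{i,j}$ as the place where the bookkeeping differs is accurate and more explicit than anything the paper says, but it is the same computation the paper has in mind.
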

As before, we claim that the collection of objects in $\mathcal{B}$ generates $\mathrm{mf}(\C^2,\Gamma,\w)$; the proof follows the same strategy of Propositions \ref{BGenerates} and \ref{ChainBGenerates}. 
\begin{prop}
	\label{BPBGenerates}
	The functor
	\begin{align*}
	\Tw \mathcal{B} \rightarrow \mathrm{mf}(\C^2, \Gamma, \w)
	\end{align*}
	is a quasi-equivalence.\hfill\qed
\end{prop}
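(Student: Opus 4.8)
The plan is to run the proofs of \cref{BGenerates} and \cref{ChainBGenerates} essentially unchanged, adapting only the bookkeeping forced by the grading quotient, which here is the group $L/\vec{c}\Z\simeq\Z/\left(\frac{pq}{d}\right)\times\Z/\left(\frac{d}{\ell}\right)$. The argument has three parts. By \cref{PVGeneration}, $\mathrm{HMF}(\C^2,\Gamma,\w)$ is split-generated by the $L$-grading shifts of the stabilisation of $R/(x,y)$; since $[2]\simeq(\vec{c})$ in this category, it suffices to show that $R(l)/(x,y)$ lies in the triangulated subcategory $\langle V\rangle$ generated by the objects $V$ of $\mathcal{B}$ as $l$ ranges over a set of coset representatives of $L/\vec{c}\Z$. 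Granting this, \cref{BPEndAlgebra} shows that $\mathcal{B}$ is intrinsically formal, so the split-generation statement promotes to a quasi-equivalence $\Tw^\pi\mathcal{B}\to\mathrm{mf}(\C^2,\Gamma,\w)$; and since $\mathcal{B}$ is a full exceptional collection in $\Tw\mathcal{B}$, the latter is already idempotent complete by \cite[Remark 5.14]{SeidelBook}, whence $\Tw^\pi\mathcal{B}=\Tw\mathcal{B}$ and the proposition follows.

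The content, then, is the construction of $R(l)/(x,y)\in\langle V\rangle$ for each residue $l$. First I would note, exactly as in the loop and chain cases, that the defining presentations of the objects $\obj{0}{i,j}$ (both the generic rank-$(k+2)$ ones and the boundary rank-$(k+1)$ ones at $i=(\ell-1)\tfrac{p}{\ell}$) place the modules $R(i\x+j\y)/(x^{i-(\ell-1)\frac{p}{\ell}},\dots,y^j)$ in $\langle V\rangle$. Taking cones of the multiplication maps by powers of $x$ and $y$ between these quotients strips the defining ideals one generator at a time, and a final cone along a map of the form $(x\ y):R(\cdot)/(y)\oplus R(\cdot)/(x)\to R(\cdot)/(xy)$ produces the skyscrapers $R(a\x+b\y)/(x,y)$ for $a,b$ in the ranges reached directly by the $\obj{0}{i,j}$. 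The residues not of this form are obtained by the inductive device of \cref{BGenerates}: using the factorisation $\w=w_1\cdots w_\ell$ one first builds the grading-shifted factorisations $\obj{w_r}{}(l\x)$ from cones of the $\obj{0}{i,j}$ and of lower shifts of $\obj{w_r}{}$, then strips the factors $w_r$ off one at a time by iterated cones of the maps $R(\cdot)/(w_r)\xrightarrow{w_{r+1}\cdots w_\ell}R(\cdot)/(w_r\cdots w_\ell)$, landing on a module of the form $R(\cdot)/(xy)$ or $R(\cdot)/(x^2)$, from which the skyscraper again follows by one last cone. In writing this out I would only flag the places where it differs from \cref{BGenerates}; the cone computations are identical.

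The one genuine thing to verify — and it is a counting issue rather than a new idea — is that these constructions exhaust $\Z/\left(\frac{pq}{d}\right)\times\Z/\left(\frac{d}{\ell}\right)$, which, unlike the quotients $\Z/\left(\frac{pq-1}{\ell}\right)$ and $\Z/\left(\frac{pq}{\ell}\right)$ appearing in the loop and chain cases, is not cyclic in general: in the Brieskorn--Pham case neither $\x$ nor $\y$ is determined by the other modulo $\vec{c}$. Concretely, one has to check that the $\ell$ distinct factors $w_1,\dots,w_\ell$ contribute precisely the extra $\Z/\left(\frac{d}{\ell}\right)$-worth of grading shifts needed, and this is exactly the point at which the standing hypothesis $\ell>1$ enters: for $\ell=1$ the objects $\obj{w_r}{}$ are zero and the mechanism above collapses, but that case is already \cite{FutakiUedaBP}. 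Once the residue count is confirmed, the rest of the argument transfers from \cref{BGenerates} verbatim, which is why it is reasonable to present it tersely.
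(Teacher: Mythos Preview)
Your proposal is correct and follows the paper's approach; in fact the paper gives no proof at all beyond the sentence preceding the proposition (``the proof follows the same strategy of Propositions~\ref{BGenerates} and~\ref{ChainBGenerates}'') and a \qed\ box, so your outline is already more detailed than what appears there.

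One small imprecision worth flagging: in your final paragraph you say the $\ell$ factors $w_1,\dots,w_\ell$ ``contribute precisely the extra $\Z/(d/\ell)$-worth of grading shifts needed''. This is not quite the mechanism. The objects $\obj{w_r}{}$ all sit at the \emph{same} $L$-degree, so they do not themselves distinguish classes in the torsion factor of $L/\vec{c}\Z$. Their role, exactly as in \cref{BGenerates} and \cref{ChainBGenerates}, is to enable the iterated cone constructions (stripping off one $w_r$ at a time from $\w=w_1\cdots w_\ell$) that produce the handful of residues not reached directly from the $\obj{0}{i,j}$. The non-cyclicity of $L/\vec{c}\Z$ that you correctly flag is a genuine bookkeeping difference, but its resolution is that the set of shifts $\{i\x+j\y\}$ over the specified ranges, augmented by the residues produced via these $w_r$-cone constructions, exhausts $L/\vec{c}\Z$ by a direct count --- not that the $w_r$ separate the torsion classes. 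Your observation that the hypothesis $\ell>1$ enters here is correct: for $\ell=1$ the $\obj{w_r}{}$ vanish and one falls back on \cite{FutakiUedaBP}.
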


From this, we deduce the \cref{TiltingCor} in the undeformed Brieskorn--Pham case in the same way as in the loop and chain cases. We state the corollary in the case of $\ell>1$; the case of $\ell=1$ is well known, and goes back to at least \cite[Theorem 1.2]{FutakiUedaBP}, \cite[Theorem 6]{FutakiUedaProceedings}.
\begin{cor}[\cref{TiltingCor}, Undeformed Brieskorn--Pham polynomial case] For $\ell>1$, the object 
	\begin{align*}
	\mathcal{E}:=\left(\bigoplus_{\substack{i=(\ell-1)\frac{p}{\ell},\dots, p-1\\
			j=2,\dots, q-1}}\obj{0}{i,j}\right)\oplus\left(\bigoplus_{i=(\ell-1)\frac{p}{\ell}+\frac{q}{\ell},\dots, p-1}\obj{0}{i,j}\right)\oplus\left(\bigoplus_{\substack{r=1,\dots, \ell}}\obj{w_r}{}[3]\right)
	\end{align*}
	is a tilting object for $\mathrm{mf}(\C^2,\Gamma,\w)$.
\end{cor}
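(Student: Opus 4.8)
The plan is to deduce this exactly as in the loop and chain cases — that is, as in \cite[Theorem 2.19]{HabermannSmith} — by combining the generation statement \cref{BPBGenerates} with the computation of the endomorphism algebra in \cref{BPEndAlgebra}. First I would record that the indecomposable summands of $\mathcal{E}$ are precisely the objects of the category $\mathcal{B}$ from \cref{BPEndAlgebra}, once the representatives of the $\obj{0}{i,j}$ have been fixed as in the preceding discussion. Since $\mu(\wt)=(p-1)(q-1)$ for $\wt=\xt^p+\yt^q$, a count of these summands (namely $\frac{pq-p-q}{\ell}$ objects $\obj{0}{i,j}$ together with the $\ell$ objects $\obj{w_r}{}[3]$) gives the length $\frac{\mu(\wt)-1}{\ell}+\ell$ asserted in \cref{TiltingCor}.

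Next I would recall that an object $T$ in a pre-triangulated category $\mathcal{T}$ is tilting precisely when it split-generates $\mathcal{T}$ and $\Hom^k_{\mathcal{T}}(T,T)=0$ for all $k\neq 0$, in which case $\mathcal{T}\simeq\perf(\End_{\mathcal{T}}(T))$. The generation half is immediate from \cref{BPBGenerates}: the summands of $\mathcal{E}$ are the objects of $\mathcal{B}$, and $\Tw\mathcal{B}\to\mathrm{mf}(\C^2,\Gamma,\w)$ is a quasi-equivalence, so $\mathcal{E}$ split-generates. For the Ext-vanishing half I would observe that $\End^\bullet_{\mathrm{mf}}(\mathcal{E})=\bigoplus_{V,W}\Hom^\bullet_{\mathrm{mf}}(V,W)$, the sum ranging over the summands $V,W$ of $\mathcal{E}$, is the total endomorphism algebra of the objects of $\mathcal{B}$; by \cref{BPEndAlgebra} this is the path algebra of the quiver-with-relations of Figure \ref{figBPQuiver} and is concentrated in cohomological degree zero, so $\Hom^k(\mathcal{E},\mathcal{E})=0$ for $k\neq 0$. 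Finally, since $\mathcal{B}$ is intrinsically formal we have $\Tw\mathcal{B}\simeq\perf(\End(\mathcal{E}))$, and composing with the quasi-equivalence of \cref{BPBGenerates} identifies $\mathrm{mf}(\C^2,\Gamma,\w)\simeq\perf(\End(\mathcal{E}))$; hence $\mathcal{E}$ is a tilting object.

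The only step requiring genuine attention is the bookkeeping in the first paragraph: matching the summands of $\mathcal{E}$ with the vertices of Figure \ref{figBPQuiver} after a choice of representatives for the $\obj{0}{i,j}$, and verifying the count, in analogy with \cref{ChoicesRMK,ChoicesRMK2} in the loop case. I do not expect a real obstacle here — the Brieskorn--Pham case is the most degenerate of the three and is a direct simplification of the loop argument, with \cref{BPEndAlgebra,BPBGenerates} carrying all of the content.
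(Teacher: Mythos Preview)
Your proposal is correct and follows exactly the approach the paper takes: the paper simply states that the Brieskorn--Pham case is deduced ``in the same way as in the loop and chain cases'', which in turn refer back to \cite[Theorem 2.19]{HabermannSmith}, combining \cref{BPBGenerates} and \cref{BPEndAlgebra} precisely as you describe. Your write-up in fact makes explicit the two halves (generation and Ext-vanishing) and the length count that the paper leaves implicit.
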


\subsection{The A-model}\label{BPAmodel}
The computation of the A-model for Brieskorn--Pham polynomials is, by this point, routine. The argument follows the previous two cases, and so we only summarise the results here. For a Brieskorn--Pham polynomial $\wt=\xt^p+\yt^q$, we have $d=\gcd(p,q)$, and we again Morsify as $\wt_\varepsilon=\xt^p+\yt^q-\varepsilon\xt\yt$. This descends to $\C^2/\mu_{\ell}$ as $\widecheck{\overline{\w}}=u^\frac{p}{\ell}+v^\frac{q}{\ell}-\varepsilon w$, and pulls back to the chart $\widetilde{X}_i$ as 
\begin{align*}
{\wttilde}_i(\lambda_i,\mu_i)=\lambda_i^{\frac{pi}{\ell}}\mu_i^{\frac{(i-1)p}{\ell}}+\lambda_i^{\frac{(\ell-i)q}{\ell}}\mu_i^{\frac{(\ell+1-i)q}{\ell}}-\varepsilon\lambda_i\mu_i. 
\end{align*}
The critical values fall into the groups
\begin{enumerate}[(i)]
	\item $\mu_i=\lambda_i=0$ for $\begin{cases}
		i=1,\dots, \ell\ \text{if } q>\ell,\\
		i=1,\dots, \ell-1\ \text{if } p>q=\ell,\\
		i=2,\dots, \ell-1\ \text{if } p=q=\ell,\\
	\end{cases}$
	\item[(i)'] $\mu_\ell=0,\ \lambda_\ell=\frac{1}{\varepsilon}$ if $p>q=\ell$
	\item[(i)''] $\mu_\ell=0,\ \lambda_\ell=\frac{1}{\varepsilon}$ and $\lambda_1=0,\ \mu_1=\frac{1}{\varepsilon}$ if $p=q=\ell$
	\item $\mu_1^{q-1}=\frac{\varepsilon}{q}\lambda_1^{1-\frac{(\ell-1)q}{\ell}}$, $\lambda_1^{\frac{p}{\ell}-1}=\frac{\varepsilon\mu_1}{p}$.
\end{enumerate}
\begin{rmk}
Analogously to the chain case, the Brieskorn--Pham cases with $\ell=q$ have critical points at the origin in fewer than $\ell$ charts. Indeed, one can check that the equivariant superpotential of $\check{x}^{n\ell}+\check{y}^\ell$ matches that of $\check{x}^{n(\ell-1)}+\check{x}\check{y}^{\ell}$ on charts where both superpotentials have critical points at the origin; however, as in the chain case, the resulting Fukaya--Seidel categories are not equivalent unless $\ell=2$ or $\ell=3,n=1$ due to the difference in topology of the smooth fibre. In the case of $\ell=q=3$ and $n=1$, this then further reduces to the maximally graded polynomial $\check{x}^2\check{y}+\check{x}\check{y}^2$. 
\end{rmk}
Analogously to chain polynomials with $\ell=p$, the analysis of the cases where critical points of type (i)' or (i)'' exist is only superficially different to the cases where $q>\ell$. We therefore label the vanishing cycles corresponding to the critical point of type (i)' as $\vc{\lambda_\ell\mu_\ell}{}$, or the critical points of type (ii)' as $\vc{\lambda_\ell\mu_\ell}{}$ and $\vc{\lambda_1\mu_1}{}$, respectively.\\
The critical value corresponding to critical points in the first three groups is zero, whist the $\frac{(p-1)(q-1)-1}{\ell}$ critical points in the second group have critical value
\begin{align*}
\frac{-\varepsilon\mu_1\lambda_1((p-1)(q-1)-1)}{pq}.
\end{align*}

As  before, we denote $(\lambda_{1,\text{crit}}^+, \mu_{1,\text{crit}}^+)$ the unique positive real critical point whose critical value is $c_\text{crit}$. The other critical points are then attained by symmetry considerations. Namely, for 
\begin{align}\label{BPSymmetrySet}
(m,n)\in(\{0,\dots, p-2\}\times\{0,\dots, q-2\})\setminus\{(p-2,q-2)\}, 
\end{align}
we have that the other critical points of type (iv) are given in the chart $\widetilde{X}_1$ by
\begin{align}\label{BPAction}
(\alpha^{n+m(q-1)}\lambda_{1,\text{crit}}^+, \alpha^{n(\frac{p}{\ell}-1)+m-\frac{m(\ell-1)q}{\ell}}\mu_{1,\text{crit}}^+),
\end{align}
where $\alpha=e^{\frac{2\pi i\ell}{pq-p-q}}$. Of course, as in the other cases, there is redundancy in considering the full set in \eqref{BPSymmetrySet}, and so we restrict to $m=0,\dots,\frac{p}{\ell}-2$ and $n=0,\dots,q-2$, as well as $m=\frac{p}{\ell}-1$, $n=0,\dots, q-2-\frac{q}{\ell}$.\\

\begin{figure}[H]
	\centering
	\scalebox{.5}{
		\includegraphics[width=0.9\linewidth]{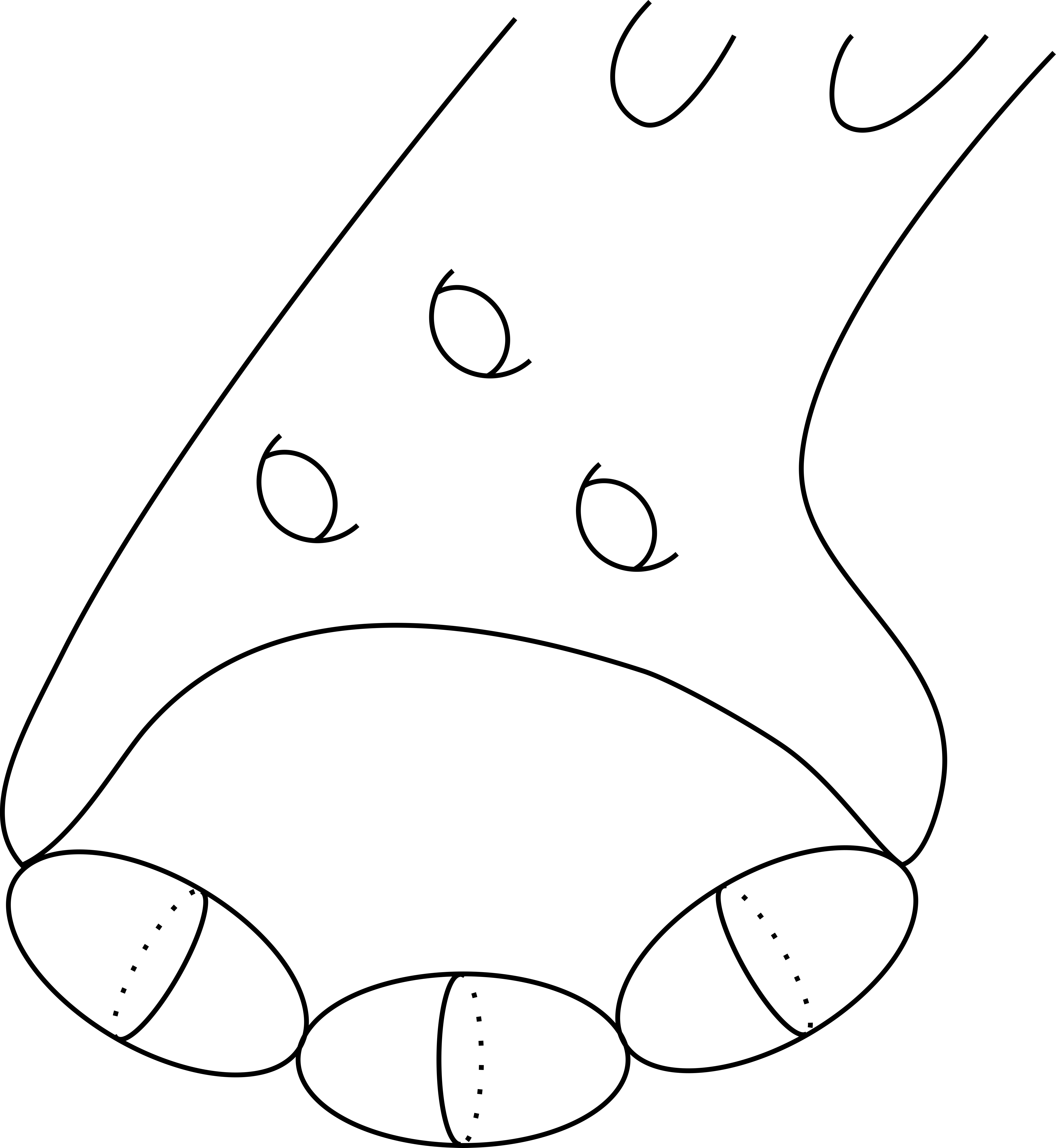}}
	\caption{A sketch of the fibre of $\wttilde_\varepsilon$ above the origin. }
	\label{fig:bpsingmf}
\end{figure}

As computed in \cite[6.3]{HabStackyCurves}, the smooth fibre is a $\gcd(p,\frac{p+q}{\ell})$-punctured curve of genus 
\begin{align*}
g(\Sigma)=\frac{1}{2\ell}(2\ell-1+(p-1)(q-1)-\gcd(\ell q,p+q)).
\end{align*}
We remove discs around the neck regions of $\Sigma$ corresponding to the critical points whose critical value is $0$, and call the resulting surface $\Sigma'$, as before. We then trivialise the fibration over a disc, identifying each fibre with $\Sigma'$. We then define the preliminary vanishing paths and cycles $\vc{\lambda_i\mu_i}{}$ and $\vcpr{0}{m,n}$ as in the loop and chain cases, taking 
\begin{align*}
\theta_{m,n}=\frac{2\pi}{\ell}\frac{np+mq}{pq-p-q}.
\end{align*}
As usual, the vanishing cycles $\vcpr{0}{0,0}$ corresponds to $(\lambda_{1,\text{crit}}^+,\mu_{1,\text{crit}}^+)$, and $\vcpr{0}{m,n}$ are obtained from this in $\Sigma'$ by \eqref{BPAction}. The full description in $\Sigma$ is given by local parallel transport considerations on the necks which were removed from $\Sigma$ to give $\Sigma'$. To yield bona fide vanishing paths, we perturb the fibration slightly and introduce long fingers which go around the critical points when $\theta_{m,n}>2\pi$ in the familiar way. As in the maximally graded case, this in fact now yields a set of transversely intersecting vanishing cycles; there is no need to isotope. \\

Analogously to the previous sections, we have $\arg \lambda_1=\arg\lambda_2=\dots=\arg\lambda_\ell$, and this argument interpolates between
\begin{align*}
-2\pi\ell\left(\frac{n(p-1)+m}{pq-p-q}\right)\quad\text{and}\quad 2\pi\ell\left(\frac{m(q-1)+n}{pq-p-q}\right).
\end{align*}
as the modulus of $\lambda_i$ increases, increasing an equal amount in each neck region. With this, it is then checked in the same way as in the previous two cases that the corresponding category $\mathcal{A}_{\widecheck{\overline{\Gamma}}}$ is presented as the quiver algebra of the quiver with relations given in \cref{figBPQuiver}, where we use the identification 
	\begin{equation}
\label{BPObjectMatch}
\begin{aligned}
\vc{0}{m,n} &\leftrightarrow \obj{0}{i,j}
\\ \vc{\lambda_k\mu_k}{} &\leftrightarrow \obj{w_r}{}[3]
\end{aligned}
\quad \text{with} \quad
\begin{aligned}
i+m&=p-1
\\ j+n&=q-1\\
k+r&=\ell.
\end{aligned}
\end{equation}
Namely, the only compositions are the ones claimed, and the grading of the manifold can again be taken such that all Floer complexes are graded in degree zero. \\

With all this, we arrive at the main theorem in the Brieskorn--Pham case. We state it for the non-maximally graded case, as the maximally graded case is established as a special case of \cite{FutakiUedaBP}. 

\begin{thm}[\cref{mainTheorem}, non-maximally graded undeformed Brieskorn--Pham polynomial case]
	For a Brieskorn--Pham polynomial $\w$ with grading group of index $\ell>1$,  the $\Z$-graded $A_\infty$-category $\mathcal{A}_{\widecheck{\overline{\Gamma}}}$ is described under the identification \eqref{BPObjectMatch}  by the quiver with relations in   \cref{figBPQuiver}, and is formal. In particular, by \cref{BPEndAlgebra}, it is quasi-equivalent to $\mathcal{B}$, and hence there is an induced quasi-equivalence
	\[
	\mathrm{mf}(\C^2, \Gamma, \w) \simeq \mathcal{FS}(\wttilde).
	\]
\end{thm}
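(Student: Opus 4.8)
The plan is to run, in the Brieskorn--Pham setting, the same blueprint that worked in the loop and chain cases, which in turn adapts the maximally graded Brieskorn--Pham argument of \cite{FutakiUedaBP} and the resonant-Morsification technique of \cite{HabermannSmith}. First I would fix the reference fibre $\Sigma$ and use the explicit resonant Morsification $\wt_\varepsilon=\xt^p+\yt^q-\varepsilon\xt\yt$, which is $\widecheck{\overline{\Gamma}}$-invariant, to descend to $\widecheck{\overline{\w}}=u^{p/\ell}+v^{q/\ell}-\varepsilon w$ on $X$ and pull back to $\wttilde_\varepsilon:\widetilde{X}\to\C$, whose restrictions to the charts $\widetilde{X}_i$ are the $\wttilde_i$ displayed above and are tame at infinity. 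I would then read off the four families of critical points, together with their orbits under the $\mu_{p-1}\times\mu_{q-1}$-action given by \eqref{BPAction}, and restrict to the stated fundamental domain as in \eqref{BPSymmetrySet}. Since the hypotheses of \cref{CrepResolutionSection} (exactness of $\Sigma$, vanishing of $\pi_2(\Sigma)$ and $\pi_2(\Sigma,\vc{}{})$) hold here, the category $\mathcal{FS}(\wttilde)$ is by definition $\Tw\mathcal{A}_{\widecheck{\overline{\Gamma}}}$ for $\mathcal{A}_{\widecheck{\overline{\Gamma}}}$ the directed category on a distinguished basis of vanishing cycles, so it suffices to identify $\mathcal{A}_{\widecheck{\overline{\Gamma}}}$ with the quiver algebra of Figure \ref{figBPQuiver}.

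Next I would describe the vanishing cycles. As before, the real cycle $\vcpr{0}{0,0}$ associated to $(\lambda_{1,\text{crit}}^+,\mu_{1,\text{crit}}^+)$ arises by parallel transporting the real level sets $\Lambda_c$; in $\Sigma'$ (the fibre with the neck discs over the zero-value critical points removed) it is given by explicit straight segments in the chart coordinates together with a lift of $\{u+v=\varepsilon\}$, and in each neck it is a hyperbola. The remaining $\vcpr{0}{m,n}$ are obtained as the images under the global symplectomorphisms $f_{m,n}$, followed by parallel transport along the circular arc of angle $\theta_{m,n}=\frac{2\pi}{\ell}\frac{np+mq}{pq-p-q}$; the local parallel-transport computation near each $\vc{\lambda_i\mu_i}{}$-neck shows the common $\lambda_i$-argument interpolates monotonically between $-2\pi\ell\frac{n(p-1)+m}{pq-p-q}$ and $2\pi\ell\frac{m(q-1)+n}{pq-p-q}$. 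Crucially, as in the maximally graded Brieskorn--Pham case, the radial segments already produce transversely intersecting cycles, so no Hamiltonian isotopy is needed; I only perturb the fibration to separate paths to the origin and to those with a common critical value, and add the usual fingers around critical points when $\theta_{m,n}>2\pi$, which (as in \cite[Lemma 3.3]{HabermannSmith}) does not change the cycles. This yields the distinguished basis, ordered by decreasing $\theta_{m,n}$, as in \cref{propAfromPrelim}.

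Then I would compute the $A_\infty$-structure. Under \eqref{BPObjectMatch}, counting intersection points gives the additive match with $\mathcal{B}$. The grading is the one restricted from $\widetilde{X}$: by crepancy $c_1(\widetilde{X})=0$, and since $\widetilde{X}$ is diffeomorphic to the Milnor fibre of $x^2+y^2+z^\ell$ its first cohomology vanishes, so the trivialisation of $K_{\widetilde{X}}^{\otimes2}$ is unique up to homotopy; concretely it is realised by the line field that is approximately horizontal on the annuli and parallel to the attaching strips, as in \cite[Section 7.1.3]{HabStackyCurves}. One checks, exactly as in the loop and chain cases, that with the chosen ordering every graded lift $\alpha_{L_i}^{\#}$ can be taken in $(0,1/2)$ with $\alpha^{\#}_{L_i}>\alpha^{\#}_{L_j}$ for $i>j$, so all intersection points lie in degree $0$; hence the Floer differential vanishes, $\mathcal{A}_{\widecheck{\overline{\Gamma}}}$ is concentrated in degree $0$, and therefore formal. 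Counting holomorphic polygons reduces, by \cite[Section (13b)]{SeidelBook} and the combinatoriality guaranteed by \cref{CrepResolutionSection}, to counting immersed triangular regions; this produces precisely the arrows of Figure \ref{figBPQuiver} and the relations $xy=yx$ and $c_r(x^{\frac{p}{\ell}}-e^{\frac{\pi i}{\ell}}\eta^r y^{\frac{q}{\ell}})=0$, the latter from unequal-area triangles as in Figure \ref{fig:milnorfiberexample}. With $\mathcal{A}_{\widecheck{\overline{\Gamma}}}$ so identified, \cref{BPEndAlgebra} identifies $\mathcal{B}$ with the same quiver algebra and establishes its formality, while \cref{BPBGenerates} gives $\Tw\mathcal{B}\simeq\mf(\C^2,\Gamma,\w)$; combining these yields $\mf(\C^2,\Gamma,\w)\simeq\Tw\mathcal{A}_{\widecheck{\overline{\Gamma}}}=\mathcal{FS}(\wttilde)$.

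The main obstacle I anticipate is bookkeeping rather than new geometry: on one hand, the degenerate subcases $p>q=\ell$ and $p=q=\ell$ change the zero fibre (the component $\wttilde_i\lambda_i^{-1}\mu_i^{-1}$ for $i\neq1,\ell$ ceases to meet the exceptional locus at the origin of the outer charts, cf.\ Figure \ref{fig:bpsingmf}), and one must verify this is topologically irrelevant so the critical points of types (ii) and (iii) can be handled together with type (i); on the other hand, and more substantively, one must ensure the holomorphic-polygon count in the non-exact total space genuinely collapses to the combinatorial one, i.e.\ that no disc or sphere bubbling occurs, which is exactly where the hypotheses of \cref{CrepResolutionSection} have to be invoked with care.
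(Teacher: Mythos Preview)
Your proposal is correct and follows essentially the same approach as the paper. The paper's own treatment of the Brieskorn--Pham case is deliberately terse (``by this point, routine''), simply recording the Morsification, the critical-point analysis, the $\theta_{m,n}$, the argument-interpolation on the necks, the observation that no Hamiltonian isotopy is needed, and the line-field grading, before invoking \cref{BPEndAlgebra} and \cref{BPBGenerates}; you have spelled out each of these steps in more detail but with the same content and in the same order, including the handling of the degenerate subcases $q=\ell$ and the appeal to the hypotheses of \cref{CrepResolutionSection} for combinatoriality.
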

\section{Mirror symmetry and deformations.}\label{DeformationsSection}
In this section, we observe that considering Berglund--H\"ubsch--Henningson mirror symmetry for $\ell>1$ gives rise to a genuinely new phenomenon which is not present in the maximally graded case. Namely, there are clear deformations of the categories appearing on both sides of the argument. In this section, we compute and match these deformations, completing the proof of \cref{mainTheorem}. \\

We begin with an observation about Hochschild cohomology for categories of matrix factorisations. 
\begin{lem}\label{BDeformations}
Let $\w$ be an invertible polynomial in two variables with $\Gamma\subseteq \Gamma_\w$ an admissible group of symmetries of index $\ell$. Then,
\begin{align*}
\mathrm{HH}^2(\C^2,\Gamma,\w)&=\begin{cases}
\mathrm{span}_\C\{g_1,\dots, g_{\ell-3}\}\quad \text{if } \w=x^{p}+y^p\text{ for }\ell=p\\
\mathrm{span}_\C\{g_1,\dots, g_{\ell-2}\}\quad \text{if } \w=x^p y+y^{np+1}\text{ or } \w=x^{np}+y^p\text{ for } n>1 \text{ and } \ell=p\\
\mathrm{span}_\C\{g_1,\dots, g_{\ell-1}\}\quad\text{otherwise} 
\end{cases}
\end{align*}
where the $g_i$ are all monomials in $\mathrm{Jac}\ \w$ which are of the same degree as $\w$.
Moreover, 
\begin{align*}
\mathrm{HH}^3(\C^2,\Gamma,\w)&=0,
\end{align*}
and so every infinitesimal deformation of $\mf(\C^2,\Gamma,\w)$ extends to infinite order. In particular, every deformation is realised by the category of $\Gamma$-equivariant matrix factorisations of 
\begin{align}\label{DeformedBModel}
\w_{\vec{\varepsilon}}=\w+\sum_{i=1}^{N}\varepsilon_i g_i,
\end{align}
for $\vec{\varepsilon}=(\varepsilon_1,\dots,\varepsilon_{N})\in\C^{N}$ for $N=\dim_{\C}\mathrm{HH}^2(\C^2,\Gamma,\w)$.
\end{lem}
\begin{proof}
The computations of the Hochschild cohomology vector spaces is a straightforward application of \cite[Theorem 1.2]{BallardFaveroKatzarkov} (see, for example, \cite[Section 3]{LekiliUeda} or \cite[Section 4]{HabMilnorFibreHMS} for closely related calculations). The Jacobians were computed in \cite[Section 2]{HabMilnorFibreHMS}. In particular, the second Hochschild cohomology is, in this case, a count of the number of $\Gamma$-invariant elements of $\mathrm{Jac}\ \w$ in our case, and $\mathrm{HH}^3$ can similarly be seen to vanish since $\w$ has an isolated singularity. 
\end{proof}
In the following, we shall only consider polynomials $\w_{\vec{\varepsilon}}$ which continue to have an isolated hypersurface singularity at the origin. More precisely, this excludes polynomials $\w_{\vec{\varepsilon}}$ which do not have distinct branches at the origin. It will become momentarily clear that the space of polynomials such that $\w_{\vec{\varepsilon}}$ only has an isolated hypersurface singularity at the origin is the complement of $(\ell-1)!$ generic hyperplanes in $\C^{N}$.\\

In this section, we study the loop case for concreteness; however, all arguments go through for the chain and Brieskorn--Pham cases with only cosmetic alteration. To fix notation, we let $\mathcal{E}$ be the tilting object of \cref{LoopTiltingCor} and $A=\mathrm{End}^*(\mathcal{E})(=\mathrm{End}^0(\mathcal{E}))$ the quiver algebra of \cref{figLoopQuiver}. \\
To connect this with the classical deformation theory of an algebra, we observe that the tilting object for the category of matrix factorisations of $\w_{\vec{\varepsilon}}$ is an infinite order deformation of $A$ in the usual sense. The first thing to observe is that $\w_{\vec{\varepsilon}}$ can still be $\Gamma-$equivariantly factored as 
\begin{align}\label{LoopBDeformationFactored}
xy\prod_{r=1}^\ell(x^{\frac{p-1}{\ell}}-e^{\frac{\pi i}{\ell}}\eta^r\alpha_ry^{\frac{q-1}{\ell}}), 
\end{align} 
where $\eta^\ell=1$ is the fixed root of unity as in the computations of \cref{LoopBModel}, and $\prod_{r=1}^\ell\alpha_r=1$. This last condition ensures that the expanding \eqref{LoopBDeformationFactored} yields a deformation of $\w$ of the form \eqref{DeformedBModel}. It is here that it becomes clear that the space of deformations which have an isolated singularity is the complement of $(\ell-1)!$ generic hyperplanes in $\C^{\ell-1}$. In particular, we pairwise exclude the values $\alpha_{r_1}$ and $\alpha_{r_2}$ such that $\eta^{r_1}\alpha_{r_1}=\eta^{r_2}\alpha_{r_2}$. \\
With this, we then run through the argument of \cref{LoopBModel} for $R_{\vec{\varepsilon}}=S/(\w_{\vec{\varepsilon}})$. Whilst, for example, the maps \eqref{EvenDifferential} are no longer in as clean a form as in the undeformed case, the actual computations, with the exception of those in \eqref{KijKwrHoms}, do not change. \\
The key difference between the computation of matrix factorisations for $\w_{\vec{\varepsilon}}$ compared with those of $\w$ is contained in \cref{KijKwrHoms} and its consequences for the relations of the corresponding quiver algebra. In the proof of this lemma (in the undeformed case), we showed that the kernel was one dimensional and identified a basis vector for this space. The only non-trivial part was to show that the image of the last row of $\diff_0^T$ included $w_r$ as a factor\footnote{Recall that our notation is that $\w=xyw_1\dots w_\ell$.}, and so vanished in $R/(w_r)$. Now, for $R_{\vec{\varepsilon}}$, the kernel is still one dimensional, but the basis element changes. In particular, the last row of the corresponding differential must contain $(x^{\frac{p-1}{\ell}}-e^{\frac{\pi i}{\ell}}\eta^r\alpha_ry^{\frac{q-1}{\ell}})$ as a factor. Consequently, the relation corresponding to $(iii)$ in \cref{LoopEndAlgebra} then becomes
\begin{align*}
c_r(x^{\frac{p-1}{\ell}}-e^{\frac{\pi i}{\ell}}\eta^r\alpha_ry^{\frac{q-1}{\ell}})=0.
\end{align*}

Whilst the above shows how to recover a deformation of $A$ in the classical sense from \cref{BDeformations}, and, strictly speaking, this is all that we require, we still find it valuable to explain the reverse implication, building a deformation in \cref{BDeformations} from Hochschild cocycles\footnote{We thank the anonymous referee for this suggestion.}. In particular, we consider $A_{t_r}=A\otimes_\C\C[t_{r}]/(t_r^2)$, and infinitesimally deform the product as
\begin{align*}
u\cdot_{t_r} v=u\cdot v+t_r f(u\otimes v), 
\end{align*}
where $f\in\mathrm{Hom}(A\otimes A,A)=\mathrm{CC}^2(A)$ is a Hochschild cocycle representative of a Hochschild cohomology class $[f]\in\mathrm{HH}^2(A)$ and $-\cdot-$ is the undeformed multiplication on $A$. In order to deform the relation
\begin{align*}
c_r\cdot (x^{\frac{p-1}{\ell}}-e^{\frac{\pi i}{\ell}}\eta^ry^{\frac{q-1}{\ell}})=0,
\end{align*}
we take $f(c_r\otimes(x^{\frac{p-1}{\ell}}-e^{\frac{\pi i}{\ell}}\eta^ry^{\frac{q-1}{\ell}}))=-e^{\frac{\pi i}{\ell}}\eta^rc_ry^{\frac{q-1}{\ell}}$ (and zero on the other generators of the algebra). One can see that this is a cochain by the definition of multiplication on $A$. In particular, there are three possible non-vanishing configurations for $\delta f(u\otimes v\otimes w)$. These are when two of $u,v$ or $w$ are $c_r$ and $(x^{\frac{p-1}{\ell}}-e^{\frac{\pi i}{\ell}}\eta^ry^{\frac{q-1}{\ell}})$ (appearing in that order) and the other is the appropriate idempotent. It is then straightforward to check directly that $\delta f=0$ in these cases as well. The infinitesimally deformed relation becomes 
\begin{align*}
c_r(x^{\frac{p-1}{\ell}}-e^{\frac{\pi i}{\ell}}\eta^r(1+t_r)y^{\frac{q-1}{\ell}})=0.
\end{align*}
 Since $\mathrm{HH}^3(A)=0$, we can extend this deformation to higher order defining a product on $A\otimes_\C\C[t_r]/(t_r^{n+1})$
\begin{align*}
u\cdot_\varepsilon v=u\cdot v+t_r f^{(1)}(u\otimes v)+\dots+ t_r^n f^{(n)}(u\otimes v),
\end{align*}
where the $f^{(k)}\in\Hom(A\otimes A,A)$ are similarly (representatives of) cocycles. Here, we set $f^{(k)}(c_r\otimes(x^{\frac{p-1}{\ell}}-e^{\frac{\pi i}{\ell}}\eta^ry^{\frac{q-1}{\ell}}))=-\frac{1}{k!}e^{\frac{\pi i}{\ell}}\eta^rc_ry^{\frac{q-1}{\ell}}$, and so, by extending our deformation to infinite order, our deformed relation becomes 
\begin{align*}
c_r(x^{\frac{p-1}{\ell}}-e^{\frac{\pi i}{\ell}}\eta^r\big(\sum_{k\geq 0} \frac{t_r^k}{k!}\big)y^{\frac{q-1}{\ell}})=0.
\end{align*}
The sum $\sum_{k=0}^\infty \frac{t_r^k}{k!}$ converges to $e^{t_r}$ for all values of $t_r$. Fixing $t_r$, this defines a deformed product on A. We can then repeat the above argument to deform the other relations corresponding to $(iii)$ in \cref{LoopEndAlgebra}. There are therefore $\ell$ deformation parameters; however, we note that two deformations are equivalent iff, for a fixed constant $k\in\C$, $t_r=t_r'+k$ for all $r=1,\dots \ell$. The equivalence is given by simply scaling the elements labelled by $y$ by a fixed amount which cancels the $e^k$ term in each relation corresponding to $(iii)$ in \cref{LoopEndAlgebra}. Note that the other relations are unaffected by this scaling. In particular, the deformation is trivial iff  $t_1\equiv\dots\equiv t_\ell\bmod 2\pi i$.  
We therefore explicitly see the $\ell-1$ deformation parameters of \cref{BDeformations}. Finally, since scaling each $e^{t_r}$ by a fixed constant is a trivial deformation, we can choose a distinguished representative of each deformation class by setting $\alpha_r=e^{t_r}\beta$ for any fixed constant $\beta$ such that $\beta^{-\ell}=\prod_{k=1}^\ell e^{t_k}$ so that $\prod_{k=1}^\ell \alpha_k=1$. Expanding out $xy\prod_{r=1}^\ell(x^{\frac{p-1}{\ell}}-e^{\frac{\pi i}{\ell}}\eta^r\alpha_ry^{\frac{q-1}{\ell}})$ and matching coefficients of the $g_i$ gives the deformation $\w_{\vec{\varepsilon}}$ in \cref{BDeformations}. \\

With deformations on the B--side understood, we must demonstrate that we can realise these same deformations symplectically. In what follows, we will recall some facts about B--fields and deformations more generally, before specialising to the case at hand. \\
Let $\theta_1,\dots,\theta_N$ be a collection of vanishing thimbles for a Lefschetz fibration $M\rightarrow \C$, in particular such as in \cref{LoopAModel}, \cref{ChainAmodel} or \cref{BPAmodel}. We then let $\Theta=\bigcup \theta_i$ be the union, and recall that computing the coefficients of the Floer products is equivalent to defining a homomorphism 
\begin{align}\label{WeighingHomomorphism}
H_2(M,\Theta;\Z)\rightarrow\C^*,
\end{align}
sending each contributing disc $u:\mathbb{D}^2\rightarrow M$ to an element of $\C^*$, the coefficient of the output of this disc. In principle, any such homomorphism would be permissible, but it is most geometrically meaningful to define it by considering the symplectic areas of the discs and sign contributions coming from the orientations of moduli spaces of such discs. Letting $\phi:H_2(M,\Theta;\Z)\rightarrow \C^*$ be this homomorphism, we will be looking for a homomorphism $\phi'$ such that $\phi+\phi'$ describes the deformed product. \\

Now, observe that, since $\C^*$ is an injective $\Z$-module, $\mathrm{Ext}_\Z^1(H_1(M,\Theta;\Z),\C^*)=0$, and
\begin{align}\label{HomCohomIsom}
H^2(M,\Theta;\C^*)\simeq\mathrm{Hom}_\Z(H_2(M,\Theta;\Z),\C^*)
\end{align} 
by the universal coefficient theorem. For an element which yields the undeformed Floer product, its image under the map
\begin{align*}
H^2(M,\Theta;\C^*)\rightarrow H^2(M;\C^*)
\end{align*}
coming from the long exact sequence for relative cohomology of the pair $(M,\Theta)$ is the identity. Again by the universal coefficient theorem, $H^2(M;\C^*)\simeq \mathrm{Hom}(H_2(M;\C),\C^*)$, where 
\begin{align*}
H_2(M;\C)&\rightarrow \C^*\\
A&\mapsto e^{\int_A B}
\end{align*}
for a fixed $B\in H^2(M;\C)$. In order to deform the Floer products, the idea is to pick such an element $B\in H^2(M;\C)$, consider its image in $H^2(M;\C^*)$ and then weight the Floer products by the homomorphism \eqref{WeighingHomomorphism} corresponding to (an element in) the preimage of this element. For this to make sense, we require\footnote{Different authors have different conventions, stemming from whether one considers the first Chern class of a line bundle to have a factor of $2\pi i$ or not.} that $B|_{\Theta}\in H^2(\Theta;2\pi i\Z)$, so that $B$ is in the kernel of the map
\begin{align}\label{LESMap1}
H^2(M;\C^*)\rightarrow H^2(\Theta;\C^*),
\end{align}
and it can be lifted to $H^2(M,\Theta;\C^*)$. Moreover, this guarantees the existence of a line bundle $\mathcal{L}_i\rightarrow \theta_i$ with connection $\nabla_i$ on each Lagrangian such that $c_1(\mathcal{L})=B|_{\theta_i}=F_{\nabla_i}$. Considering $u:\mathbb{D}^2\rightarrow M$ as an element of $H_2(M,\Theta;\Z)$, the lift of the map $(A\mapsto e^{\int_A B})\in H^2(M;\C^*)$ to $H^2(M,\Theta;\C^*)$ is given by 
\begin{align*}
u\mapsto \mathrm{Hol}(\partial u) e^{\int_{\mathbb{D}^2}u^*B},
\end{align*} 
where $\mathrm{Hol}(\partial u)$ is the holonomy around the boundary of the disc given by the parallel transport induced by $\nabla_i$ along each component, as well as fixed isomorphisms $\mathcal{L}_i|_p\simeq\mathcal{L}_j|_p$ for $p\in \theta_i\cap \theta_j$. It is in this way that, given an element $B\in H^2(M;\C)$ whose restriction to each Lagrangian is integral, one can deform the Floer products. Such a cohomology class is referred to as a \emph{B--field}, and was originally introduced in Floer theory by Fukaya in \cite{Fukaya} as a way of enlarging the class of objects in the Fukaya category. His construction dealt with unitary\footnote{Meaning that the cohomology class takes values in the Lie algebra of the unitary group.} B--fields $iB\in H^2(M;i\R)$, and was expanded upon by \cite{ChoNonDisplaceable} who considered non-unitary B--fields $B\in H^2(M;\C)$. It should be emphasised that we keep the symplectic form fixed, only changing how pseudoholomorphic curves are weighted, not which curves contribute. Deformations of Fukaya--Seidel categories by B--fields in the context of homological mirror symmetry have played an important role, for example in \cite{AOK, AOKDelPezzo}.  \\

We now return specifically to the case at hand, so $M=\widetilde{X}$, the crepant resolution of the $A_{\ell-1}$ singularity. We also exclude the transpose of the sporadic families in \cref{BDeformations} from the discussion. Working with a B--field and explicitly calculating the weights of the discs is the most geometric approach, although can be computational taxing. Instead, we opt to choose directly the homomorphism \eqref{WeighingHomomorphism} such that the relations amongst Floer products matches the deformations on the B--side and then define the B--field to be an element\footnote{This is a non-unique choice, stemming from the fact that the lifts of an element of $H^2(\widetilde{X};\C^*)$ is a torsor over $H^1(\Theta;\C^*)$, as well as the non-uniqueness of the complex logarithm in determining $B\in H^2(\widetilde{X};\C)$ from an element of $H^2(\widetilde{X};\C^*)$. By construction, however, the resulting deformations are the same.} which yields this deformation.\\
In order to define this homomorphism, we continue to send all triangles with boundary on exact Lagrangians to the identity (this ensures that the commutativity relation (i) of \cref{figLoopQuiver} continues to hold). Then, consider triangles whose boundary is on $\vc{0}{\frac{p-1}{\ell},0}\cup\vc{0}{0,0}\cup\vc{\lambda_r\mu_r}{}$ (of which there are two\footnote{We are defining this for the configuration of vanishing cycles which was used to compute $\mathcal{A}_{\widecheck{\overline{\Gamma}}}$. If one were to Hamiltonian isotope these Lagrangians so as to introduce new intersections, it would be necessary to alter this description.} -- one big and one small). For each $\vc{\lambda_r\mu_r}{}$, we send the big and small triangles to $e^{d_r}$ or $e^{c_r}$, respectively, for some $c_r,d_r\in \C$, such that 
\begin{align*}
e^{d_r-c_r}=\alpha_r,
\end{align*}
where $\alpha_r$ was the deformation parameter on the B--side. This can be done consistently since the $\vc{\lambda_r\mu_r}{}$ are pairwise disjoint and each triangle has at most one of these Lagrangians on its boundary. The remaining triangles which contribute to Floer products are then determined by virtue of the map being a homomorphism and the commutativity of the product between exact Lagrangians. From this, we define our B--field as explained above. By construction, the resulting Fukaya--Seidel category is presented as modules over the algebra in \cref{figLoopQuiver} with the third relations deformed by a factor of $\alpha_r$, as in the B--model. 

\begin{rmk}
The above argument might seem objectionable for its lack of geometric origin, but it is actually just cutting to the chase of what we want out of a B--field. It is also not so complicated to see which B--field a particular deformation corresponds to. For example, in \cref{fig:milnorfiberexample}, $e^{d_1}$ is the contribution from the B--field of the green (large) triangle and $e^{c_1}$ is the contribution from the light blue (small) triangle. There are similarly triangles with boundary on $\vc{0}{0,0}$, $\vc{0}{0,1}$ and $\vc{\lambda_2\mu_2}{}$ with B--field contributions $e^{d_2}$ and $e^{c_2}$. Let $Z\in H_2(\widetilde{X},\Theta;\Z)$ be the relative homology class given by the cylinder between $\vc{\lambda_1\mu_1}{}$ and $\vc{\lambda_2\mu_2}{}$. Then, since $Z=d_1-d_2-c_1+c_2$, we have
\begin{align}\label{CylinerMapping}
Z\mapsto e^{d_1-d_2-c_1+c_2}.
\end{align}
Since the two Lagrangian thimbles are symmetric about the zero section in $\widetilde{X}\simeq\mathcal{O}_{\P^1}(-2)$, the connection forms on $\nabla_1$ and $\nabla_2$ at worst differ by an exact one form. In particular, $B|_{\theta_1}$ is the same exact two form on $\theta_1$ as the two form $B|_{\theta_2}$ is on $\theta_2$. Consequently, the holonomies around the boundaries of these Lagrangians, $\int_{\theta_i}B|_{\theta_i}$, are equal. Since $\partial Z=\partial \theta_1-\partial \theta_2$, the holonomy contribution of of \eqref{CylinerMapping} is zero, meaning that
\begin{align*}
(Z\mapsto e^{d_1-d_2-c_1+c_2})\mapsto e^{d_1-d_2-c_1+c_2}\in H^2(\widetilde{X};\C^*)
\end{align*}
in the long exact sequence of the pair. Call $C$ the generator of the second cohomology of $\widetilde{X}$. Then, the B--field which corresponds to this deformation is\footnote{Recall that $C^2=-2$}
\begin{align}\label{BFieldExampleEqn}
B=-\frac{1}{2}(d_1-d_2-c_1+c_2)C.
\end{align}
More generally, one can do this for each generator of $H^2(\widetilde{X};\C)$, although it should be reiterated that this is not necessary for our argument. We point this out only to make connection with the geometric origins of B--fields. 
\end{rmk}
This process yields $\ell$ parameters which we freely deform; however, as on the B--side, there is a relation amongst them. Namely, scaling each of the contributions $d_r-c_r\mapsto \lambda +d_r-c_r$ for some $\lambda\in \C$ yields the same deformation. This can be seen algebraically in the same way as on the B--model, or one can observe that scaling the deformation parameters like this yields the same B--field. Symplectically, it is immediate in \eqref{BFieldExampleEqn} for $\widetilde{X}$ above, and the general case is achieved by iterating this construction.
\begin{rmk}\label{AlternativeDeformationsRMK}
It is worth remarking that we could have started with the undeformed case $\phi:H_2(\widetilde{X},\Theta;\Z)\rightarrow \C^*$ corresponding to counting discs with respect to the exact symplectic form and then finding a B--field such that $\phi+\phi'$ matches deformations of the B--model. We didn't do this for the following reason. In the B--model, the algebra and geometry is given to us by direct calculation, and is naturally the `undeformed' case. We believe that it is a less clear argument to then construct the mirror A--model as a deformation of something, where the B--model isn't. Particularly since we are looking to prove that mirror symmetry holds under deformations, it makes sense that the `base case' for this deformation statement corresponds to where both the A-- and B--models are undeformed. 
\end{rmk}
Putting this all together yields a proof of the the remaining statement of \cref{mainTheorem}.
\begin{thm}[\cref{mainTheorem}, deformed cases] Let $\w$ be an invertible curve singularity with admissible grading group $\Gamma\subseteq\Gamma_\w$, $\wt$ be the Berglund--H\"ubsch transpose with dual grading group $\widecheck{\overline{\Gamma}}$. Furthermore, let $\vec{\varepsilon}\in \C^{N}$ and $\w_{\vec{\varepsilon}}=\w+\sum_{i=1}^{N}\varepsilon_ig_i$ as in \cref{BDeformations} such that $\w_{\vec{\varepsilon}}$ has an isolated hypersurface singularity at the origin. Then, there exists a non-unitary B-field such that there is a quasi-equivalence of $\Z$-graded pre-triangulated $A_\infty$-categories over $\C$
\begin{align*}
\mathrm{mf}(\C^2, \Gamma,\w_{\vec{\varepsilon}})\simeq \mathcal{FS}(\wttilde;B).
\end{align*}
\end{thm}
\begin{rmk}
In the A--model corresponding to the cases where the rank of the second Hochschild cohomology is less than $\ell-1$, there are more exceptional spheres than deformation parameters. This does not affect the argument, however, since we can still add a B-field to realise any deformation on the B-side which we are considering.
\end{rmk}


\begin{thebibliography}{100} 
	
	\bibitem[AR87]{AusReit}
	M.~Auslander and I.~Reiten. 
	\newblock Almost split sequences for $\Z$-graded rings.
	\newblock {\em Singularities, representation of algebras, and vector bundles (Lambrecht 1985)}, 232-243, Lecture notes in Math., 1273, Springer, Berlin, 1987. 
	
	\bibitem[AKO06]{AOKDelPezzo}
	D. Auroux, L. Katzarkov and D. Orlov. 
	\newblock Mirror symmetry for Del Pezzo surfaces: Vanishing cycles and coherent sheaves.
	\newblock{Invent. Math.,} Vol. 166 (2006), 537–582.
	
	\bibitem[AKO08]{AOK} 
	D.~Auroux, L.~Katzarkov and D.~Orlov.
	\newblock Mirror symmetry for weighted projective planes and their noncommutative deformations. 
	\newblock {\em Ann. Math.,} Vol. 167 (2008), 867-943. 
	
	\bibitem[BFK14]{BallardFaveroKatzarkov} M. Ballard, D. Favero and L. Katzarkov. 
	\newblock A category of kernels for equivariant factorizations and its implications for Hodge theory.
	\newblock{\em Publications mathématiques de l'IH\'ES,} Vol. 120, 1–111 (2014).

	\bibitem[BH95]{BergHenn} 
	P.~Berglund and M.~Henningson.
	\newblock Landau--Ginzburg orbifolds, mirror symmetry and the elliptic genus.
	\newblock {\em Nucl. Phys. B.} Vol. 433 (1995), 311-332.
	
	\bibitem[BH92]{BerglundHuebsch} P.~Berglund and T.~ H\"ubsch.
	\newblock A generalized construction of mirror manifolds. 
	\newblock {\em Nucl. Phys. B.}, Vol. 393 (1993), 377 - 391. 
	
	
	\bibitem[Bri66]{Brieskorn} E. Brieskorn. 
	\newblock \"Uber die Aufl\"osung gewisser Singularit\"aten von holomorphen Abbildungen.
	\newblock {\em Math. Ann.} Vol. 166 (1966), 76–102.
	
	\bibitem[Bro88]{Broughton} 
	A.~Broughton. 
	\newblock Milnor numbers and the topology of hypersurfaces.
	\newblock {\em Invent. Math.,} Vol. 92 (1988), 217-241. 
	
	\bibitem[Buc86]{Buchweitz} R-O.~Buchweitz. 
	Maximal Cohen--Macaulay modules and Tate-cohomology over Gorenstein rings. 
	\newblock {\em Preprint} (1986). Available at: \url{https://tspace.library.utoronto.ca/handle/1807/16682}
	
	\bibitem[BIY20]{BIY} R-O.~Buchweitz, O.~Iyama and K.~Yamaura. 
	\newblock Tilting theory for Gorenstein rings in dimension one. 
	\newblock {\em Forum Math. Sigma,} Vol. 8 (2020), e37. 
	
	\bibitem[Cho08]{ChoNonDisplaceable} C.-H. Cho. 
	\newblock Non-displaceable Lagrangian submanifolds and Floer cohomology with non-unitary line bundle. 
	\newblock{ \em J. Geom. Physics.} Vol. 58 (2008), 1465–1476.
	
	\bibitem[CH17]{ChoHong} C.-H.~Cho and H.~Hong.
	\newblock Finite group actions on Lagrangian Floer theory. 
	\newblock {\em J. Symplectic Geom.,} Vol. 15 (2017), 307-420. 
	
	\bibitem[CCJ20]{CCJ} C.-H.~ Cho, D.~Choa and W.~Jeong.
	\newblock Fukaya category for Landau-Ginzburg orbifolds and Berglund-H\"ubsch conjecture for invertible curve singularities.
	\newblock {\em arXiv:2010.09198} (2020).
	
	\bibitem[CCJ24]{CCJ24} C.-H.~ Cho, D.~Choa and W.~Jeong.
	\newblock Berglund-H
	"bsch mirrors of invertible curve singularities via Floer theory.
	\newblock {\em arXiv:2410.14678} (2024).
	
	\bibitem[Dub98]{Dubrovin} B. Dubrovin. 
	\newblock Geometry and analytic theory of Frobenius manifolds. 
	\newblock{\em Proceedings of the International Congress of Mathematicians,} Vol. II (Berlin, 1998), Extra Vol. II, 315–326.
	
	\bibitem[Dyc11]{Dyckerhoff}
	T.~Dyckerhoff. 
	\newblock Compact generators in categories of matrix factorizations. 
	\newblock {\em Duke Math. J.,} Vol. 159 (2011), 223-274. 
	
	\bibitem[ET12]{EbelingTakahashi}
	W.~Ebeling and A.~Takahashi.
	\newblock Mirror symmetry between orbifold curves and cusp singularities with group action.
	\newblock {\em Int. Math. Res. Not. IMRN.}, Vol. 2013 (2012), 2240-2270.
	
	\bibitem[Eis80]{Eisenbud}
	D.~Eisenbud. 
	\newblock Homological algebra on a complete intersection, with an application to group representations. 
	\newblock {\em Trans. Am. Math. Soc.,} Vol. 260 (1980), 35-64. 
	
	\bibitem[FJR13]{FJR} 
	H.~Fan, T.~Jarvis and Y.~Ruan. 
	\newblock The Witten equation, mirror symmetry, and quantum singularity theory. 
	\newblock {\em Ann. Math.,} Vol. 178 (2013), 1-106.
	
	\bibitem[FKK20]{FKK} D.~Favero, D.~Kaplan and T.~Kelly. 
	\newblock Exceptional collections for mirrors of invertible polynomials. 
	\newblock {\em arXiv: 2001.06500v2} (2020).
	
	\bibitem[FHS23]{FHS} A. Francis, W. He and Y. Shen.
	\newblock Semisimple FJRW theory of polynomials in two variables. 
	\newblock{\em 	arXiv:2302.10129v2} (2023). 
	
	\bibitem[Fuk02]{Fukaya} K. Fukaya. 
	\newblock Floer homology for families—a progress report.
	\newblock{\em Contemp. Math.,} Vol. 309 (2002), Amer. Math. Soc,  33–68.
	
	\bibitem[FU09]{FutakiUedaProceedings}
	M.~Futaki and K.~Ueda.
	\newblock Homological mirror symmetry for Brieskorn-Pham singularities,
	\newblock {\em Proceedings of the 56th Japan Geometry Symposium}, 98-107, Saga University, 2009.

	\bibitem[FU11]{FutakiUedaBP}
	M.~Futaki and K.~Ueda.
	\newblock Homological mirror symmetry for Brieskorn-Pham singularities,
	\newblock {\em Selecta Math. (N.S.)}, Vol. 17 (2011),  435-452.
	
	\bibitem[FU13]{FutakiUedaD}
	M.~Futaki and K.~Ueda.
	\newblock A note on homological mirror symmetry for singularities of type D.
	\newblock {\em Math. Z.}, Vol. 273 (2013), 633-652.
	
	\bibitem[Hab21]{HabStackyCurves}
	M.~Habermann.
	\newblock Homological mirror symmetry for nodal stacky curves.
	\newblock {\em Math. Res. Lett. (to appear)}. arXiv:2101.12178.
	
	\bibitem[Hab22]{HabMilnorFibreHMS}
	M.~Habermann.
	\newblock Homological mirror symmetry for invertible polynomials in two variables. 
	\newblock {\em Quantum Top.,} Vol. 13 (2022), 207-253.
	
	\bibitem[HS20]{HabermannSmith}
	M.~Habermann and J.~Smith.
	\newblock Homological {B}erglund-{H}\"{u}bsch mirror symmetry for curve
	singularities.
	\newblock {\em J. Symp. Geom.,} Vol. 18 (2020), 1515-1574.
	
	\bibitem[HLSW22]{HLSW} W. He, S. Li, Y. Shen and R. Webb. 
	\newblock Landau-Ginzburg mirror symmetry conjecture.
	\newblock{\em J. Eur. Math. Soc.,} Vol. 24 (2022), no. 8, pp. 2915-2978.
	
	\bibitem[HO23]{HiranoOuchi} Y.~Hirano and G.~Ouchi. 
	\newblock Derived factorization categories of non-Thom-Sebastiani-type sums of potentials. 
	\newblock {\em Proc. London Math. Soc.,} Vol. 126 (2023), 1-75.
	
	\bibitem[IT13]{IyaTak} O.~Iyama and R.~Takahashi.
	\newblock Tilting and cluster tilting for quotient singularities. 
	\newblock {\em Math. Ann.,} Vol. 356 (2013), 1065-1105.
	
	\bibitem[Kra10]{Krawitz} M.~Krawitz.
	{\em FJRW rings and Landau--Ginzburg mirror symmetry.}
	\newblock PhD Dissertation, University of Michigan, Ann Arbor (2010). Available at: \url{https://www.proquest.com/docview/762374402?parentSessionId=euqFJiQhhDbpX21Uun7IZFjqpZOBS%2BEeBgrd5gaPrFw%3D.}
	
	\bibitem[Kra19]{Kravets} O.~Kravets. 
	\newblock Categories of singularities of invertible polynomials. 
	\newblock {\em arXiv:1911.09859} (2019).
	
	\bibitem[KS92]{KreuzerSkarke} M.~Kreuzer and H.~Skarke.
	\newblock On the classification of quasihomogeneous functions.
	\newblock {\em Comm. Math. Phys.,} Vol. 150 (1992), 137-147. 
	
	\bibitem[LU22]{LekiliUeda}
	Y.~Lekili and K.~Ueda. 
	\newblock Homological mirror symmetry for Milnor fibers via moduli of  $A_\infty$-structures.
	\newblock {\em J. Topol. (To appear),} (2022). 
	
	\bibitem[LP20]{LekiliPolishchuk}
	Y. Lekili and A. Polishchuk. 
	\newblock Derived equivalences of gentle algebras via Fukaya categories. 
	\newblock{\em Math. Annal.,} Vol. 376 (2020), 187–225.
	
	\bibitem[OO05]{OhtaOno}
	H. Ohta and K. Ono. 
	\newblock Simple singularities and symplectic fillings.
	\newblock {\em J. Differential Geom.} Vol. 69 (2005), 1-42. 
	
	\bibitem[Orl09]{Orlov09}
	D.~Orlov.
	\newblock Derived categories of coherent sheaves and triangulated
	categories of singularities.
	\newblock {\em  Algebra, arithmetic, and geometry: in
		honor of Yu. I. Manin. Vol. II,} Vol. 270 of Progr. Math., 503-531. Birkh\"auser Boston, Inc., Boston, MA (2009). 
	
	\bibitem[Sei00]{SeidelGraded}
	P.~Seidel. 
	\newblock Graded Lagrangian submanifolds. 
	\newblock {\em Bull. Soc. Math. France}, Vol. 128 (2000), 103-149.
	
	\bibitem[Sei08]{SeidelBook}
	P.~Seidel. 
	\newblock Fukaya categories and Picard--Lefschetz theory. 
	\newblock  Zurich Lectures in Advanced Mathematics. European
	Mathematical Society (EMS), Z\"urich (2008).
	
	\bibitem[Tak10]{Takahashi} A.~Takahashi.
	\newblock Weighted projective lines associated to regular systems of weights of dual type.
	\newblock {\em Adv. Stud. Pure Math.}, Vol. 59 (2010), 371-388.
	
	\bibitem[Ued06]{UedaSimple} K.~Ueda.
	\newblock Homological mirror symmetry and simple elliptic singularities. 
	\newblock {\em arXiv: math/0604361} (2006).
	
	\bibitem[PV11]{PolishVainStacks} 
	A.~Polishchuk and A. Vaintrob. 
	\newblock Matrix factorizations and singularity categories for stacks.  
	\newblock {\em Ann. Inst. Fourier (Grenoble)}, Vol. 61 (2011), 2609-2642.
	
	\bibitem[PV16]{PolishVainCFT} 
	A.~Polishchuk and A. Vaintrob. 
	\newblock Matrix factorizations and cohomological field theories. 
	\newblock {\em J. Reine Angew. Math.}, Vol. 714 (2016), 1-122.
	
	\bibitem[PV21]{PolishVarol}
	A.~Polishchuk and U.~Varolgunes.
	\newblock On homological mirror symmetry for chain type polynomials.
	\newblock {\em arXiv: 2105.03808} (2021).
	
	\bibitem[Rei03]{Reid}
	M.~Reid. 
	\newblock Surface cyclic quotient singularities	and Hirzebruch–Jung resolutions.
	\newblock {\em Lecture notes} (2003). Available at: \url{http://homepages.warwick.ac.uk/~masda/surf/more/cyclic.pdf}.
	
	\bibitem[Rit14]{Ritter}
	A. Ritter
	\newblock Floer theory for negative line bundles via Gromov--Witten invariants
	\newblock{\em Adv. Math.}, Vol. 262 (2014), 1035-1106.
	
	
\end{thebibliography}
\end{document}